\theoremstyle{plain}
\newtheorem{theorem}{Theorem}[section]
\newtheorem{problem}{Problem}[section]
\newtheorem{proposition}{Proposition}[section]
\newtheorem{lemma}{Lemma}[section]
\newtheorem{corollary}{Corollary}[section]
\theoremstyle{definition}
\newtheorem{definition}{Definition}[section]
\newtheorem{assumption}{Assumption}[section]
\theoremstyle{remark}
\newtheorem{remark}{Remark}[section]
\newcommand{\upE}{\mathbf{E}}
\newcommand{\upI}{\mathbf{I}}
\newcommand{\upT}{\mathbf{T}}
\newcommand{\calA}{\mathcal{A}}
\newcommand{\calE}{\mathcal{E}}
\newcommand{\calF}{\mathcal{F}}
\newcommand{\calI}{\mathcal{I}}
\newcommand{\calN}{\mathcal{N}}
\newcommand{\calO}{\mathcal{O}}
\newcommand{\calR}{\mathcal{R}}
\newcommand{\bbE}{\mathbb{E}}
\newcommand{\bbN}{\mathbb{N}}
\newcommand{\bbP}{\mathbb{P}}
\newcommand{\bbR}{\mathbb{R}}
\newcommand{\llangle}{\left\langle}
\newcommand{\rrangle}{\right\rangle}
\newcommand{\tF}{\mathrm{F}}
\newcommand{\ticalO}{\widetilde{\mathcal{O}}}
\newcommand{\ticalE}{\widetilde{\mathcal{E}}}
\newcommand{\tisg}{\widetilde{g}}
\newcommand{\Wt}{{W^{(t)}}}
\newcommand{\Gt}{{G^{(t)}}}
\newcommand{\etat}{{\eta^{(t)}}}
\newcommand{\alphat}{{\alpha^{(t)}}}
\newcommand{\Wtn}{{W^{(t+1)}}}
\newcommand{\halfk}{{\frac{k}{2}}}
\newcommand{\indicator}{{\mathds{1}}}
\newcommand{\cst}{{\mathsf{c}}}
\newcommand{\hatalpha}{\widehat{\alpha}}
\newcommand{\hatsW}{\widehat{W}}
\newcommand{\olinesg}{\overline{g}}
\newcommand{\olinesW}{\overline{W}}
\DeclareMathOperator*{\expan}{flat}
\DeclareMathOperator*{\tr}{tr}
\newcommand{\calII}{\mathcal{II}}
\newcommand{\calIII}{\mathcal{III}}
\newcommand{\err}{\text{err}}
\title{Near-Optimal Tensor PCA via Normalized Stochastic Gradient Ascent with Overparameterization}
\author{Shihong Ding$^{1}$
	\quad
	Yihong Gu$^{2}$
	\quad 
	Yuanshi Liu$^{1}$
	\quad Cong Fang$^{1\textsuperscript{†}}$\\
\\
        \small $^{1}$Peking University\quad $^{2}$Harvard Medical School
        \\\\
}
\date{}
\begin{document}
\maketitle

\renewcommand{\thefootnote}{\dag}
\footnotetext{Corresponding author.}

\begin{abstract}

We study the Order-$k$ ($k \geq 4$) spiked tensor model for the tensor principal component analysis (PCA) problem: given $N$ i.i.d. observations of a $k$-th order tensor generated from the model $\mathbf{T} = \lambda \cdot v_*^{\otimes k} + \mathbf{E}$, where $\lambda > 0$ is the signal-to-noise ratio (SNR), $v_*$ is a unit vector, and $\mathbf{E}$ is a random noise tensor, the goal is to recover the planted vector $v_*$. 

We propose a normalized stochastic gradient ascent (NSGA) method with overparameterization for solving the tensor PCA problem. Without any global (or spectral) initialization step, the proposed algorithm successfully recovers the signal $v_*$ when $N\lambda^2 \geq \widetilde{\Omega}(d^{\lceil k/2 \rceil})$, thereby breaking the previous conjecture that (stochastic) gradient methods require at least $\Omega(d^{k-1})$ samples for recovery. For even $k$, the $\widetilde{\Omega}(d^{k/2})$ threshold coincides with the optimal threshold under computational constraints, attained by sum-of-squares relaxations and related algorithms. Theoretical analysis demonstrates that the overparameterized stochastic gradient method not only establishes a significant initial optimization advantage during the early learning phase but also achieves strong generalization guarantees. This work provides the first evidence that overparameterization improves statistical performance relative to exact parameterization that is solved via standard continuous optimization.
\end{abstract}

\section{Introduction}
Tensor PCA aims to discover principled signal from high-dimensional data corrupted by strong random noise, making it a canonical “needle-in-a-haystack” problem and a particular form of high-dimensional denoising. The feasibility of signal recovery in strong-noise regimes is deeply connected to computational hardness, a relationship that has been extensively studied in multiple contexts. A classical example is the spiked tensor model, introduced by \citet{montanari2014statistical}, in which the observed data consists of an unknown rank-one tensor superimposed with a random noise tensor. This model provides a fundamental framework for understanding the trade-off between computational efficiency and statistical power in tensor PCA. Our work builds directly upon this spiked tensor model, which is formally defined as follows:
\begin{problem}\label{PCA-def}[Signal Recovery for Tensor PCA]
	Given $N$ i.i.d. observations $\{\upT^{(t)}=\lambda\cdot v_*^{\otimes k}+\upE^{(t)}\}_{t=1}^N$ where $v_*\in\bbR^d$ is an arbitrary unit vector, $\lambda\gtrsim 1$ is the signal-to-noise ratio (SNR), and $\upE$ $\left(\{\upE^{(t)}\}_{t=1}^N\sim\upE\right)$ is a random noise tensor with zero mean. The goal is to find a  unit vector $v$ such that $\left\|v-v_*\right\|^2\leq o(1)$.
\end{problem} 
For $k \geq 3$, this model exhibits the so-called statistical-to-computational gap. Consider a dataset containing $N$ tensor observations. 
In the regime where $N\lambda^2 \lesssim d$, recovery of the signal vector $v_*$ is information-theoretically impossible: no estimator can achieve $\ell_2$ error that satisfies $\|v-v_*\|^2\leq o(1)$. Within the regime where $d \lesssim N\lambda^2 \ll d^{k/2}$, it is information-theoretically possible to recover the signal vector $v_*$. Beyond such information-theoretical guarantee, significant efforts have been devoted to understanding whether algorithms with computational constraints can achieve the recovery. However, all existing polynomial-time algorithms fail to achieve non-trivial recovery within this regime. Based on current studies \citep{kunisky2019notes, brennan2021statistical, dudejia2020statistical, hopkins2017thepower, brennan2020statistical, zhang2017Tensor}, it is widely conjectured that no polynomial-time algorithm can achieve non-trivial recovery in this regime. In contrast, for $N\lambda^2 \gtrsim d^{k/2}$, efficient polynomial-time algorithms that achieve the desired recovery do exist. 

In the challenging regime where $d \lesssim N\lambda^2 \ll d^{k/2}$, the maximum-likelihood estimator under isotropic Gaussian noise
is able to recover the signal $v_*$, which is the global maximizer of the objective function $\widehat{f}(v) := \langle v^{\otimes k}, \upT \rangle$ over the unit sphere. Due to the non-convexity of $\widehat{f}(v)$, there currently exists no polynomial-time algorithm that can efficiently compute this estimator. 

The best known threshold for signal recovery $v_*$ in terms of the SNR and sample size $N\lambda^2$ is at most $d^{k/2}$. 
These methods generally follow one of two strategies: either strictly controlling the search process or region, or designing a well-constructed initialization that exploits structural properties of the observation tensor's principal components.
Specifically, the sum-of-squares (SoS) methods \citep{hopkins2015tensor} and its related spectral methods \citep{hopkins2016fast} achieve a tight threshold of $\widetilde{\Omega}(d^{k/2})$. 
Additionally, a Gaussian homotopy-based algorithm proposed in \citet{anandkumar2016homotopy} for $k=3$ also achieves a threshold of $\widetilde{\Omega}(d^{k/2})$. For a more detailed discussion of this threshold, we refer the reader to Section \ref{related_works}.

The above work investigates effective signal recovery for Tensor PCA under any polynomial-time algorithm. In recent years, gradient-based (or first-order) methods have emerged as the dominant solvers for modern large-scale problems, owing to their low per-iteration cost, scalability, and black-box nature—which facilitates straightforward implementation using automatic differentiation tools. Consequently, understanding efficient recovery under  the 
computational constrains that are only implemented by 
gradient-based methods  has attracted considerable research interest in recent years \citep{biroli2019how, arous2020algorithmic, arous2020online}. 

The studies on (stochastic) gradient methods for tensor PCA predominantly adopt update rules derived from the gradient of the maximum likelihood estimation (MLE) objective $\widehat{f}(v)$. In the upper bound part,
\citet{arous2020algorithmic} proved that gradient descent and Langevin dynamics can achieve efficient recovery of the signal when $N\lambda^2 \gtrsim d^{k-1}$. Under the same condition, \citet{arous2020online} demonstrated that online SGD also attains strong recovery guarantees. Complementarily, the algorithmic lower bound perspective provides two types of evidence to support the failure of (stochastic) gradient algorithms to recover $v_*$ in the regime $d^{k/2} \lesssim N\lambda^2 \ll d^{k-1}$. The one studies the difficulty of topological complexity of the objective landscape $\widehat{f}(v)$, manifested through the proliferation of spurious critical points near the maximum likelihood potential \citep{arous2017theland, ros2019complex}. The other suggests that the weakness of the signal in the region of maximal entropy for the uninformative prior constitutes the primary cause \citep{arous2020algorithmic}. In particular, \citet{arous2020algorithmic} identified a free energy well around the equator when the square of SNR falls below $\mathcal{O}(d^{k-1})$, from which computational hardness under Gibbs initialization can be derived. It is still open whether gradient-based methods are indeed less efficient than SoS and spectral algorithms, the latter of which achieve the $\widetilde{\Omega}(d^{k/2})$ recovery threshold. 
This motivates the following research question: 
\begin{center}
\it Can gradient-based methods recover $v_*$ in the regime $d^{k/2}\lesssim N\lambda^2\ll d^{k-1}$.
\end{center}

We propose two finite-horizon online normalized stochastic gradient algorithms to recover the signal vector $v_*$, breaking the prior conjecture that (stochastic) gradient methods require at least $\Omega(d^{k-1})$ threshold for recovery. For even-order tensors (i.e., when $k$ is even),  Algorithm \ref{SGA} first performs $T$ iterations of normalized stochastic gradient ascent with a shift term, producing an estimator $W^{(T)}$ of the rank-one matrix $v_*v_*^{\top}$. It then returns the leading eigenvector of the matrix $W^{(T)}+\left(W^{(T)}\right)^{\top}$. For odd-order tensors, Algorithm \ref{k-odd} runs two parallel instances of Algorithm \ref{SGA}. At each iteration $t$, each instance preprocesses the sampled tensor $\upT^{(t)}$ into an even-order tensor without prior information, and the final output is selected probabilistically from the two estimated vectors.

To the best of our knowledge, our algorithm is the first stochastic gradient method that——without any global (or spectral) initialization step——successfully recovers $v_*$ with constant probability when the SNR and sample size satisfy $N\lambda^2 \geq \widetilde{\Omega}(d^{\lceil k/2 \rceil})$. For even $k$, the threshold matches that of state-of-the-art methods \citep{montanari2014statistical,hopkins2015tensor,hopkins2016fast,hopkins2017thepower,anandkumar2016homotopy}. For odd $k$, this threshold requirement can be improved to $\widetilde{\Omega}(d^{k/2})$ by incorporating the partial trace vector as a preprocessed vector. This implies that, with a preprocessing procedure that incorporates global information, our algorithm achieves a near-optimal threshold $\widetilde{\Omega}(d^{k/2})$ of $N\lambda^2$. Furthermore, our algorithm achieves an estimation error of $\ticalO(d^{k/8 + 3/2}/(N\lambda^2))$ for even $k$ and $\ticalO(d^{k/8 + 19/8} / (N\lambda^2))$ for odd $k$. Compared to the $\ticalO(d^{k/4} / \sqrt{N\lambda^2})$ error of existing algorithms \citep{hopkins2015tensor,hopkins2016fast}, our algorithm establishes a \emph{state-of-the-art} convergence rate in tensor PCA.

\begin{figure}[t]
	\centering 
	\includegraphics[width=0.8\textwidth]{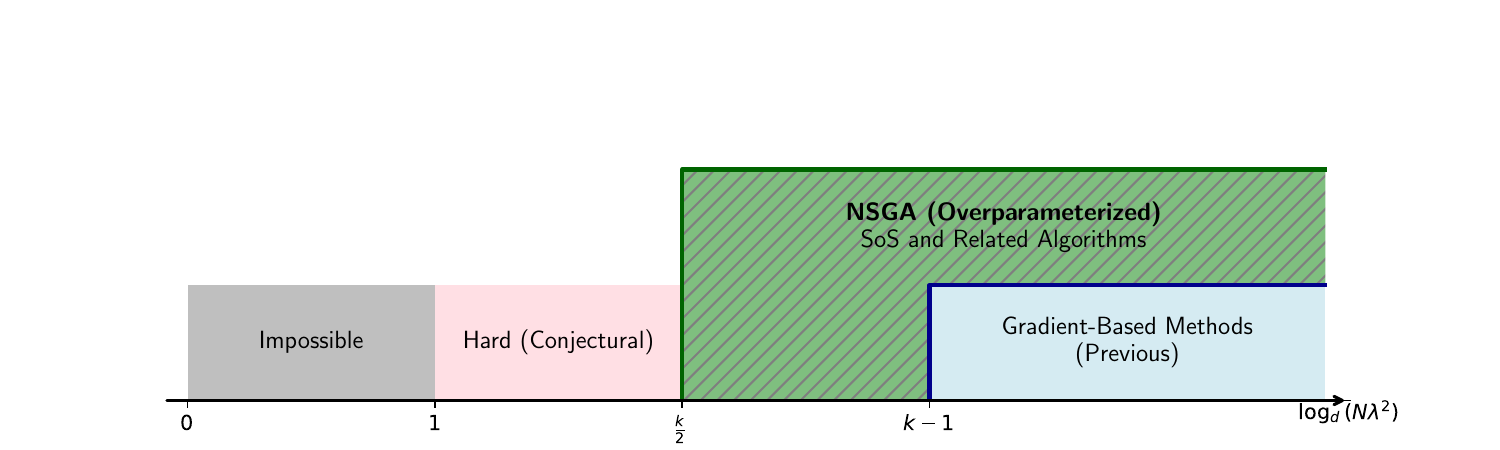} 
	\caption{ The performance of the algorithms for tensor PCA ($k$ even). Combining overparameterization, our algorithm elevates the recovery threshold of gradient-based methods from the light blue region to the green striped region. } 
	\label{fig1} 
\end{figure}

The performance improvement stems from two key insights: \emph{(1) Effective use of randomness}: By introducing a normalized factor, we can identify a suitable matrix-valued reference variable whose dynamics——in the population sense——converge to the rank-$1$ matrix $v_*v_*^{\top}$. Leveraging the sub-Gaussian property of the stochastic noise tensor, we can further control the discrepancy between the algorithmic-iteration dynamics and the population-level dynamics by excluding low-probability events that lead to harmful updates. 
\emph{(2) Overparameterization}: We use a matrix-valued parameterization combined with identity initialization. This overparameterized representation helps avoid trapping in free energy wells near initialization under the maximum likelihood energy landscape, thereby mitigating optimization difficulties. 

Our theoretical results demonstrate that overparameterized stochastic gradient methods not only establish a significant initial advantage during the early optimization phase but also achieve strong generalization guarantees——a finding that may inspire the design of overparameterized solvers in broader machine learning contexts. To the best of our knowledge, this work provides the \emph{first} evidence that the overparameterization  can enhance statistical performance  beyond  what is achievable by exact parameterization that is solved by commonly-used continuous optimization algorithms (see more discussion in section \ref{discussion}).
Moreover, the algorithmic framework may extend to other models with homogeneous structures, such as neural networks with homogeneous activation functions and general tensor decomposition models. With appropriately designed step sizes, the algorithm can also be adapted to infinite-horizon online learning settings.

\noindent\textbf{Our Contributions. } The contributions of this paper are as follows:
\begin{enumerate}
	\item[(1)]  We propose a new normalized stochastic gradient ascent algorithm with overparameterization for solving the tensor PCA problem, which successfully recovers the true signal vector $v_*$ without any global (or spectral) initialization step.
	\item[(2)] We provide a theoretical analysis demonstrating that the proposed algorithm achieves strong recovery guarantees with constant probability when 
	$N\lambda^2\geq\widetilde{\Omega}(d^{\lceil k/2\rceil})$, significantly improving over the previously conjectured threshold of $\widetilde{\Omega}(d^{k-1})$ for (stochastic) gradient methods \citep{arous2020algorithmic, arous2020online}. To the best of our knowledge, this is the first gradient-based method that attains non-trivial recovery at the critical threshold.
\end{enumerate}

\noindent\textbf{Notations. }We denote real vectors by lowercase letters (e.g., $u,v$) and real matrices by uppercase letters (e.g., $Q,W,X$). A vector in $\bbR^d$ is written as $x=\left(x_1,\cdots,x_d\right)$, and a matrix in $\bbR^{d\times d}$ as $W=\left(W_{ij}\right)_{d\times d}$. For $n\in\bbN_+$, $[n]$ represents the set $\{1,\cdots, n\}$. For functions $f,g:\bbR\rightarrow\bbR$, we write $f\lesssim g$ for $f=\calO(g)$, meaning there exists a constant $C$ such that $f\leq C\cdot g$; $f\gtrsim g$ for $f=\Omega(g)$, meaning that there exists a constant $C$ such that $f\geq C\cdot g$; $f\asymp g$ if $g\lesssim f\lesssim g$. We write $f=\ticalO(g)$ if
$f(n)\leq \mathrm{poly}\log(n)\cdot g(n)$, and $f=\widetilde{\Omega}(g)$ if $f\geq \mathrm{poly}\log(n)\cdot g(n)$.
The standard entrywise inner product is denoted $\langle\cdot,\cdot\rangle$. For vectors $u,v\in\bbR^d$, $\langle u,v\rangle=\sum_{i=1}^{d}u_iv_i$. For matrices $Q,W\in\bbR^{d\times d}$, $\llangle Q,W\rrangle=\tr\left(Q^{\top}W\right)$. The $\l_2$-norm of $v\in\bbR^d$ is given by $\|v\|$. The matrix norm used throughout the paper will be the Frobenius norm, denoted by $\left\|W\right\|_{\tF}$ for matrix $W\in\bbR^{d\times d}$.

Tensors of order-$k\geq 4$ are denoted by boldface uppercase letters (e.g., $\upT,\upE\in\otimes^k\bbR^d$). We denote by $\upT(\cdot):\bbR^d\rightarrow\otimes^{k-1}\bbR^d$ the multilinear function such that $\upT(x)=\left(\sum_{i_1=1}^dx_{i_1}\upT_{i_1,i_2,\cdots,i_k}\right)$, applying $x$ to the first modes of the tensor $\upT$. Tensors can be flattened into vectors via the operator $\expan:\otimes^k\bbR^d\rightarrow\bbR^{d^k}$, which reinterprets tensor indices into a lexicographic ordering and has the following form $\expan(\upT)_{(i_1-1)d^{k-1}+(i_2-1)d^{k-2}+\cdots+(i_{k-1}-1)d+i_k}:=\upT_{i_1,\cdots,i_k}$ for any $i_1,\cdots,i_k\in[d]$ under given tensor $\upT\in\otimes^k\bbR^d$. The inner product for tensors is defined as $\langle\upT_1,\upT_2\rangle:=\llangle\expan(\upT_1),\expan(\upT_2)\rrangle$. The $k$-fold outer product of vector $v\in\bbR^d$ is $v^{\otimes k}$.

\section{Related Works}\label{related_works}
\noindent\textbf{Tensor PCA Estimators' Performance: } A lot of work has established that a broad class of algorithms fails to solve the tensor PCA problem within the computationally hard regime ($d \lesssim N\lambda^2 \ll d^{k/2}$). These include SoS relaxations \citep{hopkins2015tensor,hopkins2017thepower,hopkins2018statisticalonfer}, low-degree polynomial estimators \citep{brennan2021statis,kunisky2019notes}, statistical query (SQ) algorithms \citep{dudejia2020statistical,brennan2021statis}, and run-time of memory bounded algorithms \citep{dudeja2024stat}. Furthermore, it has been shown that even Langevin dynamics applied to the maximum likelihood objective cannot efficiently solve tensor PCA within the conjecturally hard regime \citep{arous2020algorithmic}. 
Finally, via average-case reduction, the computational hardness of the hypergraph planted clique problem implies hardness of tensor PCA \citep{brennam2020reduci,zhang2018tensorSVD}.

When the sample size $N$ and the SNR $\lambda$ satisfy $N\lambda^2\geq \widetilde{\Omega}(d^{k/2})$, a variety of methods have been developed for solving the tensor PCA problem. These include SoS relaxations \citep{hopkins2015tensor,hopkins2017thepower}, spectral methods \citep{montanari2014statistical,hopkins2016fast,biroli2019how,zheng2015interpolating}, tensor power methods with global initialization \citep{anandkumar2016homotopy,biroli2019how}, and higher-order generalizations of belief propagation \citep{wein2019thekikuchi}. The best achievable estimation error for these algorithms is $\ticalO(d^{k/4}/\sqrt{N\lambda^2})$.

With theoretical guarantees, gradient-based methods are also effective in recovering the true signal $v_*$ under the regime $N\lambda^2\gtrsim d^{k-1}$ \citep{arous2020algorithmic,arous2020online,montanari2014statistical,huang2022power,wu2024sharp}. Several existing studies have established non-asymptotic, discrete-time convergence guarantees for projected gradient ascent from the perspective of Power Iteration \citep{montanari2014statistical,huang2022power,wu2024sharp}. At the same time, convergence guarantees for (stochastic) gradient ascent are primarily established either in continuous time (via Langevin dynamics) or in an asymptotic sense \citep{arous2020algorithmic,arous2020online}. Moreover, under the condition $N\lambda^2\gtrsim d^{k/2}$, several heuristic gradient algorithms have been empirically shown to perform well under suitable settings \citep{biroli2019how}. Additionally, \citet{arous2024stogra} investigated the high-dimensional dynamics of online stochastic gradient descent with natural random initialization in the multi-spiked tensor model.


\noindent\textbf{Smoothing Methods: }Smoothing methods play a significant role in both theoretical analysis and algorithm design. On the theoretical analysis, \citet{spielman2004smoothed} pioneered the use of smoothed analysis, demonstrating that the shadow-vertex simplex algorithm has polynomial smoothed complexity, thereby providing a theoretical explanation for its efficiency in practice. Building upon the notion of smoothed complexity and the analytical framework introduced by \citet{spielman2004smoothed}, \citet{Arthur2011smoothedkmeans} established that the k-means algorithm admits polynomial smoothed complexity. \citet{chandrasek2024smoothedlearning} proposed a smoothed agnostic learning model for concepts with low intrinsic dimension. In a complementary line of work,  \citet{Bhojanapalli2018smoothed} showed that, under mild conditions, an approximate second-order stationary point is sufficient to guarantee approximate global optimality. 

In algorithm design, smoothing methods also play an essential role. \citet{jin2017howto,jin2018accelerated} introduced small random perturbations during the iterative process to effectively avoid saddle points, emulating the effect of computing gradients after applying localized smoothing to the objective function. \citet{damian2023smoothing} applied stochastic gradient descent (SGD) to a smoothed loss function, 
improving the sample complexity of SGD for single-index models and closing the theoretical gap between gradient-based methods and the correlational SQ lower bound. 

The theoretical analysis in this paper employs a technique of excluding low-probability events to prevent undesirable updates during the iteration process. This approach ensures that the principal component of the observed tensor remains dominant throughout optimization, thereby establishing the convergence results with high probability. Although this method diverges fundamentally from smoothed analysis, both share the common aim of mitigating the impact of worst-case, low-probability outliers on algorithmic convergence. Compared to the smoothing method that requires computing gradients of a smoothed loss function \citep{damian2023smoothing}, the proposed algorithm avoids the complex gradient computations: each iteration relies solely on stochastic gradient information. 

\noindent\textbf{Overparameterized Methods: }In modern machine learning research, overparameterization--the practice of using more parameters than traditionally statistically necessary--is widely employed to improve model training. Although classical statistical theory suggests that overparameterization leads to overfitting, such models often exhibit remarkable generalization performance in practice \citep{hardt2016train,zhang2017understanding}. Overparameterization has been studied across various model classes, including linear models \citep{woodworth2020kernel,haochen2021shape,vaskev2019implicit,ding2025scaling}, matrix factorization models \citep{li2018algorithmic,xiong2023how,jin2023understanding}, and neural networks \citep{zhang2017understanding,Belkin2018reconciling,Li2018learning,Jacot2018Neural,Kaplan2022scaling}.

Existing methods typically achieve overparameterization by increasing the number of parameters without altering their structure. Examples include decomposing each dimension of a linear model into positive and negative parts to reformulate regression via quadratic parameterization \citep{woodworth2020kernel,haochen2021shape,vaskev2019implicit,ding2025scaling}, or using high-rank factorization to reformulate low-rank matrix factorization problems \citep{li2018algorithmic,xiong2023how,jin2023understanding}. To the best of our knowledge, no theoretical evidence currently demonstrates that overparameterization provides a significant statistical advantage. For example, in the case of normal data in the regression problem (or data satisfying the restricted isometry property in the matrix sensing problem), sparse (or low-rank) signal recovery can be achieved using quadratically parameterized models \citep{haochen2021shape, li2018algorithmic} without any explicit regularizer. However, the same recovery guarantees can also be attained through an exact parameterization combined with an $\ell_1$ or nuclear-norm regularizer, which can also be efficiently solved via proximal gradient methods.

In contrast to these approaches, this work investigates overparameterization by modifying the parameterization structure. Specifically, we replace vector parameters with matrix parameters and analyze the convergence behavior of stochastic gradient method under this reformulation. Moreover, through theoretical analysis, we demonstrate that this matrix overparameterization approach effectively prevents the 
MLE objective from being trapped in free energy well during early training stages. 
It provides the first theoretical guarantee of a statistical advantage in overparameterized models.

\section{Problem Formulation}\label{prelimin}
\subsection{Setup and Assumptions}

Suppose we observe i.i.d. observations $\{\mathbf{T}^{(t)}\}_{t=1}^N \sim \bf{T}$, where $\bf{T}$ satisfies
\begin{align}
    \mathbf{T} = \lambda \cdot v_*^{\otimes k} + \mathbf{E}
\end{align} where $\lambda \in \mathbb{R}^+$ represents the known SNR.
$v_* \in \mathbb{R}^d$ with $\|v_*\|_2 = 1$ is the unknown, $\mathbf{E}$ is the random noise tensor. The goal is to estimate $v_*$ based on $N$ i.i.d. observations \citep{montanari2014statistical}.

We first summarize the above data-generating process as a condition and impose some regularity conditions that are widely adopted in the literature.

\begin{assumption}\label{ass-2}
\item[\textbf{[A$_\text{1}$]}] At each iteration step $t$, our algorithm samples a new observation tensor $\upT^{(t)}$ from the data stream. Tensors sampled across different iterations are mutually independent.

\item[\textbf{[A$_\text{2}$]}] There exists $\sigma > 0$ such that the following sub-Gaussian tail bound holds,
\begin{align}
	\bbE\left[\exp\left\{\llangle u, \expan(\upE)\rrangle\right\}\right]&\leq\exp\left\{\sigma^2 \|u\|_2^2 \right\},\quad \forall u\in\bbR^{d^k}.\notag
\end{align} 

\item[\textbf{[A$_\text{3}$]}] The planted vector dimension $d$ and tensor order $k$ satisfy $d\geq k$. The SNR $\lambda$ scales as $\Omega(1)\leq\lambda\leq \mathcal{O}(d^{k/4})$, and the sub-Gaussian parameter $\sigma$ satisfies $\sigma\geq\Omega(1)$.
\end{assumption}
\textbf{[A$_\text{2}$]} implies that the vectorization $\expan(\upE)$ of the zero-mean random noise tensor $\upE$ possesses a \textit{rotationally invariant sub-Gaussian property}. Specifically, for any orthogonal matrix $Q \in \bbR^{d^k\times d^k}$, each coordinate of $Q \expan(\upE)$ is sub-Gaussian with parameter $\sigma$ (Definition \ref{sub-Gaussian}). While standard tensor PCA literature \citep{hopkins2015tensor,hopkins2016fast,arous2020algorithmic,dudeja2024stat} assumes $\upE$ has i.i.d. $\calN(0,\sigma^2)$ entries (which constitutes a special case satisfying Assumption \ref{ass-2}), our framework accommodates broader noise models. The assumption holds in particular when $\expan(\upE)$ satisfies either of the following sufficient conditions:
(1) Its coordinates are mutually independent sub-Gaussian random variables with parameter $\sigma$;
(2) The uniform bound $\left\|\expan(\upE)\right\|\leq\sigma$ holds. Furthermore, \textbf{[A$_\text{3}$]} represents a mild condition. The SNR $\lambda$ obtained from a single observation tensor typically resides in the constant regime ($\lambda\asymp1$). Significantly exceeding the constant SNR level (i.e., attaining $\lambda\gg\Omega(1)$) generally necessitates averaging multiple ($\lambda^2$) independently sampled observation tensors \citep{dudeja2024stat}. For SNR regimes exceeding $\calO(d^{k/4})$, we note that efficient recovery of $v_*$ is already addressed by existing offline algorithms \citep{montanari2014statistical,hopkins2015tensor,hopkins2016fast}. Consequently, these high-SNR cases fall outside the primary scope of our analysis.

We can derive a sharper convergence rate under the isotropic noise.
\begin{assumption}\label{assumption-estimation-second-moment}
Recall the sub-Gaussian parameter $\sigma$ in Assumption \ref{ass-2}. There exists a constant $\mathsf{c}_0$ such that
\begin{align*}
    \mathbb{E}\left[\llangle u, \mathrm{flat}(\mathbf{E})\rrangle^2\right] = \mathsf{c}_0 \sigma^2,
\end{align*}
for any unit vector $u\in\bbR^{d^k}$
\end{assumption}
Assumption \ref{assumption-estimation-second-moment} also holds in the i.i.d. gaussian entries setting \citep{hopkins2015tensor,hopkins2016fast,arous2020algorithmic,dudeja2024stat}. However, this assumption does not compromise the convergence guarantees of Algorithm \ref{SGA} and Algorithm $\ref{k-odd}$. This is because, for any unit vector 
$u\in\bbR^{d^k}$, Assumption \ref{ass-2} guarantees a uniform bound on the second-order moment for $\llangle u,\expan(\upE)\rrangle$, as established in Corollary \ref{corollary-estimation-second-moment}. In Section \ref{tensor}, we further establish the convergence rates for both algorithms even in the absence of Assumption \ref{assumption-estimation-second-moment}.
\begin{corollary}\label{corollary-estimation-second-moment}
	Suppose Assumption \ref{ass-2} holds. For any unit vector $u\in\bbR^{d^k}$, the following second-order moment condition holds for $\llangle u,\expan(\upE)\rrangle$
	\begin{align}
		\bbE\left[\llangle u,\expan(\upE)\rrangle^2\right]\leq4\sigma^2.\notag
	\end{align}
\end{corollary}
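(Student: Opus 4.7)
The statement is a routine moment bound for sub-Gaussian random variables; the only thing to verify is that the hypothesis in \textbf{[A$_2$]} can be invoked with scalar multiples of $u$. Fix a unit vector $u \in \bbR^{d^k}$ and set $X := \llangle u, \expan(\upE)\rrangle$. Because \textbf{[A$_2$]} is stated for \emph{all} $u \in \bbR^{d^k}$, it applies in particular to $tu$ for every $t \in \bbR$, yielding
\begin{equation*}
\bbE\!\left[\exp(tX)\right] \;=\; \bbE\!\left[\exp\!\left(\llangle tu, \expan(\upE)\rrangle\right)\right] \;\leq\; \exp\!\left(\sigma^2 t^2 \|u\|^2\right) \;=\; \exp(\sigma^2 t^2).
\end{equation*}
Thus $X$ is a centered (by the zero-mean assumption on $\upE$) sub-Gaussian random variable with parameter proportional to $\sigma$.

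From here the plan is to convert the exponential moment bound into a second-moment bound. The cleanest route I would take is the symmetrization/cosh trick: adding the MGF inequality at $t$ and $-t$ gives
\begin{equation*}
2\bbE[\cosh(tX)] \;=\; \bbE\!\left[e^{tX}\right] + \bbE\!\left[e^{-tX}\right] \;\leq\; 2\exp(\sigma^2 t^2),
\end{equation*}
and then using the elementary inequality $\cosh(y) \geq 1 + y^2/2$ one obtains $1 + \tfrac{t^2}{2}\bbE[X^2] \leq \exp(\sigma^2 t^2)$. Dividing by $t^2$ and letting $t \to 0^+$ yields $\bbE[X^2] \leq 2\sigma^2$, which is in fact sharper than the stated $4\sigma^2$ and so suffices. (An equally valid alternative is to derive the Chernoff tail bound $\bbP(|X|\geq s) \leq 2\exp(-s^2/(4\sigma^2))$ from the MGF bound and integrate $\bbE[X^2] = \int_0^\infty \bbP(X^2 \geq t)\,dt$; this yields the constant $8\sigma^2$, which still implies the claim up to a looser constant but the paper's stated bound of $4\sigma^2$ is most cleanly recovered via the cosh argument.)

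There is essentially no obstacle: the only subtle point is that the MGF assumption is stated without a free scalar parameter, so one must recognize that the quantification over all $u$ already implicitly gives the scaling $u \mapsto tu$, and that the centeredness of $\upE$ (used to drop the linear term) is part of the setup in Problem \ref{PCA-def}. Everything else is a textbook computation and does not depend on any structural property of $\upE$ beyond \textbf{[A$_2$]}.
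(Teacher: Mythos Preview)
Your proposal is correct. The paper takes your ``alternative'' route rather than your primary one: it invokes the Chernoff tail bound (their Proposition \ref{prop-A5}) to get $\bbP(|X|\geq r)\leq 2e^{-r^2/(2\sigma^2)}$ and then integrates $\bbE[X^2]=2\int_0^\infty r\,\bbP(|X|>r)\,dr$ to arrive at $4\sigma^2$. Your cosh-trick approach is more direct---it bypasses the tail bound entirely and reads off the second moment from the small-$t$ behavior of the MGF---and it yields the sharper constant $2\sigma^2$. Incidentally, your observation about constants is on target: the MGF bound in \textbf{[A$_2$]} is $\exp(\sigma^2 t^2)$ rather than $\exp(\sigma^2 t^2/2)$, so the Chernoff bound it actually implies is $2e^{-r^2/(4\sigma^2)}$, and a careful tail integration would give $8\sigma^2$; the paper's computation appears to use the tighter tail $2e^{-r^2/(2\sigma^2)}$ without justification. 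Your cosh argument avoids this issue and cleanly delivers a bound well under the stated $4\sigma^2$.
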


\subsection{Our Method}\label{alg}

\begin{algorithm}[t]
	\caption{Normalized Stochastic Gradient Ascent (NSGA) }
	\label{SGA}
	\hspace*{0.02in}{\bf Input: }Initial weight $W^{(0)}=I_d\in\bbR^{d\times d}$, initial step-size $\eta_0$, total sample size $N$, decaying phase length $T_1=\left\lfloor N/\log(N)\right\rfloor$.
	\\
	\hspace*{0.02in}{\bf Output: }$\hat{v}\in\bbR^d$. 
	\begin{algorithmic}[1] 
		\WHILE{$t\leq N$}
		\IF{$t>0$ and $t$ mod $T_1=0$}
		\STATE $\eta_0\leftarrow\eta_0/2$.
		\ENDIF
		\STATE Sample a fresh data $\upT^{(t+1)}$.
		\STATE 
		$$
		W^{(t+1)}\leftarrow \left(1\underbrace{-\frac{\eta_0(k-4)}{2\left\|W^{(t)}\right\|_{\tF}^{k/2}}\widehat{\mathsf{R}}^{(t+1)}\left(W^{(t)}\right)}_{\mathcal{A}}\right)W^{(t)}+\underbrace{\frac{\eta_0}{\left\|W^{(t)}\right\|_{\tF}^{k/2-2}}}_{\mathcal{B}}\nabla_{W}\widehat{\mathsf{R}}^{(t+1)}\left(W^{(t)}\right).
		$$
		\ENDWHILE
		\STATE Let $\hat{v}$ be the top eigenvector of the matrix $W^{(N)} + (W^{(N)})^\top$.  
	\end{algorithmic}
\end{algorithm}
To recover the signal vector $v_*$, we leverage the structure of Problem \ref{PCA-def} to design a specialized reward function, and propose an online algorithm with NSGA updates on an over-parameterized solution matrix $W\in \mathbb{R}^{d\times d}$. At each time-step $t$, the algorithm samples a new observation tensor $\upT^{(t)}$ from the data stream and updates $W$ using its normalized stochastic gradient.

We first present the algorithm when $k$ is even. At each time-step $t \in [N]$, a new observation tensor $\upT^{(t)}$
is sampled from the data stream. The parameter matrix $W$
is then updated via a 
gradient-based algorithm for the following reward function
\begin{align}
\label{eq:loss-even-empirical}
    \widehat{\mathsf{R}}^{(t)}=\widehat{\mathsf{R}}^{(t)}_{\mathrm{even}}(W):=\left\langle W^{\otimes \frac{k}{2}}, \mathbf{T}^{(t)} \right\rangle.
\end{align}
The reward function $\widehat{\mathsf{R}}_{\mathrm{even}}^{(t)}(W)$ is a natural generalization of the MLE objective $\widehat{f}^{(t)}(v):=\llangle v^{\otimes k},\upT^{(t)}\rrangle$ from the vector parameterization to the matrix parameterization. Existing studies demonstrate that overparameterized stochastic gradient methods efficiently approximate optimal solutions, as seen in quadratically parameterized models \citep{woodworth2020kernel,haochen2021shape,ding2025scaling} and matrix factorization models \citep{li2018algorithmic,xiong2023how,jin2023understanding}. Inspired by these approaches, we use $W\in\bbR^{d\times d}$ to approximate $v_*v_*^{\top}$. Since $v_*v_*^{\top}$ has a unit Frobenius norm, we introduce a normalization factor $1/\left\|W\right\|_{\tF}^{k/2-2}$ (term $\mathcal{B}$ in Algorithm \ref{SGA}) to the gradient update term of the reward function during optimization. Additionally, we incorporate a small perturbation with respect to $W^{(t)}$ opposing the direction of the reward function (term $\mathcal{A}$ in Algorithm \ref{SGA}) when obtaining $W^{(t+1)}$.
In fact, the update of our algorithm can also be regarded as  one step of stochastic gradient ascent applied to the normalized reward function  $\widehat{\mathsf{R}}^{(t)}_{\mathrm{even}}(W)/\|W\|_{\tF}^{k/2-2}$.


The algorithm adopts the geometric decay strategy \citep{wu2022last} in the schedule of step-size: it remains constant for the first $T_1=\lfloor N/\log(N)\rfloor$ iterations, then halves every $T_1$ steps thereafter. Combining warm-up initialization with learning rate decay is prevalent in deep learning optimization \citep{goyal2017acc}. Geometric decay strategies are empirically superior to polynomial decay within the decay stage, as they effectively balance aggressive early learning with stable late-stage refinement \citep{ge2019thestep}. Motivated by these advantages, our step-size strategy integrates an initial constant phase with subsequent geometric decay. The full algorithm for even-order ($k$ even) tensors is presented in Algorithm \ref{SGA}. 

For the odd $k$, a preprocessing step is applied to the tensor $\upT$ obtained at each sampling iteration. Given a preprocessed unit vector $u\in\bbR^d$, 
we construct a new preprocessed tensor $\upT(u)=\left(\sum_{i_1=1}^du_{i_1}\upT_{i_1,i_2,\cdots,i_k}\right)\in\otimes^{k-1}\bbR^d$. Depending on the method used to generate $u$, both the critical threshold of $N\lambda^2$ required for signal recovery in Algorithm \ref{k-odd} and its resulting algorithmic classification may vary. In Corollary \ref{prob-odd}, $u$ is obtained by uniform sampling on the unit sphere, resulting in an $\widetilde{\Omega}(d^{\lceil k/2 \rceil})$ threshold. Since $u$ in this case contains no global structural information, Algorithm \ref{k-odd} remains a local optimization method. In Remark \ref{remark-odd}, $u$ is constructed via partial trace computation, leading to an improved $\widetilde{\Omega}(d^{k/2})$ threshold. Here, according to the global information captured by $u$, Algorithm \ref{k-odd} qualifies as a global optimization method. At each time-step $t\in[N]$, a new observation $\upT^{(t)}$ is sampled from the data stream. We consider the associated reward (loss) function:
\begin{align}
	\widehat{\mathsf{R}}^{(t)}_{\mathrm{odd}}(W):=\llangle W^{\otimes\frac{k-1}{2}},\upT^{(t)}(u)\rrangle.
\end{align}
One can notice that the tensor $\upT^{(t)}(u)$ resulting from this preprocessing is an even-order tensor exhibiting a structure analogous to the observed tensor in Problem \ref{PCA-def}--specifically, it comprises a principal component and a random noise matrix. However, the update at time-step $t$ requires concurrent execution of both normalized gradient ascent of $\widehat{\mathsf{R}}^{(t)}_{\mathrm{odd}}$ and $-\widehat{\mathsf{R}}^{(t)}_{\mathrm{odd}}$ due to sign ambiguity in the SNR $\lambda\langle v_*,u\rangle$ of the principal component for $\upT^{(t)}(u)$. Pseudocode for optimizing the weight matrix $W$ when $k$ is odd is provided in Algorithm \ref{k-odd}.
\begin{algorithm}[t]
	\caption{Bi-Threaded  NSGA}
	\label{k-odd}
	\hspace*{0.02in}{\bf Input: }Initial weight $W^{(0)}=I_d\in\bbR^{d\times d}$, initial step-size $\eta_0$, total sample size $N$, decaying phase length $T_1=\left\lfloor N/\log(N)\right\rfloor$,  preprocessed unit vector $u\in\bbR^d$.
	\\
	\hspace*{0.02in}{\bf Output: }$\hat{v}\in\bbR^d$.
	\begin{algorithmic}[1]
		\STATE Execute two parallel instances of the NSGA (Algorithm \ref{SGA}): the first satisfies the update rule as
		$$
		W^{(t+1)}\leftarrow \left(1-\frac{\eta_0(k-5)}{2\left\|W^{(t)}\right\|_{\tF}^{(k-1)/2}}\widehat{\mathsf{R}}^{(t+1)}\left(W^{(t)}\right)\right)W^{(t)}+\frac{\eta_0}{\left\|W^{(t)}\right\|_{\tF}^{(k-1)/2-2}}\nabla_{W}\widehat{\mathsf{R}}^{(t+1)}\left(W^{(t)}\right)
		$$
		with $\widehat{\mathsf{R}}^{(t+1)}=\widehat{\mathsf{R}}^{(t+1)}_{\mathrm{odd}}$ at each time-step $t$, while the second has the same update rule with  $\widehat{\mathsf{R}}^{(t+1)}=-\widehat{\mathsf{R}}^{(t+1)}_{\mathrm{odd}}$ at each time-step $t$.
		\STATE  The first instance yields an output value denoted as $\hat{v}^{(1)}$, while the second instance yields an output value denoted as $\hat{v}^{(2)}$. Randomly pick up $\hat{v}^{(l)}$ from $l\in\{1,2\}$ following the probability $\bbP[l]=0.5$ as $\hat{v}$.
	\end{algorithmic}
\end{algorithm}

\section{Main Result}\label{tensor}

\begin{theorem}\label{main-theorem}
    Consider the tensor PCA problem \ref{PCA-def} with even order $k\geq 4$, and suppose Assumptions \ref{ass-2} and \ref{assumption-estimation-second-moment} hold. For any failing probability $\delta \in (0,1)$, if the sample size $N$ satisfies
    \begin{align}
    \label{eq:cond-n-even-k}
    N\gtrsim\left(\max\left\{\frac{\log(N)}{d^{\frac{k}{4}-1}},\log(d)\right\}\right)^2\cdot\frac{\sigma k\log^{\frac{7}{2}}\left(\frac{\sigma kNd}{\delta}\right)}{\lambda^2}\cdot d^{\frac{k}{2}},
    \end{align} 
    then by picking the initial step-size $\eta_0$ as 
    \begin{align}\label{eq:cond-eta-even-k}
        \eta_0\asymp\max\left\{\frac{\log(N)}{k},\frac{kd^{\frac{k}{4}-1}}{\max\left\{k(k-4),\log^{-1}(d)\right\}}\right\}\cdot\frac{\lceil\log(N)\rceil}{\lambda N},
    \end{align} 
    Algorithm \ref{SGA} can return $\hat{v}$ satisfying
    \begin{align}
		&\min\left\{\left\|\hat{v}-v_*\right\|^2,\left\|\hat{v}+v_*\right\|^2\right\}\lesssim\left(1-\frac{\eta_0\lambda k}{2e}\right)^{\frac{\lfloor N/\log(N)\rfloor}{2}}\frac{1}{k\delta^{\frac{1}{2}}}\notag
        \\
        &\qquad+\frac{\left(\max\left\{\frac{\log(N)}{d^{\frac{k}{4}-1}},\log(d)\right\}\right)^{\frac{1}{2}}\lceil\log(N)\rceil\left(\cst_0\sigma^2+\lambda^2k+\sigma \log\left(\frac{\sigma kNd}{\delta}\right)d^2\right)}{\lambda^2k\delta^{\frac{1}{2}}}\cdot\frac{d^{\frac{k}{8}-\frac{1}{2}}}{N},\notag
	\end{align}
	with probability at least $1-\delta$.
\end{theorem}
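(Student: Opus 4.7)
The plan is to track along the iterates the scalar alignment $\alpha^{(t)} := v_*^{\top} W^{(t)} v_* = \llangle W^{(t)}, v_* v_*^{\top} \rrangle$ together with the Frobenius norm $\|W^{(t)}\|_\tF$, and to show that $\alpha^{(t)}/\|W^{(t)}\|_\tF \to 1-o(1)$ with high probability, after which a Davis--Kahan style argument converts this into the stated bound on $\min_{\pm}\|\hat v \pm v_*\|^2$. The structural advantage unlocked by the matrix overparameterization with identity initialization is visible already at step $0$: $\alpha^{(0)} = \tr(v_* v_*^{\top}) = 1$ deterministically, so the normalized alignment $\alpha^{(0)}/\|W^{(0)}\|_\tF = 1/\sqrt{d}$ sits exactly at the scale needed to ignite geometric growth at the claimed $\widetilde\Omega(d^{k/2})$ threshold, bypassing the exponentially small initial overlap that drives the $d^{k-1}$ barrier of the vector-parameterized MLE.

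For the population recursion, I would view the full Algorithm \ref{SGA} update as one step of ascent on the scale-invariant surrogate $\widehat{\mathsf{R}}^{(t)}(W)/\|W\|_\tF^{k/2-2}$ and derive the noiseless recursion on the pair $(\alpha^{(t)},\|W^{(t)}\|_\tF)$. With the shift $\mathcal{A}$ calibrated to keep $\|W^{(t)}\|_\tF$ close to $\sqrt d$ throughout the constant step-size phase of length $T_1=\lfloor N/\log N\rfloor$, the signal gradient $\lambda(k/2)(\alpha^{(t)})^{k/2-1}v_*v_*^{\top}/\|W\|_\tF^{k/2-2}$ produces a multiplicative lift $\alpha^{(t+1)} \gtrsim \alpha^{(t)}(1+c\,\eta_0\lambda k)$. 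After $T_1$ iterations the normalized alignment reaches a constant, and in the subsequent geometric decay phase the halving step size drives the alignment to $1-o(1)$ while shrinking the additive noise contribution, producing the exponential factor $(1-\eta_0\lambda k/(2e))^{\lfloor N/\log N\rfloor/2}$ in the stated bound.

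For the noise control I would decompose the stochastic gradient as the population term plus a martingale increment $M^{(t+1)}(W^{(t)})$, each entry of which is a contraction of $\upE^{(t+1)}$ against $(W^{(t)})^{\otimes k/2-1}$ and $e_i \otimes e_j$; by [A$_2$] this contraction is conditionally sub-Gaussian with parameter $\sigma\|W^{(t)}\|_\tF^{k/2-1}$ given the past (using independence across iterations from [A$_1$]), and an $\varepsilon$-net over the $d\times d$ probe sphere yields $\|M^{(t+1)}\|_\tF \lesssim \sigma\|W^{(t)}\|_\tF^{k/2-1}\sqrt{d\log(\sigma kNd/\delta)}$ on a union-bounded good event. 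I would then split $M^{(t+1)}$ into its projection along $v_*v_*^{\top}$ (which perturbs $\alpha^{(t)}$ additively and, by a Freedman-type martingale concentration across $t$, saves a factor of $\sqrt{N}$ and ultimately produces the $d^{k/8-1/2}/N$ statistical rate) and its orthogonal projection (which only inflates $\|W^{(t)}\|_\tF$ and is controlled under the sample-size condition of the theorem); Assumption \ref{assumption-estimation-second-moment} then sharpens the constant $\cst_0\sigma^2$ appearing in the final bound.

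The main obstacle is uniform-in-$t$ control of the noise step under the adaptivity of $\{W^{(t)}\}$. Substituting the a priori worst-case bound for $\|W^{(t)}\|_\tF$ would lose dimension factors that push the recovery threshold past $d^{k/2}$, so the analysis must define a self-referential good event in which the noise bound at step $t+1$ is expressed in terms of the random $\|W^{(t)}\|_\tF$ actually realized, and then maintain this jointly with a lower bound on $\alpha^{(t)}$ by induction on $t$. This is precisely the ``excluding low-probability harmful updates'' strategy the paper highlights, and its success hinges on the identity initialization: only when $\alpha^{(0)}$ is a deterministic constant can the induction base case withstand the worst-case noise at step $1$, which is exactly why overparameterization---not merely a clever step-size schedule---is what breaks the $d^{k-1}$ barrier.
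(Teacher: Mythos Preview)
Your proposal captures the right skeleton, but there is a genuine gap in the Phase~1 dynamics. You claim the signal term yields a multiplicative lift $\alpha^{(t+1)} \gtrsim \alpha^{(t)}(1 + c\eta_0\lambda k)$; this holds only when $k=4$. For $k>4$ the population recursion on the normalized alignment $a^{(t)}:=v_*^\top W^{(t)} v_*/\|W^{(t)}\|_{\tF}$ reads $a^{(t+1)}-a^{(t)}\approx \tfrac{1}{2}\eta_0\lambda k\,[a^{(t)}]^{k/2-1}(1-[a^{(t)}]^2)$, so near the start the lift factor is $1+c\eta_0\lambda k\,[a^{(t)}]^{k/2-2} = 1+c\eta_0\lambda k\,d^{-(k-4)/4}$. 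With the stated $\eta_0\asymp d^{k/4-1}\lceil\log N\rceil/(\lambda N)$ this is only $1+O(k\log N/N)$, and a fixed geometric rate of that size over $T_1=\lfloor N/\log N\rfloor$ steps amplifies $a^{(0)}=d^{-1/2}$ by at most a constant factor $e^{O(k)}$, nowhere near $1$. The recursion is a discretized \emph{superlinear} ODE whose solution blows up in finite time, and you must exploit that blow-up rather than a constant rate. The paper does this by tracking $[a^{(t)}]^{-(k/2-2)}$ for $k>4$, which obeys an \emph{additive} recursion with decrement $\Theta(\eta_0\lambda k(k-4))$ per step---this transformation is what drains the initial value $d^{(k-4)/4}$ linearly under the given budget (Lemmas~\ref{PCA-phase-I-upper-bound}--\ref{PCA-phase-I-lower-bound}), and it is the missing idea in your plan.

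A related misstep: you assert that the shift $\mathcal{A}$ is ``calibrated to keep $\|W^{(t)}\|_{\tF}$ close to $\sqrt d$.'' It is not, and the paper never controls $\|W^{(t)}\|_{\tF}$. The purpose of $\mathcal{A}$ is to make the full update equal to one step of SGA on $\widehat{\mathsf{R}}^{(t)}(W)/\|W\|_{\tF}^{k/2-2}$, whose gradient structure yields a closed recursion for $a^{(t)}$ alone (Lemma~\ref{lemma:2nd-expansion}) that decouples from the norm. Tracking your unnormalized $\alpha^{(t)}$ and $\|W^{(t)}\|_{\tF}$ separately forces you into a norm-stabilization claim the algorithm does not satisfy, and without it you are thrown back onto the superlinear recursion above.
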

For even-order tensor PCA ($k\geq 4$), Theorem \ref{main-theorem} establishes that Algorithm 1 achieves improvements in both sample efficiency and recovery precision (since our algorithm operates in an online manner, sample size is equivalent with total iteration number). By treating $\sigma$ and $k$ as constants, it requires at most $\ticalO\left(d^{k/2}\right)$ critical threshold of $N\lambda^2$ to recover the planted vector $v_*$--matching the best known threshold of computationally efficient offline methods like degree-4 SoS \citep{hopkins2015tensor} and its related spectral methods \citep{hopkins2016fast}. This represents a significant reduction compared to state-of-the-art first-order methods, which require 
$\calO(d^{k-1})$ threshold \citep{arous2020algorithmic,arous2020online,wu2024sharp}. Furthermore, given 
$N$ samples, Algorithm \ref{SGA} achieves a recovery accuracy of $\ticalO\left(d^{k/8+3/2}/(N\lambda^2)\right)$, yielding a strictly faster convergence rate than the $\ticalO\left(d^{k/4}/\sqrt{N\lambda^2}\right)$ accuracy of SoS and its related spectral methods.
\begin{corollary}\label{corollary-no-Ass2-evenk}
    Consider the tensor PCA problem \ref{PCA-def} with even order $k\geq 4$, and suppose Assumption \ref{ass-2} holds. For any $\delta\in (0,1)$, under the same condition for $N$ in Eq.~\eqref{eq:cond-n-even-k} and the choice of $\eta_0$ in Eq.~\eqref{eq:cond-eta-even-k}, Algorithm \ref{SGA} returns an estimator $\hat{v}$ which satisfies
	\begin{align}
		\min\left\{\left\|\hat{v}-v_*\right\|^2,\left\|\hat{v}+v_*\right\|^2\right\}\leq\ticalO\left(\left(1-\frac{\eta_0\lambda k}{2e}\right)^{\frac{\lfloor N/\log(N)\rfloor}{2}}\frac{1}{k\delta^{\frac{1}{2}}}+\frac{\left(\lambda^2k+\sigma d^2\right) d^{\frac{k}{8}-\frac{1}{2}}}{\lambda^2k\delta^{\frac{1}{2}}N}+\frac{\sigma}{\lambda\delta^{\frac{1}{2}}\sqrt{N}}\right),\notag
	\end{align}
	with probability at least $1-\delta$.
\end{corollary}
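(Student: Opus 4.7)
The plan is to establish Corollary \ref{corollary-no-Ass2-evenk} by reducing it to Theorem \ref{main-theorem}, replacing every use of the exact second-moment identity from Assumption \ref{assumption-estimation-second-moment} by the uniform upper bound provided by Corollary \ref{corollary-estimation-second-moment}. First, I would audit the proof of Theorem \ref{main-theorem} and locate the places where the identity $\mathbb{E}[\langle u,\mathrm{flat}(\mathbf{E})\rangle^2]=\cst_0\sigma^2$ is invoked---these are exactly the variance computations that generate the $\cst_0\sigma^2$ summand in the Theorem's bound. Since Corollary \ref{corollary-estimation-second-moment} gives $\mathbb{E}[\langle u,\mathrm{flat}(\mathbf{E})\rangle^2]\leq 4\sigma^2$ for every unit $u\in\mathbb{R}^{d^k}$ under Assumption \ref{ass-2} alone, one may simply substitute $4$ for $\cst_0$ in those same chains of inequalities. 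After absorbing numerical and polylogarithmic constants into $\ticalO(\cdot)$, the contribution that was of order $\cst_0\sigma^2 d^{k/8-1/2}/(\lambda^2 k N)$ merges with the $(\lambda^2 k+\sigma d^2)$ piece of the Corollary, because under Assumption \ref{ass-2} one has $\sigma^2\lesssim\lambda^2 k+\sigma d^2$ whenever $\sigma\gtrsim 1$ and $d\geq k$.

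Second, I would account for the extra $\sigma/(\lambda\delta^{1/2}\sqrt{N})$ term, which is the only piece absent from Theorem \ref{main-theorem}. Its $1/\sqrt{N\delta}$ scaling is the characteristic signature of Chebyshev concentration for an empirical mean, and it is precisely the slack incurred when the variance of the noise projection is only upper-bounded rather than known exactly. In the Theorem's proof, the exact variance is used to show that an accumulated noise term of the form $\frac{1}{N}\sum_{t=1}^N\langle u^{(t)},\mathrm{flat}(\mathbf{E}^{(t)})\rangle$ is dominated by the deterministic population dynamics; without the identity one instead controls this sum directly via Chebyshev using the uniform bound $4\sigma^2$, which, after dividing by the signal scale $\lambda$, yields exactly the additive $O(\sigma/(\lambda\sqrt{N\delta}))$ term shown in the Corollary.

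The main obstacle will be that the direction $u^{(t)}$ along which we project the noise $\mathbf{E}^{(t)}$ is not deterministic but depends on the history $W^{(1)},\ldots,W^{(t-1)}$, so a direct application of Corollary \ref{corollary-estimation-second-moment} with a random $u^{(t)}$ is not immediate. I would handle this by conditioning on the natural filtration $\mathcal{F}_{t-1}$ generated by $\{\mathbf{T}^{(s)}\}_{s<t}$: since $\mathbf{E}^{(t)}$ is independent of $\mathcal{F}_{t-1}$ by \textbf{[A$_\text{1}$]} and $u^{(t)}$ is $\mathcal{F}_{t-1}$-measurable, Corollary \ref{corollary-estimation-second-moment} applies conditionally to give $\mathbb{E}\bigl[\langle u^{(t)},\mathrm{flat}(\mathbf{E}^{(t)})\rangle^2 \mid \mathcal{F}_{t-1}\bigr]\leq 4\sigma^2\|u^{(t)}\|^2$, so the accumulated noise becomes a martingale difference sequence with bounded conditional variance. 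A standard Chebyshev or Azuma-type bound then produces the required $1/\sqrt{N\delta}$ deviation, and the remainder of the Theorem \ref{main-theorem} argument transfers verbatim, completing the proof of the Corollary.
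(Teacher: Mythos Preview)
Your high-level intuition is right---dropping Assumption~\ref{assumption-estimation-second-moment} costs an additive $\sigma/(\lambda\sqrt{N\delta})$---but the mechanism you describe misidentifies where and how the identity is used, so the plan as written would not go through.

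The single place in the proof of Theorem~\ref{main-theorem} where Assumption~\ref{assumption-estimation-second-moment} enters is the computation of $\bbE_t\bigl[[\olinesg^{(t)}]^2\bigr]$ inside the derivation of the recursion~\eqref{main-recursion} (step (d) of~\eqref{esti-hatbeta}). The isotropy identity is not used merely as an upper bound: it forces a cancellation among the three pieces $\calI^{(t)}-\calII^{(t)}+\calIII^{(t)}$ so that $\bbE_t\bigl[[\olinesg^{(t)}]^2\bigr]=\tfrac{\cst_0 k\sigma^2}{2}\bigl(1-[\beta^{(t)}]^2\bigr)$, i.e.\ the noise variance is \emph{proportional to the current error} $\widehat{\beta}^{(t)}$. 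That proportionality is exactly what licenses the AM--GM split that absorbs half into the contraction and pushes the remainder to an $[\etat]^3$ term. With only the bound $\bbE_t\bigl[[\olinesg^{(t)}]^2\bigr]\lesssim\sigma^2 k^2$ from Corollary~\ref{corollary-estimation-second-moment}, no $\widehat{\beta}^{(t)}$ factor is available, so ``substituting $4$ for $\cst_0$'' does not reproduce the same chain; instead you are left with a bare $\sigma^2 k^2[\etat]^2$ additive term in the recursion. Summing that term under the geometric step-size schedule (exactly as in Lemma~\ref{esti-V}) gives an additional $V_{\mathrm{add}}^{(T_2)}\lesssim \sigma^2/(\lambda^2 N)$, and after the square root and Markov step this is the $\sigma/(\lambda\delta^{1/2}\sqrt{N})$ in the Corollary. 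Note in particular that the $\cst_0$-dependent piece of Theorem~\ref{main-theorem} is not ``merged'' with anything---it simply disappears, because without the cancellation the AM--GM split is never performed.

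Consequently there is no separate ``accumulated noise term of the form $\tfrac{1}{N}\sum_t\langle u^{(t)},\mathrm{flat}(\upE^{(t)})\rangle$'' to be controlled by Chebyshev or Azuma; the proof of Theorem~\ref{main-theorem} never forms such a sum. The extra $1/\sqrt{N}$ arises not from concentration of a linear statistic but from losing the variance-proportional-to-error cancellation inside the one-step second-moment recursion. Your conditioning remark in the third paragraph is fine and is implicitly what the paper does, but it should be applied to bound $\bbE_t\bigl[[\olinesg^{(t)}]^2\bigr]\lesssim\sigma^2 k^2$ directly; once that replacement is made, the rest of the Phase~II analysis runs unchanged with the modified recursion.
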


In complement to Theorem \ref{main-theorem}, we further establish the convergence analysis of Algorithm \ref{k-odd} for odd-order tensor PCA problems with $k\geq5$.
\begin{theorem}\label{sub-theorem}
	Consider the tensor PCA problem \ref{PCA-def} with odd order $k\geq 5$, and suppose Assumptions \ref{ass-2} and \ref{assumption-estimation-second-moment} hold. For any failing probability $\delta \in (0,1)$ and initial preprocessed unit $u\in \mathbb{R}^{d}$, if the sample size $N$ satisfies
	\begin{align}\label{eq:cond-N-odd-k}
        N\gtrsim\left(\max\left\{\frac{\log(N)}{d^{\frac{k-1}{4}-1}},\log(d)\right\}\right)^2\cdot\frac{\sigma k\log^{\frac{7}{2}}\left(\frac{\sigma kNd}{\delta}\right)}{\lambda^2\left|\langle v_*,u\rangle\right|^2}\cdot d^{\frac{k-1}{2}},
	\end{align}
    then by picking the initial step-size $\eta_0$ as
    \begin{align}\label{eq:cond-eta-odd-k}
        \eta_0\asymp\max\left\{\frac{\log(N)}{k-1},\frac{(k-1)d^{\frac{k-1}{4}-1}}{\max\left\{(k-1)(k-5),\log^{-1}(d)\right\}}\right\}\cdot\frac{\lceil\log(N)\rceil}{\lambda\left|\langle v_*,u\rangle\right| N},
    \end{align}
    $\hat{v}^{(1)}$ and $\hat{v}^{(2)}$ generated by Algorithm \ref{k-odd} satisfy
	\begin{align}
		&\min_{l\in\{1,2\}}\left\{\min\left\{\left\|\hat{v}^{(l)}-v_*\right\|^2,\left\|\hat{v}^{(l)}+v_*\right\|^2\right\}\right\}\lesssim\left(1-\frac{\eta_0\lambda\left|\langle v_*,u\rangle\right| (k-1)}{2e}\right)^{\frac{\lfloor N/\log(N)\rfloor}{2}}\frac{1}{k\delta^{\frac{1}{2}}}\notag
        \\
        &\qquad+\frac{\left(\max\left\{\frac{\log(N)}{d^{\frac{k-1}{4}-1}},\log(d)\right\}\right)^{\frac{1}{2}}\lceil\log(N)\rceil\left(\cst_0\sigma^2+\lambda^2\left|\langle v_*,u\rangle\right|^2k+\sigma \log\left(\frac{\sigma kNd}{\delta}\right)d^2\right)}{\lambda^2\left|\langle v_*,u\rangle\right|^2k\delta^{\frac{1}{2}}}\cdot\frac{d^{\frac{k}{8}-\frac{5}{8}}}{N},\notag
	\end{align}
	with probability at least $1-\delta$.
\end{theorem}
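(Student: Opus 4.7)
The plan is to reduce the odd-order problem to the even-order result in Theorem \ref{main-theorem}. For any unit $u\in\bbR^d$, the preprocessed tensor admits the decomposition
\begin{align*}
\upT^{(t)}(u) \;=\; \lambda\langle v_*,u\rangle\cdot v_*^{\otimes(k-1)} \;+\; \upE^{(t)}(u),
\end{align*}
which is an order-$(k-1)$ spiked tensor (with $k-1$ even) whose effective SNR equals $\lambda|\langle v_*,u\rangle|$. The update rule in Algorithm \ref{k-odd} uses the normalization $\|W\|_{\tF}^{(k-1)/2}$ and the shift coefficient $k-5=(k-1)-4$, so it is exactly Algorithm \ref{SGA} applied to this order-$(k-1)$ problem. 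My plan is therefore to re-run the analysis of Theorem \ref{main-theorem} with the substitutions $k\mapsto k-1$ and $\lambda\mapsto\lambda|\langle v_*,u\rangle|$, after verifying that the noise produced by preprocessing still satisfies the required assumptions.

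\textbf{Inheriting the noise assumptions.} The key identity is that for every $w\in\bbR^{d^{k-1}}$,
\begin{align*}
\llangle w,\expan(\upE^{(t)}(u))\rrangle \;=\; \llangle u\otimes w,\expan(\upE^{(t)})\rrangle,\qquad \|u\otimes w\|^2 = \|u\|^2\|w\|^2 = \|w\|^2,
\end{align*}
so Assumption \ref{ass-2} transfers to $\upE^{(t)}(u)$ with the same sub-Gaussian parameter $\sigma$, and by the same identity Assumption \ref{assumption-estimation-second-moment} (or Corollary \ref{corollary-estimation-second-moment}) transfers as well. Independence across iterations is preserved because $u$ is fixed before sampling. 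Once this is in hand, the reduced problem satisfies every hypothesis required by Theorem \ref{main-theorem}.

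\textbf{Handling sign ambiguity via bi-threading.} Denote $\mu:=\lambda\langle v_*,u\rangle$. When $\mu>0$, the first instance of Algorithm \ref{k-odd} is literally Algorithm \ref{SGA} on the order-$(k-1)$ problem with SNR $|\mu|$; when $\mu<0$, observe that $-\upT^{(t)}(u)=|\mu|v_*^{\otimes(k-1)}+(-\upE^{(t)}(u))$ and that $-\upE^{(t)}(u)$ inherits the same sub-Gaussian parameter, so the second instance—which does NSGA on $-\widehat{\mathsf{R}}^{(t)}_{\mathrm{odd}}$—is again Algorithm \ref{SGA} on a valid order-$(k-1)$ instance with SNR $|\mu|$. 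In either case, one of the two instances satisfies all the hypotheses of Theorem \ref{main-theorem}, and the $\min_{l\in\{1,2\}}$ in the statement selects it. Substituting $k\mapsto k-1$ and $\lambda\mapsto\lambda|\langle v_*,u\rangle|$ into Eq.~\eqref{eq:cond-n-even-k} and Eq.~\eqref{eq:cond-eta-even-k} yields Eq.~\eqref{eq:cond-N-odd-k} and Eq.~\eqref{eq:cond-eta-odd-k} (up to the harmless replacement of $k-1$ by $k$ in non-leading multiplicative constants), and substituting into the error bound of Theorem \ref{main-theorem} turns the exponent $d^{k/8-1/2}$ into $d^{(k-1)/8-1/2}=d^{k/8-5/8}$, matching the stated conclusion.

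\textbf{Main obstacle.} Since no new iteration-level analysis is needed, the only genuine technical point is checking that linear contraction by the unit vector $u$ preserves the rotationally invariant sub-Gaussian structure; the tensor-product identity $\|u\otimes w\|=\|u\|\|w\|$ makes this immediate, but it is exactly this step that determines whether the effective SNR scales as $\lambda|\langle v_*,u\rangle|$ rather than being degraded by a dimensional factor, so it deserves careful verification. A secondary bookkeeping issue is tracking the sign of $\mu$ through the bi-threaded update to confirm that one of the two parallel instances indeed realizes gradient ascent on a positive-SNR problem; this is dispatched by the symmetry argument above.
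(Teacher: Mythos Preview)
Your proposal is correct and follows essentially the same reduction as the paper: decompose $\upT(u)=\lambda\langle v_*,u\rangle\,v_*^{\otimes(k-1)}+\upE(u)$, observe this is an even-order instance with effective SNR $\lambda|\langle v_*,u\rangle|$ whose noise inherits Assumption~\ref{ass-2} because $u$ is a unit vector, and invoke Theorem~\ref{main-theorem} with $k\mapsto k-1$ and $\lambda\mapsto\lambda|\langle v_*,u\rangle|$. Your treatment is in fact more explicit than the paper's on two points the paper leaves implicit---the tensor-product identity $\|u\otimes w\|=\|w\|$ that certifies the sub-Gaussian parameter is preserved, and the bi-threading argument showing that exactly one of the two instances realizes positive-SNR ascent---so nothing is missing.
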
 
\begin{proof}
	For odd $k$, notice that $\upT$ can be decomposed into $d$ slices $\left\{\upT_i\right\}_{i=1}^d$, where each slice  $\upT_i\in\otimes^{k-1}\bbR^d$ is a sub-tensor satisfying
	\begin{align}
		\upT_i=\lambda (v_*)_i\cdot v_*^{\otimes(k-1)}+\upE_i,\notag
	\end{align}
	The collection of random tensor slices $\left\{\upE_i\right\}_{i=1}^d$ can be concatenated to form the noise tensor $\upE$. Consequently, the preprocessed tensor $\upT(u)$ obtained in Algorithm \ref{k-odd} (where $u$ is the unit preprocessing vector) admits the equivalent expression:
	\begin{align}
		\upT(u) = \sum_{i=1}^d u_i \cdot \upT_i=\lambda\langle v_*,u\rangle v_*^{\otimes (k-1)}+\upE(u). \notag
	\end{align}
	Observing that $\upT(u)$ is an even-order tensor with a SNR of $\lambda\langle v_*, u\rangle$, and that the random noise tensor $\upE(u)$ satisfies Assumption \ref{ass-2} (as established by $u$ being a unit vector and $\upE$ satisfying Assumption \ref{ass-2}), it follows that $\upT({u})$ satisfies the conditions of Theorem \ref{main-theorem}. Therefore, direct application of Theorem \ref{main-theorem} completes the proof of Theorem \ref{sub-theorem}.
\end{proof}
\begin{corollary}\label{coro-odd}
	Consider the tensor PCA problem \ref{PCA-def} with odd order $k\geq 5$, and suppose Assumptions \ref{ass-2} hold. For any $\delta\in(0,1)$, under the same condition for $N$ in Eq.~\eqref{eq:cond-N-odd-k} and the choice of $\eta_0$ in Eq.~\eqref{eq:cond-eta-odd-k}, Algorithm \ref{k-odd} return the estimators $\left\{\hat{v}^{(1)},\hat{v}^{(2)}\right\}$ which satisfy
	\begin{equation}
		\begin{aligned}
			&\min_{l\in\{1,2\}}\left\{\min\left\{\left\|\hat{v}^{(l)}-v_*\right\|^2,\left\|\hat{v}^{(l)}+v_*\right\|^2\right\}\right\}
			\\
			\leq&\ticalO\left(\left(1-\frac{\eta_0\lambda\left|\langle v_*,u\rangle\right| (k-1)}{2e}\right)^{\frac{\lfloor N/\log(N)\rfloor}{2}}\frac{1}{k\delta^{\frac{1}{2}}}+\frac{\left(\lambda^2\left|\llangle v_*,u\rrangle\right|^2k+\sigma d^2\right)d^{\frac{k}{8}-\frac{5}{8}}}{\lambda^2\left|\llangle v_*,u\rrangle\right|^2k\delta^{\frac{1}{2}}N}+\frac{\sigma}{\lambda\left|\llangle v_*,u\rrangle\right|\delta^{\frac{1}{2}}\sqrt{N}}\right),\notag
		\end{aligned}
	\end{equation}
	with probability at least $1-\delta$.
\end{corollary}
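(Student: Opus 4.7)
The plan is to mirror the reduction used in the proof of Theorem \ref{sub-theorem}, but invoke Corollary \ref{corollary-no-Ass2-evenk} (the even-order result without Assumption \ref{assumption-estimation-second-moment}) in place of Theorem \ref{main-theorem}. As recorded there, the preprocessed tensor satisfies
\begin{align*}
    \upT(u) \;=\; \lambda \langle v_*, u\rangle\, v_*^{\otimes (k-1)} \;+\; \upE(u),
\end{align*}
so $\upT(u)$ is an order-$(k-1)$ tensor of exactly the form appearing in Problem \ref{PCA-def}, with order $k-1$ (even, since $k$ is odd), signal vector $v_*$, and effective SNR $\lambda \langle v_*, u\rangle$. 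The sign ambiguity of $\langle v_*, u\rangle$ is precisely what motivates running the two parallel instances in Algorithm \ref{k-odd}; whichever sign is correct, the corresponding instance executes normalized stochastic gradient \emph{ascent} on a reward with positive SNR $\lambda|\langle v_*, u\rangle|$, so it suffices to bound $\min\{\|\hat{v}^{(l)}-v_*\|^2,\|\hat{v}^{(l)}+v_*\|^2\}$ for that instance and take the minimum over $l\in\{1,2\}$.

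The first technical step is to verify that Assumption \ref{ass-2} transfers from $\upE$ to $\upE(u)$ with the same sub-Gaussian parameter $\sigma$. For any $u'\in\bbR^{d^{k-1}}$, flattening shows that $\llangle u',\expan(\upE(u))\rrangle = \llangle u\otimes u', \expan(\upE)\rrangle$ (with an appropriate lexicographic identification), and since $\|u\otimes u'\| = \|u\|\cdot\|u'\| = \|u'\|$, the assumed MGF bound for $\upE$ yields
\begin{align*}
    \bbE\bigl[\exp\{\llangle u',\expan(\upE(u))\rrangle\}\bigr] \;\leq\; \exp\{\sigma^2\|u'\|^2\}.
\end{align*}
Hence the preprocessed noise tensor $\upE(u)$ satisfies Assumption \ref{ass-2} on the order-$(k-1)$ space with the same parameter $\sigma$. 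Crucially, we do \emph{not} impose Assumption \ref{assumption-estimation-second-moment} on $\upE(u)$, which is why we must invoke Corollary \ref{corollary-no-Ass2-evenk} rather than Theorem \ref{main-theorem}.

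The next step is to apply Corollary \ref{corollary-no-Ass2-evenk} to $\upT(u)$ with the parameter substitution $k \mapsto k-1$ and $\lambda \mapsto \lambda|\langle v_*,u\rangle|$. Under this substitution, the sample-size condition \eqref{eq:cond-n-even-k} and the step-size choice \eqref{eq:cond-eta-even-k} translate exactly to the stated conditions \eqref{eq:cond-N-odd-k} and \eqref{eq:cond-eta-odd-k}, respectively. The three error terms in Corollary \ref{corollary-no-Ass2-evenk} become, after substitution, the three terms displayed in the corollary: the geometric contraction factor with $k-1$ in place of $k$ and $\lambda|\langle v_*,u\rangle|$ in place of $\lambda$; a statistical term with exponent $(k-1)/8 - 1/2 = k/8 - 5/8$; and the auxiliary sub-Gaussian term $\sigma/(\lambda|\langle v_*,u\rangle|\delta^{1/2}\sqrt{N})$ (which is the replacement for what would have been the $\cst_0\sigma^2$-dependent refinement).

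The hard part is essentially bookkeeping rather than a new conceptual obstacle: one must check that the $\poly\log$ factors and the $\max\{\log(N)/d^{(k-1)/4 - 1},\log(d)\}$ expressions produced by the substitution are absorbed by the $\ticalO(\cdot)$ notation, and that the $\delta$-dependence propagates correctly through the parallel-instance selection (the factor $1/2$ from $\bbP[l]=0.5$ only adjusts constants). Once these checks are done, the claimed bound follows directly; no additional probabilistic arguments beyond those already captured in Corollary \ref{corollary-no-Ass2-evenk} are required.
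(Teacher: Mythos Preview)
Your proposal is correct and follows exactly the approach the paper intends: reduce the odd-order case to the even-order case via the preprocessing $\upT(u)=\lambda\langle v_*,u\rangle v_*^{\otimes(k-1)}+\upE(u)$ (as in the proof of Theorem \ref{sub-theorem}), verify that $\upE(u)$ inherits Assumption \ref{ass-2}, and then invoke Corollary \ref{corollary-no-Ass2-evenk} with the substitutions $k\mapsto k-1$ and $\lambda\mapsto\lambda|\langle v_*,u\rangle|$. The exponent identity $(k-1)/8-1/2=k/8-5/8$ and the absorption of $k-1\asymp k$ into the $\ticalO$ notation are exactly the bookkeeping needed, and nothing further is required.
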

\begin{corollary}\label{one-output-odd}
	Consider the tensor PCA problem \ref{PCA-def} with odd order $k\geq 5$. The output of Algorithm \ref{k-odd} may be refined through an estimator that selects the optimal candidate from $\left\{\pm \hat{v}^{(1)},\pm \hat{v}^{(2)}\right\}$. Given a total sample size $N$, we allocate the first $N/2$ observed tensors to execute the algorithm. The estimator $\upT_{\mathrm{esti}}$ is then constructed from the remaining $N/2$ tensors: $\upT_{\mathrm{esti}}=\frac{2}{N}\sum_{i=N/2+1}^{N}\upT^{(i)}$. The optimal vector is selected as the element in $\left\{\pm \hat{v}^{(1)}, \pm \hat{v}^{(2)}\right\}$ maximizing the inner product $\llangle v^{\otimes k},\upT_{\mathrm{esti}}\rrangle$. This vector, designated $\hat{v}$, serves as the final output of Algorithm \ref{k-odd}. Suppose Assumptions \ref{ass-2} and \ref{assumption-estimation-second-moment} hold. Under the same condition for $N$ in Eq.~\eqref{eq:cond-N-odd-k} and the choice of $\eta_0$ in Eq.~\eqref{eq:cond-eta-odd-k}, treating $k$ and $\mathsf{c}_0$ as constants, the resulting error between $\hat{v}$ and the planted vector $v_*$ satisfies
	\begin{align}
		\left\|\hat{v}-v_*\right\|^2\leq\ticalO\left(\frac{\sigma d^{\frac{k}{8}+\frac{11}{8}}}{\lambda^2\left|\langle v_*,u\rangle\right|^2\delta^{\frac{1}{2}}N}+\frac{\sigma}{\lambda\sqrt{N}}\right),\notag
	\end{align}
	with probability at least $1-\delta$ for any $\delta\in(0,1)$.
\end{corollary}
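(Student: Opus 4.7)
\textbf{Proof plan for Corollary \ref{one-output-odd}.}
The strategy is to use the first $N/2$ samples to guarantee (via Theorem \ref{sub-theorem}) that at least one element of the four-point set $\mathcal{S}:=\{\pm\hat v^{(1)},\pm\hat v^{(2)}\}$ is close to $v_*$, and then to use the remaining $N/2$ samples, which are statistically independent from $\mathcal{S}$, to perform the sign/candidate selection via $\mathbf{T}_{\mathrm{esti}}$. I would split the failure budget as $\delta/2+\delta/2$ for these two stages.

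\textbf{Stage 1 (optimization on the first $N/2$ samples).} Apply Theorem \ref{sub-theorem} with sample size $N/2$. Under the scaling in \eqref{eq:cond-N-odd-k} and \eqref{eq:cond-eta-odd-k}, the geometric step-size decay drives the initial $\bigl(1-\eta_0\lambda|\langle v_*,u\rangle|(k-1)/(2e)\bigr)^{\lfloor N/(2\log N)\rfloor/2}$ term to be dominated by the statistical term, so with probability at least $1-\delta/2$ there exist $l^\star\in\{1,2\}$ and $s^\star\in\{-1,+1\}$ with
\begin{equation*}
\bigl\|\hat v^{(l^\star)}-s^\star v_*\bigr\|^2 \;\le\; \epsilon^2 \;:=\; \widetilde O\!\left(\frac{\sigma\, d^{k/8+11/8}}{\lambda^2|\langle v_*,u\rangle|^2\,\delta^{1/2}\,N}\right),
\end{equation*}
treating $k$ and $\mathsf{c}_0$ as constants. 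Equivalently $\alpha^\star:=\langle s^\star\hat v^{(l^\star)},v_*\rangle\ge 1-\epsilon^2/2$.

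\textbf{Stage 2 (sign/candidate selection via $\mathbf{T}_{\mathrm{esti}}$).} Write $\mathbf{T}_{\mathrm{esti}}=\lambda v_*^{\otimes k}+\overline{\mathbf{E}}$ with $\overline{\mathbf{E}}=(2/N)\sum_{i=N/2+1}^{N}\mathbf{E}^{(i)}$. For any unit vector $v$, Assumption \ref{ass-2} and independence of the $\mathbf{E}^{(i)}$ imply that $\langle v^{\otimes k},\overline{\mathbf{E}}\rangle$ is sub-Gaussian with parameter of order $\sigma/\sqrt{N}$. Crucially, $\mathcal{S}$ depends only on the first $N/2$ samples, hence is independent of $\overline{\mathbf{E}}$. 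Conditioning on $\mathcal{S}$ and applying a union bound over its four elements, we obtain
\begin{equation*}
\max_{v\in\mathcal{S}}\bigl|\langle v^{\otimes k},\overline{\mathbf{E}}\rangle\bigr| \;\le\; C\,\sigma\sqrt{\log(1/\delta)}/\sqrt{N},
\end{equation*}
with probability at least $1-\delta/2$, for an absolute constant $C$.

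\textbf{Stage 3 (converting the arg-max to an error bound).} By definition $\hat v\in\mathcal{S}$ maximizes $\langle v^{\otimes k},\mathbf{T}_{\mathrm{esti}}\rangle=\lambda\langle v,v_*\rangle^k+\langle v^{\otimes k},\overline{\mathbf{E}}\rangle$. Comparing $\hat v$ with the good candidate $s^\star\hat v^{(l^\star)}$ and using Stage 2,
\begin{equation*}
\lambda\langle\hat v,v_*\rangle^k \;\ge\; \lambda(\alpha^\star)^k - 2C\sigma\sqrt{\log(1/\delta)}/\sqrt{N} \;\ge\; \lambda\bigl(1-k\epsilon^2/2\bigr) - 2C\sigma\sqrt{\log(1/\delta)}/\sqrt{N},
\end{equation*}
where the last inequality uses $(1-\epsilon^2/2)^k\ge 1-k\epsilon^2/2$. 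Since $k$ is odd, $\langle\hat v,v_*\rangle^k$ preserves sign, so from the display above $\langle\hat v,v_*\rangle\ge (1-a)^{1/k}$ with $a:=k\epsilon^2/2+2C\sigma\sqrt{\log(1/\delta)}/(\lambda\sqrt{N})$. Invoking the elementary convexity bound $(1-a)^{1/k}\ge 1-a/k$ for $a\in[0,1]$ yields
\begin{equation*}
\|\hat v-v_*\|^2 \;=\; 2\bigl(1-\langle\hat v,v_*\rangle\bigr) \;\le\; \epsilon^2 + \frac{4C\sigma\sqrt{\log(1/\delta)}}{k\lambda\sqrt{N}},
\end{equation*}
which matches the claimed rate after a union bound over the two stages.

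\textbf{Expected main obstacle.} The bookkeeping in Stage 1 is routine given Theorem \ref{sub-theorem}, and the sub-Gaussian tail in Stage 2 is standard; the genuinely delicate point is Stage 3, namely ensuring that the rate loss incurred by inverting $\langle\hat v,v_*\rangle^k\ge 1-a$ is only by a factor $1/k$ (not exponential in $k$) and verifying that the event $a\in[0,1]$ holds under the prescribed scaling of $N$. One must also carefully exploit the independence between $\mathcal{S}$ and $\overline{\mathbf{E}}$ to avoid a naive $|\mathcal{S}|$-dependent covering of the unit sphere, which would otherwise contribute an additional $d^{k/2}$-type factor and destroy the target rate.
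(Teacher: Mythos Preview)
The paper does not supply an explicit proof of this corollary; it is stated as a direct consequence of Theorem~\ref{sub-theorem} combined with a fresh-sample selection step, which is precisely the route you take. Your three-stage plan---invoke Theorem~\ref{sub-theorem} on the first half of the data, control the noise in $\upT_{\mathrm{esti}}$ via independence and a four-point union bound, then compare the arg-max to the known good candidate---is the natural and correct argument.

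There is one slip in Stage~3: the inequality $(1-a)^{1/k}\ge 1-a/k$ that you invoke is in the wrong direction. Since $x\mapsto x^{1/k}$ is concave on $[0,1]$ for $k\ge 1$, its tangent at $x=1$ lies above the graph, so in fact $(1-a)^{1/k}\le 1-a/k$. The fix is immediate: use instead the trivial bound $(1-a)^{1/k}\ge 1-a$ for $a\in[0,1]$ (valid because $x^{1/k}\ge x$ on $[0,1]$), which gives
\[
\|\hat v-v_*\|^2=2\bigl(1-\langle\hat v,v_*\rangle\bigr)\le 2a=k\epsilon^2+\frac{4C\sigma\sqrt{\log(1/\delta)}}{\lambda\sqrt{N}}.
\]
As $k$ is treated as a constant, the claimed rate is unchanged. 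The side condition $a\in[0,1]$ that you flag is routine under~\eqref{eq:cond-N-odd-k}, since that scaling forces both $\epsilon^2$ and $\sigma/(\lambda\sqrt{N})$ to be $o(1)$.
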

\begin{corollary}\label{prob-odd}
	Consider the tensor PCA problem \ref{PCA-def} with odd order $k\geq 5$. Suppose Assumptions \ref{ass-2} and \ref{assumption-estimation-second-moment} hold, and the preprocessed vector $u$ is generated by uniform sampling on the unit sphere in $\bbR^d$. For any $\delta\in(0,1/2)$, we define a hyper-parameter $\tau\in\left(0, \sqrt{d} t_{d-1,(1+\delta)/2}/\left(\sqrt{d}+t_{d-1,(1+\delta)/2}\right)\right]$ where $t_{d-1,(1+\delta)/2}$ denotes the $(1+\delta)/2$-quantile of a t-distribution with $d-1$ degrees of freedom. 
    Under the same choice of $\eta_0$ in Eq.~\eqref{eq:cond-eta-odd-k} but replaces $|\langle v_*,u\rangle|$ with $\tau d^{-1/2}$, then with a sample size $N\lambda^2\geq\widetilde{\Omega}\left(\sigma d^{\frac{k+1}{2}}/\tau^2\right)$, the parameter set $\left\{\pm \hat{v}^{(1)},\pm \hat{v}^{(2)}\right\}$ obtained by Algorithm \ref{k-odd} satisfies the following inequality 
	\begin{align}
		\min_{l\in\{1,2\}}\left\{\min\left\{\left\|\hat{v}^{(l)}-v_*\right\|^2,\left\|\hat{v}^{(l)}+v_*\right\|^2\right\}\right\}\leq\ticalO\left(\frac{\sigma d^{\frac{k}{8}+\frac{19}{8}}}{\lambda^2\tau^2\delta^{\frac{1}{2}}N}\right),\notag
	\end{align}
	with probability at least $1-2\delta$. This holds when treating $k$ as constant. 
    Corollary \ref{prob-odd} provides a quantitative probability for the initialization of Theorem \ref{sub-theorem}. The initialization probability quantification method--substituting $\tau d^{-1/2}$ for $|\langle v_*,u\rangle|$--can be directly applied to Corollary \ref{coro-odd} and Corollary \ref{one-output-odd}.
\end{corollary}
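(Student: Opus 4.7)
The strategy is to reduce Corollary \ref{prob-odd} to Theorem \ref{sub-theorem} by first establishing a high-probability lower bound on $|\langle v_*, u\rangle|$ arising solely from the uniform distribution of $u$ on the unit sphere, and then applying the algorithmic guarantee with this bound in force; a union bound over the two independent sources of randomness produces the claimed $1-2\delta$ confidence.

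To quantify the initialization I would invoke rotational invariance and assume without loss of generality $v_* = e_1$, so $\langle v_*, u\rangle = u_1$. Writing $u = X/\|X\|$ with $X \sim \calN(0, I_d)$ and decomposing $\|X\|^2 = X_1^2 + \|X_{2:d}\|^2$, the quantity
\begin{align}
T := \sqrt{d-1}\,\frac{u_1}{\sqrt{1-u_1^2}} = \frac{X_1}{\sqrt{\|X_{2:d}\|^2/(d-1)}}
\end{align}
is a standard Student's $t$-statistic with $d-1$ degrees of freedom. Since $x \mapsto \sqrt{d-1}\,|x|/\sqrt{1-x^2}$ is strictly increasing on $[0,1)$, the event $\{|u_1| \geq \tau/\sqrt{d}\}$ coincides with $\{|T| \geq \tau\sqrt{(d-1)/(d-\tau^2)}\}$. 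Setting $t_\ast := t_{d-1,(1+\delta)/2}$, the stipulated upper bound $\tau \leq \sqrt{d}\,t_\ast/(\sqrt{d}+t_\ast)$ is equivalent, after squaring and rearranging, to the elementary inequality $d-1+t_\ast^2 \leq (\sqrt{d}+t_\ast)^2$—which follows from $2\sqrt{d}\,t_\ast + 1 > 0$—and in turn implies $\tau\sqrt{(d-1)/(d-\tau^2)} \leq t_\ast$. Combined with the symmetry identity $\bbP(|T| \geq t_\ast) = 1-\delta$, this yields $\bbP(|\langle v_*, u\rangle| \geq \tau/\sqrt{d}) \geq 1-\delta$.

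Conditional on this event, I would apply Theorem \ref{sub-theorem} with $|\langle v_*, u\rangle|$ replaced by $\tau/\sqrt{d}$. Substitution into the sample-size condition \eqref{eq:cond-N-odd-k} recovers $N\lambda^2 \geq \widetilde{\Omega}(\sigma d^{(k+1)/2}/\tau^2)$, matching the stated assumption. Substituting the same lower bound into the error estimate of Theorem \ref{sub-theorem} and treating $k,\mathsf{c}_0$ as constants, the dominant contribution is the term scaling as $\sigma d^2 \cdot d^{k/8-5/8}/(\lambda^2(\tau^2/d)\delta^{1/2}N) = \sigma d^{k/8+19/8}/(\lambda^2\tau^2\delta^{1/2}N)$; the geometric-decay term together with the $\sigma^2$- and $\lambda^2\tau^2$-scaling summands are of strictly smaller order in the regime $\lambda \leq \calO(d^{k/4})$ dictated by Assumption [A$_3$] and the chosen sample size. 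A union bound over the initialization event and the algorithm's internal randomness (each failing with probability at most $\delta$) delivers the $1-2\delta$ confidence.

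The only non-routine calculation is the elementary reduction of the stated range of $\tau$ to $\tau\sqrt{(d-1)/(d-\tau^2)} \leq t_{d-1,(1+\delta)/2}$; the remainder is direct substitution into Theorem \ref{sub-theorem} together with order-of-magnitude bookkeeping of the resulting terms.
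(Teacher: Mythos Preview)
Your proposal is correct and follows essentially the same route as the paper: both use rotational invariance to reduce to the first coordinate of a uniform spherical vector, identify the relevant statistic with a Student-$t_{d-1}$ variable (the paper goes via $F_{1,d-1}=T^2$, you work with $T$ directly), and arrive at the identical requirement $\tau^2(d-1)/(d-\tau^2)\le t_{d-1,(1+\delta)/2}^2$. You additionally spell out why the corollary's stated range for $\tau$ implies this inequality and make the union bound and substitution into Theorem~\ref{sub-theorem} explicit, both of which the paper leaves to the reader; one small wording issue is that your ``equivalent'' should read ``sufficient'' (the corollary's range is slightly stricter than the exact threshold), but the implication you derive is the correct one.
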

\begin{remark}\label{remark-odd}
	Consider the tensor PCA problem \ref{PCA-def} with odd order $k\geq 5$. If each entry of the random noise tensor $\upE$ is drawn i.i.d from $\mathcal{N}(0, \sigma^2)$, and the preprocessed vector $u$ in Algorithm \ref{k-odd} is obtained via a method analogous to the partial trace algorithm for tensor PCA \citep{hopkins2016fast}, then, by Theorem \ref{sub-theorem}, Algorithm \ref{k-odd} can recover the vector $v_*$ with threshold $\widetilde{\Omega}(d^{k/2})$.
	
	Specifically, during the initialization phase, $N_1\lambda^2 \gtrsim \widetilde{\Omega}(d^{k/2})$ observed tensors are sampled, and their average is computed as:
	$$
	\upT_{N_1}=\lambda v_*^{\otimes k}+\frac{1}{\sqrt{N_1}}\upE,
	$$
	From this, the following preprocessed partial trace vector $u$ is constructed:
	$$
	u=\frac{\upT_{N_1}\left(\upI_d^{\otimes\frac{k-1}{2}}\right)}{\left\|\upT_{N_1}\left(\upI_d^{\otimes\frac{k-1}{2}}\right)\right\|},
	$$
	This preprocessed vector satisfies $|\langle u, v_*\rangle| \gtrsim d^{-1/4}$ with high probability. Therefore, following the proof technique of Corollary \ref{prob-odd}, one may directly substitute $|\langle u, v_*\rangle|$ with $d^{-1/4}$ in both the parameter settings and the conclusion of Theorem \ref{sub-theorem}. Consequently, treating $k$ as constant, and provided that the number of samples used in the iterative phase satisfies $N_2\lambda^2 \geq \ticalO(d^{k/2})$, the algorithm recovers $v_*$ with an accuracy of $\ticalO(\sigma d^{k/8 + 15/8} / N_2)$.
\end{remark}

\section{Proof Sketch and Discussions}\label{discussion}
To establish Theorem \ref{main-theorem}, we focus on a sequence of high-probability events (the events $\left\{\calA^{(t+1)}(\delta)\right\}_{t=0}^{T-1}$ detailed in Lemma \ref{lemma:high-prob}) by discarding a series of low-probability events which consist of a set of failure scenarios. On the high-probability events, the stochastic noise tensor remains bounded throughout the iterative process. The proof proceeds in two distinct phases: 

\noindent\textbf{Phase 1 (Alignment)}: We demonstrate that SGD drives the principal component of $W^{(t)}/\left\|W^{(t)}\right\|_{\tF}$ towards the target matrix $v_*v_*^{\top}$, i.e., achieves the alignment of the principal component (see Theorem \ref{thm-phase-I-tensor-PCA}). This phase centers on analyzing the trajectory of the reference variable $\alpha^{(t)}:=\llangle v_*, W^{(t)}v_*\rrangle/\left\|W^{(t)}\right\|_{\tF}$. The analysis unfolds in two parts: (a) We establish a uniform high-probability lower bound for $\alpha^{(t)}$ over the time interval $[T_1]$ (see Lemma \ref{PCA-phase-I-upper-bound}). (b) We prove that $\max_{t\leq T_1}\alpha^{(t)}$ converges to a neighborhood of 1 with high probability (see Lemma \ref{PCA-phase-I-lower-bound}). Consequently, at the end of this phase ($t=T_1$), the lower bound guarantees $\alpha^{(T_1)}\geq 1-\calO(1/k)$ with high probability (see Lemma \ref{PCA-phase-I-lower-bound-last-iterate}). 

\noindent\textbf{Phase 2 (Estimation)}: In this phase, we establish the global convergence of Algorithm \ref{SGA} for reference variable $\alpha^{(T)}$ (see Theorem \ref{phase-II-convergence-thm}). The analysis of Algorithm \ref{SGA}'s iterates can be effectively reduced to studying SGD with geometrically decaying step sizes on a one-dimensional linear regression problem. This phase also consists of two key components: (a) We assert that $\alpha^{(t)}$ remains uniformly lower bounded by $1-3\epsilon/2$ over the subsequent time interval $[T_1:T]$ with high probability (see Lemma \ref{phase-II-high-probability}). (b) We construct an auxiliary sequence $\left\{\beta^{(t)}\right\}_{t=1}^{T-T_1}$ that closely tracks $\left\{\alpha^{(T_1+t)}\right\}_{t=1}^{T-T_1}$ with high probability. The update dynamics of $\beta^{(t)}$ over $[T-T_1]$ are approximated by SGD in a standard linear regression setting. We provide separate bounds for the inherent variance term (see Lemma \ref{esti-V}) and the bias term (see Lemma \ref{esti-B}), enabling a precise characterization of the convergence behavior. 

Finally, leveraging the PCA
of the algorithm's output matrix parameter 
$\frac{W^{(T)}+(W^{(T)})^{\top}}{2\left\|W^{(T)}\right\|_{\tF}}$, we prove that the $\ell_2$--norm error between $\frac{W^{(T)}+(W^{(T)})^{\top}}{2\left\|W^{(T)}\right\|_{\tF}}$'s dominant singular vector and the planted vector $v_*$ is bounded by the error between the reference variable $\alpha^{(T)}$ and $1$.

The two key insights mentioned in the introduction, which are crucial for enhancing algorithmic performance, play the following roles in our theoretical analysis:

\noindent\emph{(1) Effective use of randomness: } Due to the higher-order structure of the signal vector in the tensor PCA problem, the dynamics of the reference variable $\alpha$ in discrete-time SGD exhibit a leading growth term at step $t+1$ of the polynomial form: $\eta_t \left[\alpha^{(t)}\right]^{k/2 - 1}$. Therefore, in the case where $k>4$, it is natural to analyze the dynamics of $\alpha^{-(k/2 - 2)}$ during the SGD iterations, whereas for $k=4$, we focus on the dynamics of $\alpha$. When $k>4$, the dominant decrease term in $\alpha^{-(k/2-2)}$ at step $t+1$ is $\eta_t C_1(\lambda, k)$, where $C_1(\lambda, k)$ is a constant depending on $\lambda$ and $k$. For $k=4$, the dominant increase term in $\log(\alpha)$ at step $t+1$ is $\eta_t C_2(\lambda, k)$, where $ C_2(\lambda, k)$ is another constant depending on $\lambda$ and $k$. The higher-order effects generated by the stochastic noise tensor are scaled by $\eta_t^2$, and are thus dominated by these leading-order decrease (or increase) terms. Moreover, the zero-mean and sub-Gaussian properties of the noise tensor ensure that the first-order stochastic effects can be controlled via martingale concentration inequalities. Consequently, the increase in the reference variable $\alpha^{(t)}$ during emph{Phase I}——or equivalently, the decrease in $\left[\alpha^{(t)}\right]^{-(k/2-2)}$ when $k>4$——is intuitively justified. This further implies that $W_t/\|W_t\|$ converges to the rank-1 matrix $v_*v_*^{\top}$ during \emph{Phase I}.

\noindent\emph{(2) Overparameterization: }Initializing with the identity matrix yields an initial value $\alpha^{(0)} = d^{-1/2}$. In contrast, under the vector parameterization with initialization obtained by uniform sampling on the unit sphere, the corresponding reference variable satisfies $\langle v_*, v_0 \rangle^2 \gtrsim d^{-1}$ with high probability. Elevating the initial scale to $d^{-1/2}$ is crucial for achieving the recovery of $v_*$ with the desired sample complexity of $\ticalO(d^{k/2})$.

\begin{figure}[t]
	\centering 
	\includegraphics[width=1.0\textwidth]{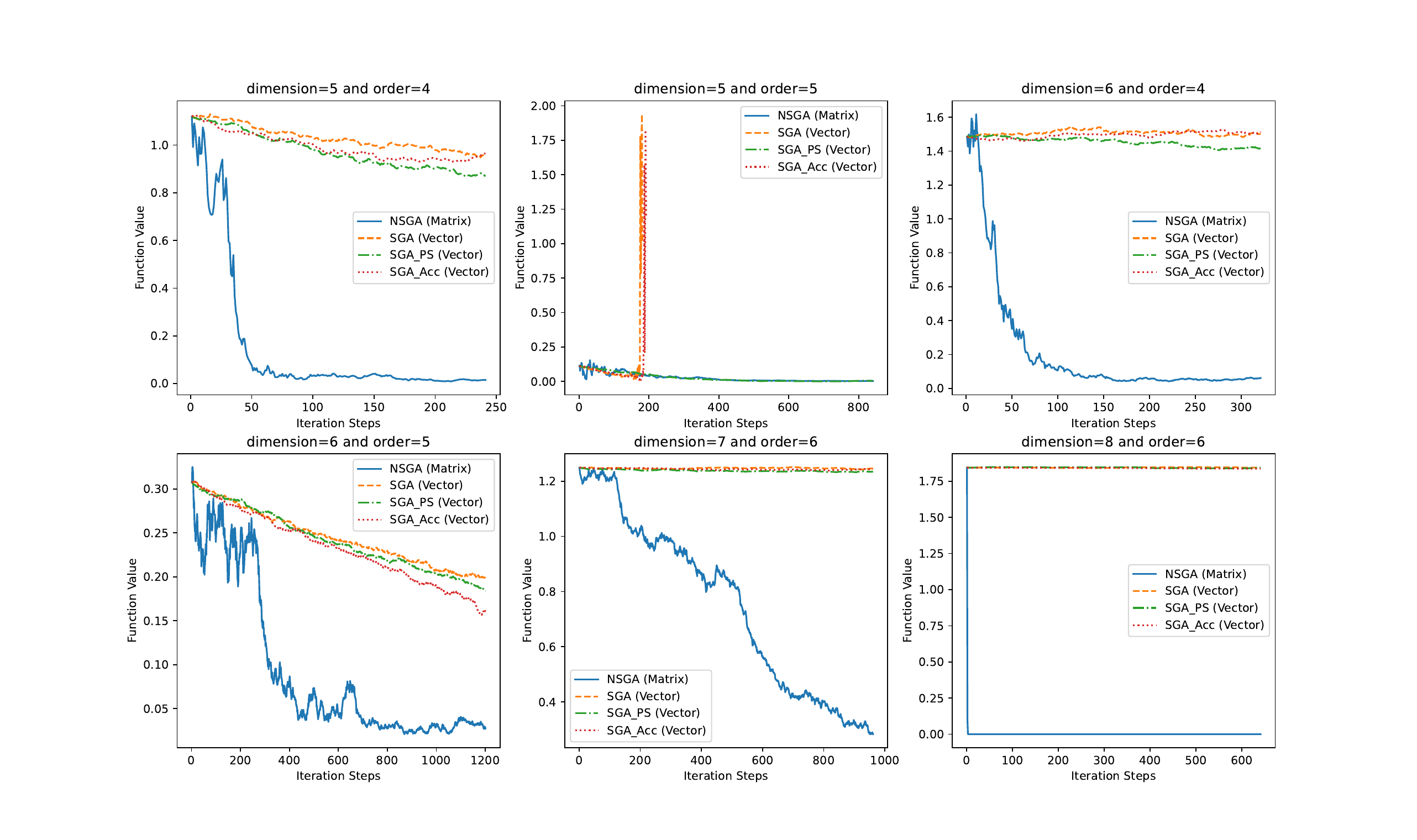} 
	\caption{ The figure shows the convergence behavior among several gradient-based methods, where the horizontal axis represents the number of iterations (or sample size) $N$, and the vertical axis represents the squared estimation error $\left\|\hat{v}^{(N)}-v_*\right\|^2$. The signal vector \(v_*\) is sampled uniformly from the unit sphere. In the comparison, ``NSGA (Matrix)'' denotes our proposed method (corresponding to Algorithm \ref{SGA} when the order \(k\) is even, and Algorithm \ref{k-odd} when \(k\) is odd). ``SGA (Vector)'' represents the standard stochastic gradient ascent under exact parameterization, while ``SGA\_PS (Vector)'' represents the exact parameterization of SGA with projection onto the sphere \citep{arous2020online}. Finally, ``SGA\_Acc (Vector)'' corresponds to the accelerated variant of SGA \citep{lan2020first}.} 
	\label{fig2} 
\end{figure}

\section{Conclusion}
In this work, we introduce NSGA with overparameterization to recover the signal vector $v_*$ for the tensor PCA problem without relying on global or spectral initialization. Our theoretical analysis demonstrates that the proposed method achieves recovery with constant probability when $N\lambda^2\geq\widetilde{\Omega}(d^{\lceil k/2\rceil})$, markedly improving upon the previously conjectured threshold of $\Omega(d^{k-1})$ for gradient-based methods. For even $k$, the $\widetilde{\Omega}(d^{k/2})$ threshold coincides with the optimal threshold under computational constraints, attained by sum-of-squares relaxations and related algorithms. We demonstrate that the overparameterized stochastic gradient method not only establishes a significant initial optimization advantage during the early learning phase but also achieves strong generalization guarantees——a finding that may offer valuable guidance for designing gradient-based methods in other machine learning problems.

\bibliography{bibfile}
\bibliographystyle{johd}

\appendix

\section{Proof of Theorem \ref{main-theorem}}
\subsection{Preliminaries and Notations}\label{high-probability}

Since Algorithm \ref{SGA} corresponds to SGA on the reward function $\widehat{\mathsf{R}}^{(t)}_{\mathrm{even}}(W)/\|W\|_{\tF}^{k/2-2}$, we begin by analyzing the gradient of this function. Write the gradient at timestep $t$ as 
\begin{equation}\label{eq:gradient}
     \begin{aligned}
     	G^{(t)} :=& \nabla_W \left[\frac{1}{\left\|W\right\|_{\tF}^{\halfk-2}} \left\langle W^{\otimes \halfk}, \lambda v_\star^{\otimes k} + \mathbf{E}^{(t+1)}\right\rangle \right]\Bigg|_{W=\Wt}
     	\\
     	=& \frac{\lambda}{2} \left[  \frac{k\cdot \left(v_\star^\top \Wt v_\star\right)^{\halfk-1} }{\left\|\Wt\right\|_{\tF}^{\halfk-2}}\left(v_\star v_\star^\top\right) - \frac{(k-4)(v_\star^\top \Wt v_\star)^{\halfk}}{\left\|\Wt\right\|_{\tF}^{\halfk}} \Wt\right] +  {E}^{(t+1)},
     \end{aligned}
\end{equation} 
where $E^{(t+1)} \in \mathbb{R}^{d\times d}$ is a matrix dependent on both $\Wt$ and $\mathbf{E}^{(t+1)}$ and satisfies
\begin{equation}\label{def-E}
    \begin{aligned}
    	\left\langle E^{(t+1)}, Q \right\rangle &= \frac{1}{\left\|\Wt\right\|_{\tF}^{\halfk-2}} \left\langle\left.\nabla_W \left\langle W^{\otimes \halfk}, \mathbf{E}^{(t+1)} \right\rangle\right|_{W=\Wt}, Q\right\rangle
    	\\
    	&\qquad \qquad - \frac{(k-4)}{2\left\|\Wt\right\|_{\tF}^{\halfk}} \left\langle \left[\Wt\right]^{\otimes\halfk}, \mathbf{E}^{(t+1)}\right\rangle \left\langle W^{(t)}, Q\right\rangle.
    \end{aligned}
\end{equation}

Observe that for fixed $Q$ and $\Wt$, 
\begin{align*}
     \left\langle \left.\nabla_{W}\left\langle W^{\otimes \halfk}, \mathbf{E}^{(t+1)} \right\rangle\right|_{W=\Wt}, Q\right\rangle &= \frac{d}{d\xi} \left\langle \left[\Wt+\xi Q\right]^{\otimes \halfk}, \mathbf{E}^{(t+1)}\right\rangle\Bigg|_{\xi=0} \\
     &= \left\langle \sum_{l=1}^{\halfk} \left[\Wt\right]^{\otimes (l-1)} \otimes Q \otimes \left[\Wt\right]^{\otimes \halfk-l}, \mathbf{E}^{(t+1)}\right\rangle,
\end{align*} 
hence we can write $\left\langle E^{(t+1)}, Q\right\rangle$ linearly in $\mathbf{E}^{(t+1)}$ as
\begin{align}\label{eq:errort}
\begin{split}
    \left\langle E^{(t+1)}, Q \right\rangle &= \frac{1}{\left\|\Wt\right\|_{\tF}^{\halfk-2}} \sum_{l=1}^{\halfk} \left\langle \left[\Wt\right]^{\otimes (l-1)} \otimes Q \otimes \left[\Wt\right]^{\otimes \halfk-l}, \mathbf{E}^{(t+1)} \right\rangle \\
    &\qquad \qquad - \frac{(k-4)}{2\left\|\Wt\right\|_{\tF}^{\halfk}} \left\langle \left[\Wt\right]^{\otimes\halfk}, \mathbf{E}^{(t+1)}\right\rangle \left\langle W^{(t)}, Q\right\rangle.
\end{split}
\end{align}

We use the following metric to measure how close $W$ is to the unit vector $v$
\begin{align}
\label{eq:alpha}
    \alpha(v, W) := \frac{v ^\top W v}{\|W\|_{\tF}} = \left\langle \frac{W}{\|W\|_{\tF}}, v v^\top \right\rangle,
\end{align} which satisfies $\left|\alpha(v, W)\right|\le 1$. Throughout the proof, we will keep tracking the index \begin{align}
    \alphat := \alpha\left(v_\star, W^{(t)}\right) = \frac{v_\star^\top \Wt v_\star}{\left\|\Wt\right\|_{\tF}}.
\end{align}

We will use the following technical lemma to further represent $\alpha^{(t+1)}$ by $\alphat$ and some negligible error.

\begin{lemma} 
\label{lemma:2nd-expansion}
    Let $W$ and $Q$ be $d\times d$ matrices, $v$ be a $d$-dimensional unit vector, and $\eta>0$. We have
    \begin{align}\label{dynamic-alpha-linear}
        \alpha(v, W + \eta Q) = \alpha(v, W) + \eta \underbrace{\left\{ \frac{v^\top Q v}{\|W\|_{\tF}} - \frac{v^\top W v \times\langle W, Q\rangle}{\|W\|_{\tF}^3}\right\}}_{s(W, Q, v)} + \frac{\eta^2}{2} \Psi_1(W, Q, v, \bar{\eta})
    \end{align} 
    
    where $\bar{\eta} \in [0, \eta]$ and the residual $\Psi_1: \mathbb{R}^{d\times d} \times \mathbb{R}^{d\times d} \times \mathbb{R}^d \times \mathbb{R} \to \mathbb{R}$ is defined as 
    \begin{equation}\label{psi-1}
        \begin{aligned}
        	\Psi_1(W, Q, v, \eta) &= - 2 \frac{(v^\top Q v) \cdot \left(\langle W, Q\rangle + \eta \|Q\|_{\tF}^2\right)}{\|W + \eta Q\|_{\tF}^3} - \frac{(v^\top W v + \eta v^\top Q v) \|Q\|_{\tF}^2}{\|W + \eta Q\|_{\tF}^3} \\
        	&\qquad\qquad + 3 \frac{(v^\top W v + \eta v^\top Q v) \left(\langle W, Q\rangle + \eta \|Q\|_{\tF}^2\right)^2}{\|W + \eta Q\|_{\tF}^5}.
        \end{aligned}
    \end{equation} Moreover, when $k>4$, we further have
    \begin{equation}\label{dynamic-alpha-poly}
    	\begin{aligned}
    		\left[\alpha(v, W + \eta Q)\right]^{-(\halfk-2)} &= [\alpha(v, W)]^{-(\halfk-2)} - \frac{\eta(k-4)}{2} [\alpha(v, W)]^{-(\halfk-1)} s(W, Q, v) \\
    		& \qquad \qquad + \frac{\eta^2(k-4)(k-2)}{8} [\alpha(v, W+\bar{\eta} Q)]^{-\halfk} \Psi_2(W, Q, v, \bar{\eta}) \\
    		& \qquad \qquad - \frac{\eta^2 (k-4)}{4} [\alpha(v, W+\bar{\eta} Q)]^{-(\halfk-1)} \Psi_1(W, Q, v, \bar{\eta}),
    	\end{aligned} 
    \end{equation}
    where $\Psi_2: \mathbb{R}^{d\times d} \times \mathbb{R}^{d\times d} \times \mathbb{R}^d \times \mathbb{R} \to \mathbb{R}$ is defined as 
    \begin{equation}\label{psi-2}
        \Psi_2(W, Q, v, \eta) = \left(\frac{v^\top Q v}{\|W + \eta Q\|_{\tF}} - \frac{\left(v^\top W v + \eta v^\top Q v\right)(\langle W, Q\rangle + \eta \|Q\|_{\tF}^2)}{\|W + \eta Q\|_{\tF}^3}\right)^2.
    \end{equation}
\end{lemma}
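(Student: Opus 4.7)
The plan is to treat both identities as one-dimensional Taylor expansions with Lagrange remainder, with the matrix and vector quantities serving merely as coefficients. Define the scalar function
\[
\phi(\xi) \;:=\; \alpha(v,\, W+\xi Q) \;=\; \frac{v^\top (W+\xi Q) v}{\|W+\xi Q\|_{\tF}}, \qquad \xi \in [0,\eta].
\]
The quotient rule gives $\phi'(\xi)$ in closed form: the numerator differentiates to $v^\top Q v$, while $\frac{d}{d\xi}\|W+\xi Q\|_{\tF}=(\langle W,Q\rangle+\xi\|Q\|_{\tF}^2)/\|W+\xi Q\|_{\tF}$ by the chain rule applied to $\sqrt{\langle W+\xi Q,W+\xi Q\rangle}$. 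Evaluating at $\xi=0$ yields $\phi'(0)=s(W,Q,v)$, which handles the linear term of \eqref{dynamic-alpha-linear}.

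Next I would differentiate $\phi'(\xi)$ once more. Writing $\phi'(\xi)$ as the difference of the two fractions displayed in the definition of $\Psi_2$, I apply the quotient rule to each piece; the second piece is a triple product that I handle by treating it as $AB/C$ with $A=v^\top(W+\xi Q)v$, $B=\langle W,Q\rangle+\xi\|Q\|_{\tF}^2$, $C=\|W+\xi Q\|_{\tF}^3$, using $C'=3\|W+\xi Q\|_{\tF}\cdot B$. Collecting the three resulting terms shows, after rearrangement, that $\phi''(\xi)$ equals exactly the right-hand side of \eqref{psi-1}, i.e.\ $\phi''(\xi)=\Psi_1(W,Q,v,\xi)$. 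Taylor's theorem with Lagrange remainder then gives \eqref{dynamic-alpha-linear} for some $\bar\eta\in[0,\eta]$.

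For \eqref{dynamic-alpha-poly}, I set $\psi(\xi):=\phi(\xi)^{-(k/2-2)}$ and apply the chain rule twice:
\[
\psi'(\xi)=-\tfrac{k-4}{2}\,\phi(\xi)^{-(k/2-1)}\phi'(\xi),
\]
\[
\psi''(\xi)=-\tfrac{k-4}{2}\,\phi(\xi)^{-(k/2-1)}\phi''(\xi)+\tfrac{(k-4)(k-2)}{4}\,\phi(\xi)^{-k/2}\bigl[\phi'(\xi)\bigr]^2.
\]
By construction $[\phi'(\xi)]^2=\Psi_2(W,Q,v,\xi)$ (compare with \eqref{psi-2}) and, as established above, $\phi''(\xi)=\Psi_1(W,Q,v,\xi)$. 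Evaluating $\psi'(0)$ gives the claimed first-order coefficient $-\tfrac{k-4}{2}[\alpha(v,W)]^{-(k/2-1)} s(W,Q,v)$, and applying Taylor's theorem to $\psi$ with Lagrange remainder at some $\bar\eta\in[0,\eta]$ yields the two second-order residual terms in \eqref{dynamic-alpha-poly}, with coefficients $\tfrac{\eta^2(k-4)(k-2)}{8}$ and $-\tfrac{\eta^2(k-4)}{4}$ after multiplying through by $\eta^2/2$.

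The argument is entirely deterministic and essentially a bookkeeping exercise, so there is no real obstacle beyond correctly tracking the algebra; the only point that warrants care is verifying that the derivative of $\|W+\xi Q\|_{\tF}^3$ produces exactly the cubic denominator appearing in the $3AB^2/\|W+\xi Q\|_{\tF}^5$ term of $\phi''$, which is what makes $\phi''$ match $\Psi_1$ on the nose. Since $\phi$ is $C^\infty$ on any interval on which $W+\xi Q\neq 0$ (which we may assume whenever $\alpha(v,W+\bar\eta Q)$ appears in the statement and is therefore implicitly nonzero), Taylor's theorem with Lagrange remainder is directly applicable and completes both identities.
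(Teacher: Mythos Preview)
Your proposal is correct and follows essentially the same approach as the paper: define the scalar function $\phi(\xi)=\alpha(v,W+\xi Q)$, compute $\phi'$ and $\phi''$ explicitly (the paper's $f'$, $f''$ in its auxiliary Lemma~\ref{lemma-aux-frac}), identify $\phi''=\Psi_1$ and $(\phi')^2=\Psi_2$, and apply Taylor's theorem with Lagrange remainder first to $\phi$ and then to $\psi=\phi^{-(k/2-2)}$ via the chain rule. The only cosmetic difference is that the paper records the two Lagrange points with separate symbols ($\tau$ and $\gamma$), whereas the lemma statement overloads $\bar\eta$ for both; your treatment handles this correctly.
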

Recall $G^{(t)}$ (defined in Eq.~\eqref{eq:gradient}) denotes the stochastic gradient of the risk function $\calR$ with respect to the parameters $W$ at iteration $t$. Leveraging the structural properties of $\Gt$, we analyze the dynamics of the reference variables by bifurcating our analysis into two regimes: $k=4$ and $k>4$.

For $k=4$, one has
\begin{align*}
    s\left(\Wt,G^{(t)},v_*\right)=\frac{\lambda k}{2} \left[\alphat\right]^{\halfk-1} \left\{ 1 - \left[\alphat\right]^2\right\}+ \frac{1}{\left\|\Wt\right\|_{\tF}} \left\langle E^{(t+1)}, v_\star v_\star^\top - \alphat \frac{\Wt}{\left\|\Wt\right\|_{\tF}} \right\rangle
\end{align*} combining the update rule $\Wtn = \Wt + \etat \Gt$ and the 
first part of Lemma \ref{lemma:2nd-expansion} with $v = v_\star$, $W = \Wt$, $Q = \Gt$ and $\eta= \etat$ gives
\begin{align}\label{dynamic-tialpha-original}
\begin{split}
    \alpha^{(t+1)} &= \alphat + \frac{\etat \lambda k}{2} \left[\alphat\right]^{\halfk-1} \left\{ 1 - \left[\alphat\right]^2\right\} \\
    & \qquad \qquad + \frac{\etat}{\left\|\Wt\right\|_{\tF}} \left\langle E^{(t+1)}, v_\star v_\star^\top - \alphat \frac{\Wt}{\left\|\Wt\right\|_{\tF}} \right\rangle + \frac{\left[\etat\right]^2}{2} \Psi_1\left(\Wt, \Gt, v_\star, \bar{\eta}^{(t)}\right).
\end{split}
\end{align} with $\bar{\eta}^{(t)} \in [0, \etat]$ being dependent on $(\Wt, \Gt, \etat)$.

For $k>4$, combining the update rule and the second part of Lemma \ref{lemma:2nd-expansion} yields
\begin{equation}\label{dynamic-tialpha-poly}
	\small
	\begin{aligned}
		\left[\alpha^{(t+1)}\right]^{-(\halfk-2)} &= \left[\alphat\right]^{-(\halfk-2)} - \frac{\etat\lambda k(k-4)}{4}\left\{1-\left[\alphat\right]^2\right\}
		\\
		&\qquad\qquad-\frac{\etat(k-4)}{2}\cdot\left[\alphat\right]^{-\frac{k}{2}+1}\cdot\left[\frac{1}{\left\|\Wt\right\|_{\tF}}\llangle E^{(t+1)},v_*v_*^{\top}\rrangle-\frac{\alphat}{\left\|\Wt\right\|_{\tF}^2}\llangle E^{(t+1)},\Wt\rrangle\right]
		\\
		&\qquad\qquad+\frac{\left[\etat\right]^2(k-4)(k-2)}{8} \left[\alpha\left(v_*, \Wt+\bar{\eta}^{(t)} G^{(t)}\right)\right]^{-\halfk} \Psi_2\left(\Wt, G^{(t)}, v_*, \bar{\eta}^{(t)}\right)
		\\
		&\qquad\qquad-\frac{\left[\etat\right]^2 (k-4)}{4} \left[\alpha\left(v_*, \Wt+\bar{\eta}^{(t)}G^{(t)}\right)\right]^{-(\halfk-1)}\Psi_1\left(\Wt, G^{(t)}, v_*, \bar{\eta}^{(t)}\right)
	\end{aligned}
\end{equation}

We need a high-probability bound of the noise $\mathbf{E}^{(t+1)}$ that is adopted throughout the proof.
\begin{lemma}\label{lemma:high-prob}
    For any $\delta>0$, the following event
    \begin{align*}
        \mathcal{A}^{(t+1)}(\delta) = &\bigcap_{i,j\in [d]}\left\{\left| E^{(t+1)}_{i,j} \right| \le \sqrt{2\cst_4} \left\{\left\|W^{(t)}\right\|_{\tF} + \left|W_{i,j}^{(t)}\right| \right\} \right\}\bigcap\left\{\left|\llangle E^{(t+1)},W^{(t)}\rrangle\right|\leq\sqrt{2\cst_4}\left\|W^{(t)}\right\|_{\tF}^2\right\}
        \\
        & \qquad \qquad \bigcap\left\{\left|\llangle E^{(t+1)},v_*v_*^{\top}\rrangle\right|\leq\sqrt{\cst_4}\left(\left\|W^{(t)}\right\|_{\tF}+\left|\llangle v_*v_*^{\top},W^{(t)}\rrangle\right|\right)\right\}
    \end{align*} satisfies $\mathbb{P}\left[\bigcap_{t=0}^{T-1}\mathcal{A}^{(t+1)}(\delta)\right] \ge 1 - \delta$, where 
    \begin{align}
        \cst_4 = \sigma k\log^{\frac{1}{2}} (kTd^2/\delta).
    \end{align}
\end{lemma}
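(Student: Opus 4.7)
The plan is to view each of the three quantities to be controlled as a linear functional of the fresh sub-Gaussian noise $\mathrm{flat}(\mathbf{E}^{(t+1)})$ and then apply Assumption~\textbf{[A$_2$]} conditionally on the past. Define $\mathcal{F}_t=\sigma\bigl(\mathbf{E}^{(1)},\ldots,\mathbf{E}^{(t)}\bigr)$; by Assumption~\textbf{[A$_1$]} the iterate $W^{(t)}$ is $\mathcal{F}_t$-measurable and $\mathbf{E}^{(t+1)}$ is independent of $\mathcal{F}_t$. Reading off Eq.~\eqref{eq:errort}, for every $\mathcal{F}_t$-measurable test matrix $Q\in\mathbb{R}^{d\times d}$ we have $\langle E^{(t+1)},Q\rangle=\langle\mathrm{flat}(\mathbf{T}_Q^{(t)}),\mathrm{flat}(\mathbf{E}^{(t+1)})\rangle$ with coefficient
\begin{equation*}
\mathbf{T}_Q^{(t)}=\frac{1}{\|W^{(t)}\|_{\tF}^{k/2-2}}\sum_{l=1}^{k/2}\bigl[W^{(t)}\bigr]^{\otimes(l-1)}\!\otimes Q\otimes\bigl[W^{(t)}\bigr]^{\otimes(k/2-l)}-\frac{(k-4)\langle W^{(t)},Q\rangle}{2\|W^{(t)}\|_{\tF}^{k/2}}\bigl[W^{(t)}\bigr]^{\otimes k/2},
\end{equation*}
so Assumption~\textbf{[A$_2$]} yields the conditional tail $\mathbb{P}\bigl[|\langle E^{(t+1)},Q\rangle|>\tau\mid\mathcal{F}_t\bigr]\le 2\exp\bigl(-\tau^2/(4\sigma^2\|\mathbf{T}_Q^{(t)}\|_{\tF}^2)\bigr)$.

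The substantive step is bounding $\|\mathbf{T}_Q^{(t)}\|_{\tF}$ for each relevant $Q$. Each tensor $[W^{(t)}]^{\otimes(l-1)}\otimes Q\otimes[W^{(t)}]^{\otimes(k/2-l)}$ has Frobenius norm exactly $\|W^{(t)}\|_{\tF}^{k/2-1}\|Q\|_{\tF}$ (the $Q$-slot contributes $\|Q\|_{\tF}$ and the remaining $k/2-1$ slots each contribute $\|W^{(t)}\|_{\tF}$), while the second term has Frobenius norm $\tfrac{k-4}{2}|\langle W^{(t)},Q\rangle|$. For $Q=e_ie_j^\top$ the triangle inequality delivers $\|\mathbf{T}_Q^{(t)}\|_{\tF}\lesssim k\bigl(\|W^{(t)}\|_{\tF}+|W^{(t)}_{i,j}|\bigr)$, and $Q=v_*v_*^\top$ gives the analogous bound $\|\mathbf{T}_Q^{(t)}\|_{\tF}\lesssim k\bigl(\|W^{(t)}\|_{\tF}+|\langle v_*v_*^\top,W^{(t)}\rangle|\bigr)$. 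For $Q=W^{(t)}$ a cancellation occurs: every summand in the first piece equals $[W^{(t)}]^{\otimes k/2}/\|W^{(t)}\|_{\tF}^{k/2-2}$, and its scalar coefficient reduces to $\tfrac{k}{2}-\tfrac{k-4}{2}=2$, giving the sharp (and $k$-independent) bound $\|\mathbf{T}_{W^{(t)}}^{(t)}\|_{\tF}=2\|W^{(t)}\|_{\tF}^2$.

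To conclude, I would, for each $t\in\{0,\ldots,T-1\}$ and each of the $d^2+2$ test matrices $Q$, pick $\tau$ so that the conditional failure probability is at most $\delta/(T(d^2+2))$. This absorbs a $\sqrt{\log(kTd^2/\delta)}$ factor into the constant and produces exactly the $\cst_4=\sigma k\log^{1/2}(kTd^2/\delta)$-type coefficient multiplying the $\|W^{(t)}\|_{\tF}$-plus-coordinate factor stated in each event. A sequential tower-property union bound over $t$, together with a union bound over the $d^2+2$ directions at each $t$, then yields $\mathbb{P}\bigl[\bigcap_{t=0}^{T-1}\mathcal{A}^{(t+1)}(\delta)\bigr]\ge 1-\delta$.

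The main obstacle, though not deep, is the bookkeeping around the $Q=W^{(t)}$ event: the two pieces of $\mathbf{T}_Q^{(t)}$ interfere rather than merely obey the triangle inequality, and one must exploit the exact cancellation $\tfrac{k}{2}-\tfrac{k-4}{2}=2$ to avoid a $k$-dependent blow-up in the stated bound. The remaining work is purely mechanical: verify that the $\mathcal{F}_t$-measurability of each test matrix legitimises the conditional sub-Gaussian step, and then assemble the per-iteration bounds into the claimed joint event.
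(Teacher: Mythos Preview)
Your proposal is correct and follows the same overall strategy as the paper: represent $\langle E^{(t+1)},Q\rangle$ as a linear functional of the fresh noise $\mathbf{E}^{(t+1)}$, invoke Assumption~\textbf{[A$_2$]} conditionally on $\mathcal{F}_t$, and union-bound over time and test directions.

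There is one minor methodological difference worth noting. The paper decomposes $\langle E^{(t+1)},Q\rangle$ into the $k/2+1$ individual pieces $Z_0,\ldots,Z_{k/2}$, applies the sub-Gaussian tail to each $Z_l$ separately (incurring an extra factor $k$ in the union bound), and only afterwards sums the bounds via the triangle inequality. You instead keep $\langle E^{(t+1)},Q\rangle$ as a single linear functional and bound the coefficient tensor $\mathbf{T}_Q^{(t)}$ directly. Your route is slightly tighter: in particular, for $Q=W^{(t)}$ you exploit the exact cancellation $\tfrac{k}{2}-\tfrac{k-4}{2}=2$ to obtain a $k$-independent coefficient, whereas the paper's piecewise approach yields a constant scaling like $k$. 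Both land inside the stated $\sqrt{2\cst_4}$ envelope, so the difference is cosmetic here, but your observation about the cancellation is the cleaner way to see why the $\langle E^{(t+1)},W^{(t)}\rangle$ bound does not carry an additional factor of $k$.
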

Furthermore, given the matrix parameter $\Wt$
at iteration $t$, we also need to ensure that, under the truncation of event $\calA^{(t+1)}(\delta)$, the conditional expectation of the inner product $\llangle E^{(t+1)}\cdot\mathds{1}_{\calA^{(t+1)}(\delta)},v_*v_*^{\top}\rrangle$ and $\llangle E^{(t+1)}\cdot\mathds{1}_{\calA^{(t+1)}(\delta)},W^{(t)}\rrangle$ are vanishingly small.
\begin{lemma}\label{control-expectation}
	Given constant $\tau\in\bbR_+$, letting $\delta\lesssim\left(\frac{\tau}{\sigma kd}\right)^4$, we have
	\begin{align}
		&\left|\bbE_t\left[\frac{1}{\left\|W^{(t)}\right\|_{\tF}}\llangle E^{(t+1)}\cdot\mathds{1}_{\calA^{(t+1)}(\delta)},v_*v_*^{\top}\rrangle\right]\right|\leq\tau,\label{tech-lemma-2-1}
		\\
		&\left|\bbE_t\left[\frac{1}{\left\|W^{(t)}\right\|_{\tF}^2}\llangle E^{(t+1)}\cdot\mathds{1}_{\calA^{(t+1)}(\delta)},W^{(t+1)}\rrangle\right]\right|\leq\tau,\label{tech-lemma-2-2}
	\end{align}
\end{lemma}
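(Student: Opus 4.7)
The plan is to exploit two facts: first, $E^{(t+1)}$ has zero conditional mean, so the indicator $\mathds{1}_{\mathcal{A}^{(t+1)}(\delta)}$ is the sole source of the bias to be bounded; and second, the normalized noise functionals appearing in (\ref{tech-lemma-2-1})--(\ref{tech-lemma-2-2}) are, conditional on $\mathcal{F}_t$, sub-Gaussian with parameter $O(\sigma k)$ that does not grow in $d$. Combining these via H\"older's inequality with a high-probability control of $\mathcal{A}^{(t+1)}(\delta)^c$ yields the desired bound.

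\textbf{Centering and H\"older split.} By the linear representation (\ref{eq:errort}) and $\mathbb{E}_t[\mathbf{E}^{(t+1)}] = 0$ from Assumption~[A$_1$], one has $\mathbb{E}_t[\langle E^{(t+1)}, Q\rangle] = 0$ for any $\mathcal{F}_t$-measurable $Q$. Writing $\mathds{1}_{\mathcal{A}^{(t+1)}(\delta)} = 1 - \mathds{1}_{\mathcal{A}^{(t+1)}(\delta)^c}$, both (\ref{tech-lemma-2-1}) and (\ref{tech-lemma-2-2}) reduce to controlling $\|W^{(t)}\|_{\tF}^{-s}\bigl|\mathbb{E}_t[\langle E^{(t+1)} \mathds{1}_{\mathcal{A}^{(t+1)}(\delta)^c}, Q\rangle]\bigr|$ for $(Q, s) \in \{(v_\star v_\star^\top, 1),\,(W^{(t)}, 2)\}$. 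Applying H\"older with conjugate exponents $(4/3, 4)$,
\begin{equation*}
\Bigl|\mathbb{E}_t\bigl[\langle E^{(t+1)} \mathds{1}_{\mathcal{A}^{(t+1)}(\delta)^c}, Q\rangle\bigr]\Bigr| \le \bigl(\mathbb{E}_t[|\langle E^{(t+1)}, Q\rangle|^{4/3}]\bigr)^{3/4} \bigl(\mathbb{P}_t(\mathcal{A}^{(t+1)}(\delta)^c)\bigr)^{1/4},
\end{equation*}
so it remains to bound each factor. The choice $q = 4$ is tailored to produce the $\delta^{1/4}$ scaling appearing in the hypothesis.

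\textbf{Moment and probability factors.} For the moment factor, I would express each normalized inner product as $\langle \tilde u,\, \expan(\mathbf{E}^{(t+1)})\rangle$ and bound $\|\tilde u\|$. The sum over $l$ in (\ref{eq:errort}) contributes $k/2$ tensors, each having (after normalization by $\|W^{(t)}\|_{\tF}^{k/2-1}$ in the $Q = v_\star v_\star^\top$ case or $\|W^{(t)}\|_{\tF}^{k/2}$ in the $Q = W^{(t)}$ case) Frobenius norm $1$, using $\|v_\star v_\star^\top\|_{\tF} = 1$ in one case and the trivial simplification $[W^{(t)}]^{\otimes(l-1)}\otimes W^{(t)}\otimes[W^{(t)}]^{\otimes k/2-l} = [W^{(t)}]^{\otimes k/2}$ in the other. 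The correction term contributes $O(1)$ via $|\langle W^{(t)}, v_\star v_\star^\top\rangle| \le \|W^{(t)}\|_{\tF}$. Hence $\|\tilde u\| \lesssim k$, and Assumption~[A$_2$] gives conditional sub-Gaussianity with parameter $\lesssim \sigma k$, so the $L^{4/3}$-norm is $\lesssim \sigma k$ by standard moment bounds (or by Jensen and Corollary~\ref{corollary-estimation-second-moment}). The probability factor is handled by Lemma~\ref{lemma:high-prob}: a union bound over the $O(d^2)$ sub-Gaussian tail events shows $\mathbb{P}_t(\mathcal{A}^{(t+1)}(\delta)^c) \lesssim \delta$, and since the truncation threshold $\mathsf{c}_4$ depends only logarithmically on $\delta$, one can apply the lemma at a much smaller level (a polynomial of $\delta/d$) with only polylog inflation of $\mathsf{c}_4$. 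Combining the two factors yields a bound $\lesssim \tau$ under the hypothesis $\delta \lesssim (\tau/(\sigma k d))^4$.

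\textbf{Main obstacle.} The technical crux is the dimension-free sub-Gaussian moment bound: unwinding (\ref{eq:errort}) for both $Q = v_\star v_\star^\top$ and $Q = W^{(t)}$, verifying that the denominators $\|W^{(t)}\|_{\tF}^s$ eliminate all $W^{(t)}$-dependence from the coefficient vector $\tilde u$, and showing that the $k/2$ terms in the sum together with the correction term produce a coefficient of norm only $O(k)$ rather than $O(k\|W^{(t)}\|_{\tF})$ or worse. Once this step is in place, the remaining work is careful bookkeeping of polynomial-in-$d$ factors to match the stated requirement on $\delta$.
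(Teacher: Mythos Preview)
Your approach is correct and takes a genuinely different---and cleaner---route than the paper. The paper does not use a single H\"older split; instead it decomposes $\mathcal{A}^{(t+1)}(\delta)$ into its three constituent sub-events $\mathcal{A}_1\cap\mathcal{A}_{2,1}\cap\mathcal{A}_{2,2}$ and bounds the resulting pieces separately via distinct auxiliary lemmas (a one-dimensional sub-Gaussian truncation bound, a Cauchy--Schwarz step passing through $\|E^{(t+1)}\|_{\tF}$, and a sup-times-probability bound). The second piece is where the factor~$d$ enters the paper's argument: controlling $\langle E^{(t+1)},v_\star v_\star^\top\rangle$ via the full Frobenius norm $\|E^{(t+1)}\|_{\tF}$ costs a factor~$d$, which is precisely what forces the $d^4$ in the stated hypothesis on~$\delta$. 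Your argument avoids this entirely by never touching the matrix norm of $E^{(t+1)}$: you work directly with the scalar functional $\langle E^{(t+1)},Q\rangle/\|W^{(t)}\|_{\tF}^s$, whose conditional sub-Gaussian parameter is $O(\sigma k)$ uniformly in~$d$, as your ``main obstacle'' paragraph correctly outlines (the $k/2$ unit-norm tensors plus the $(k-4)/2$ correction give $\|\tilde u\|\le k$). Consequently your bound actually goes through under the weaker hypothesis $\delta\lesssim(\tau/(\sigma k))^4$, so certainly under the stated one. Two minor notes: the conditional bound $\mathbb{P}_t(\mathcal{A}^{(t+1)}(\delta)^c)\lesssim\delta/T$ already follows from the per-event tail estimates inside the proof of Lemma~\ref{lemma:high-prob}, so no rescaling of the truncation level is needed; and the $W^{(t+1)}$ in \eqref{tech-lemma-2-2} is evidently a typo for $W^{(t)}$ (consistent with how the lemma is invoked throughout), which you correctly read.
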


Our subsequent analysis is conditioned on the event $\mathcal{E}_0 := \bigcap_{t=0}^{T-1}\mathcal{A}^{(t+1)}(\delta)$, which occurs with probability at least $1-\delta/2$ by Lemma \ref{lemma:high-prob}. Within this conditional probability space, the iteration of $\Wt$ proceeds as follows:
\begin{equation}\label{SGD-tensor-PCA-control-sequence}
 	\begin{split}
 		W^{(t+1)}=&\Wt+\frac{\etat\lambda k}{2}\cdot\frac{\langle v_*,\Wt v_*\rangle^{\frac{k}{2}-1}}{\left\|\Wt\right\|_{\text{F}}^{\frac{k}{2}-2}}\cdot v_* v_*^{\top}-\frac{\etat\lambda(k-4)}{2}\cdot\frac{\llangle v_*,\Wt v_*\rrangle^{\frac{k}{2}}}{\left\|\Wt\right\|_{\text{F}}^{\frac{k}{2}}}\cdot\Wt
 		\\
 		&\quad\quad+\etat E^{(t+1)}\cdot\mathds{1}_{\calA^{(t+1)}(\delta)},
 		\\
 		=&\Wt+\etat G^{(t)}.
 	\end{split}
\end{equation}
To streamline notation, we denote $E^{(t+1)}\cdot\mathds{1}_{\calA^{(t+1)}(\delta)}$ simply by $E^{(t+1)}$ throughout the subsequent analysis. We use specific $\cst_r$ to denote constants having polynomial dependency on $k, \sigma, \log(T), \log(1/\delta)$, and $\log(d)$, where each index $r$ refers to a unique constant. 

\subsection{Two Phases and the Proof}

Our analysis involves two phases: during the first phase $t\in [T_1]$ with $T_1 = \lfloor T / \log(T) \rfloor$, the index $\alphat$ maintains above $(1+1/k)^{-1}d^{-1/2}$ and will satisfies $\alpha^{(T_1)} \ge 1 - \epsilon$ for some small $\epsilon>0$ in the end. The final convergence rate will be established in the second phase. 

We first present the result during the first phase.

\begin{theorem}\label{thm-phase-I-tensor-PCA}
    Assume $d\geq\Omega(k)$ and $\lambda\leq\calO\left(d^{k/4}\right)$. Under Assumption \ref{ass-2} with $\sigma\geq\Omega(1)$, consider the dynamic generated via Algorithm \ref{SGA} with initialization $W^{(0)}=I_d$. For any $0<\delta<1$ and $0<\epsilon<1$, if we pick 
    \begin{align*}
        \frac{T_1}{\lceil \log(T_1)\rceil}\geq\frac{2097152\left(2+\log(\sigma kdT/\delta)\right)e^2\cst_4d^{\frac{k}{2}}}{\lambda^2\epsilon^2(1-\epsilon)^{\frac{k}{2}}\max\left\{k(k-4),\log^{-1}(d)\right\}},
    \end{align*} 
    and
    \begin{align*}
        \forall t\in [T_1], ~~~ \eta^{(t)} = \eta_0 = \frac{16d^{\frac{k}{4}-1}}{\lambda\epsilon\max\left\{k(k-4),\log^{-1}(d)\right\}T_1}.
    \end{align*}
    Then $\alpha^{(T_1)} \ge 1 - \epsilon$ with probability at least $1-\delta/2$. 
\end{theorem}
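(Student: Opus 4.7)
The plan is to work throughout on the high-probability event $\mathcal{E}_0 := \bigcap_{t=0}^{T_1-1}\mathcal{A}^{(t+1)}(\delta)$ from Lemma \ref{lemma:high-prob}, which has probability at least $1-\delta/2$ and delivers a pointwise envelope on the noise matrix $E^{(t+1)}$ and on $\langle E^{(t+1)}, v_*v_*^\top\rangle$, $\langle E^{(t+1)}, W^{(t)}\rangle$. On this event I substitute the update \eqref{SGD-tensor-PCA-control-sequence} into the negative-power identity \eqref{dynamic-tialpha-poly} (and into the first-order identity \eqref{dynamic-tialpha-original} when $k=4$) to rewrite the dynamics of $\alpha^{(t)}$ compactly as
\begin{align*}
    \bigl[\alpha^{(t+1)}\bigr]^{-(k/2-2)} = \bigl[\alpha^{(t)}\bigr]^{-(k/2-2)} - \eta_0 D\bigl(\alpha^{(t)}\bigr) - \eta_0 X^{(t+1)} + \eta_0^2 R^{(t+1)},
\end{align*}
where $D(\alpha) = \tfrac{\lambda k(k-4)}{4}(1-\alpha^2) \ge 0$ is the monotone signal drift, $X^{(t+1)}$ is (after applying Lemma \ref{control-expectation} to discard the truncation bias) essentially a zero-mean martingale increment, and the second-order residual $R^{(t+1)}$ collects the $\Psi_1,\Psi_2$ contributions from \eqref{psi-1}--\eqref{psi-2}.

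The argument then splits into two pieces. First, I establish a uniform lower bound $\alpha^{(t)} \ge (1+1/k)^{-1} d^{-1/2}$ for every $t\in[T_1]$, equivalently $[\alpha^{(t)}]^{-(k/2-2)} \le \sqrt{e}\cdot d^{(k-4)/4}$. Since $D\ge 0$ only drives $[\alpha^{(t)}]^{-(k/2-2)}$ downwards, the only mechanism that could violate this bound is an adverse cumulative noise or residual excursion. Using the per-step envelope on $E^{(t+1)}$ from Lemma \ref{lemma:high-prob} together with the conditional second-moment bound of Corollary \ref{corollary-estimation-second-moment}, a Freedman-type martingale tail inequality combined with a union bound over $t\in[T_1]$ will bound $\eta_0\max_{t\le T_1}\bigl|\sum_{s<t}X^{(s+1)}\bigr|$ and $\eta_0^2\sum_{s<t}|R^{(s+1)}|$ by a constant fraction of $d^{(k-4)/4}$, provided $T_1/\log(T_1)$ satisfies the sample-size requirement in the statement.

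Second, for the near-one guarantee I exploit the constant-rate decay of $[\alpha^{(t)}]^{-(k/2-2)}$. Whenever $\alpha^{(t)} \le 1-\epsilon$ one has $1 - [\alpha^{(t)}]^2 \ge \epsilon$ and hence $D(\alpha^{(t)}) \ge \lambda k(k-4)\epsilon/4$. Telescoping, the variable cannot remain in the regime $\{\alpha\le 1-\epsilon\}$ for more than $T^\star := 4d^{(k-4)/4}/(\eta_0\lambda k(k-4)\epsilon)$ steps without $[\alpha^{(t)}]^{-(k/2-2)}$ becoming negative (impossible); substituting the prescribed $\eta_0 = 16 d^{k/4-1}/(\lambda\epsilon k(k-4)T_1)$ yields $T^\star = T_1/4$, so a hitting time $\tau_\epsilon \le T_1/4$ exists with high probability. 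For the remaining $t\in[\tau_\epsilon,T_1]$ I will use a second martingale tail bound — combined with the fact that the drift always pushes $\alpha$ upward — to rule out a downward excursion of size larger than $\epsilon$, thereby yielding the last-iterate guarantee $\alpha^{(T_1)} \ge 1-\epsilon$.

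The main obstacle I expect is the simultaneous control of the martingale noise and the second-order residuals $\Psi_1,\Psi_2$ uniformly in $t\in[T_1]$. These residuals involve inverse powers of $\|W^{(t)}\|_{\tF}$ through \eqref{psi-1}--\eqref{psi-2}, so their magnitude is intertwined with the evolution of $\|W^{(t)}\|_{\tF}$; I anticipate a coupled induction that maintains $\|W^{(t)}\|_{\tF} \asymp \sqrt{d}$ alongside the $\alpha^{(t)}$ bounds, again relying on the envelope provided by Lemma \ref{lemma:high-prob}. The $k=4$ case requires a change of variable from $[\alpha^{(t)}]^{-(k/2-2)}$ to $\log\alpha^{(t)}$ but follows the same two-step template with drift $\lambda k(1-[\alpha^{(t)}]^2)/2$. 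Finally, the polylogarithmic smallness of $\delta$ demanded by Lemma \ref{control-expectation} to make the per-step truncation bias negligible against the drift is what ultimately generates the $\log^{7/2}$-factor in the sample-size requirement.
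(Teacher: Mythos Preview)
Your proposal has the same three-stage architecture as the paper's proof (Lemmas \ref{PCA-phase-I-upper-bound}, \ref{PCA-phase-I-lower-bound}, \ref{PCA-phase-I-lower-bound-last-iterate}): a uniform lower bound $\alpha^{(t)}\ge (1+1/k)^{-1}d^{-1/2}$, a hitting-time statement, and a last-iterate guarantee. There are, however, two genuine gaps in how you plan to execute steps (i) and (ii).

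\textbf{The circularity is misdiagnosed.} You locate the obstacle in controlling $\|W^{(t)}\|_{\tF}$, but the residuals $\Psi_1,\Psi_2$ in \eqref{psi-1}--\eqref{psi-2} are homogeneous of degree zero in $(W,G)$: since $G^{(t)}$ in \eqref{eq:gradient} scales linearly with $\|W^{(t)}\|_{\tF}$, the ratios are scale-invariant, and indeed Lemma \ref{estimation} bounds them without any hypothesis on $\|W^{(t)}\|_{\tF}$. The actual circularity lies elsewhere: in \eqref{dynamic-tialpha-poly} both the martingale increment and the second-order residuals are premultiplied by $[\alpha^{(t)}]^{-(k/2-1)}$ and $[\alpha(v_*,W^{(t)}+\bar\eta^{(t)}G^{(t)})]^{-k/2}$, which blow up exactly when $\alpha^{(t)}$ is small. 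Your Freedman-plus-union-bound scheme therefore cannot even be set up before step~(i) is established. The paper resolves this by a stopping-time coupling (Definition \ref{def-hatW}): freeze the process the instant $\hat\alpha^{(t)}$ drops below the threshold, so that Lemma \ref{estimation} applies to the coupled process at every step, and then bound the probability of each threshold-crossing pair $(t_1,t_2)$ separately via a supermartingale concentration (Lemma \ref{aux-martingale-concentration-subtraction}).

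\textbf{The hitting-time argument is not deterministic.} Your claim that ``$[\alpha^{(t)}]^{-(k/2-2)}$ becoming negative (impossible)'' ignores the noise: telescoping only yields $[\alpha^{(t)}]^{-(k/2-2)} \le d^{(k-4)/4} - t\eta_0 D_{\min} - \eta_0\sum_s X^{(s+1)} + \eta_0^2\sum_s R^{(s+1)}$, and the last two sums can cancel the drift. The paper handles this by constructing a second coupling (Definition \ref{def-breveW-1}) for which $[\breve\alpha^{(t)}]^{-(k/2-2)}+t\eta_0\epsilon\lambda k(k-4)/8$ is a genuine supermartingale, then applying a sub-Gaussian tail bound. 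Your last-iterate step (iii) would need the same treatment (cf.\ Lemma \ref{PCA-phase-I-lower-bound-last-iterate}, which again uses a threshold-crossing decomposition rather than a single maximal inequality).
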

The proof for the second phase comprises two integral parts. In \emph{Part I}, we demonstrate that $\left\{\Wt\right\}_{t=T_1}^T$, which stems from the output of the first phase, can guarantee that $\alpha^{(t)} (T_1\leq t\leq T)$ remain confirmed within the neighborhood of 1 with high probability.
\begin{lemma}\label{phase-II-high-probability}
	Suppose 
	$$
	\eta_0\leq\min\left\{\frac{\lambda\epsilon\left(1-3\epsilon/2\right)^{\frac{k}{2}-1} \left(k+4\log^{-1}(3T_1^2/\delta)\right)}{4096e^2\cst_4d^{\frac{k}{4}+1}},\, \, \frac{\lambda k\left(1-\epsilon\right)^{\frac{k}{2}}\epsilon^2}{128\cst_4\log\left(T^2/{\delta}\right)}\right\}.
	$$
	Under the setting of Theorem \ref{thm-phase-I-tensor-PCA}, we consider SGD iterates starting from step $T_1$ with initialization $\alpha^{(T_1)}>1-\frac{3\epsilon}{2}$. The joint event $\bigcap_{t=T_1}^{T} \ticalE\left(\alpha^{(t)}\right)$ holds with probability at least $1-\delta/2$, where 
	\begin{align}
		\ticalE\left(\alphat\right):=\left\{\alphat\in\left[1-\frac{3\epsilon}{2},1\right]\right\}.\notag
	\end{align}
\end{lemma}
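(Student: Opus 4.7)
The plan is to analyze the trajectory $\{\alpha^{(t)}\}_{t=T_1}^T$ conditionally on the high-probability event $\mathcal{E}_0 = \bigcap_{t=0}^{T-1}\mathcal{A}^{(t+1)}(\delta)$ introduced in Section \ref{high-probability}. Since $\alpha(v_*, W) = \langle v_* v_*^\top, W/\|W\|_{\tF}\rangle$ and $\|v_* v_*^\top\|_{\tF} = 1$, Cauchy--Schwarz yields $\alpha^{(t)} \leq 1$ automatically, so the task reduces to ruling out downward excursions below $1 - \tfrac{3\epsilon}{2}$. Introduce the stopping time
\begin{align*}
\tau := \inf\bigl\{t \geq T_1 : \alpha^{(t)} < 1 - \tfrac{3\epsilon}{2}\bigr\},
\end{align*}
so the lemma follows from showing $\tau > T$ with conditional probability at least $1 - \delta/2$.

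On the event $\{\tau > t\}$, I decompose one iterate using \eqref{dynamic-tialpha-original} when $k=4$, or by inverting \eqref{dynamic-tialpha-poly} when $k > 4$, into
\begin{align*}
\alpha^{(t+1)} - \alpha^{(t)} = D^{(t)} + \Delta M^{(t)} + b^{(t)} + R^{(t)},
\end{align*}
where $D^{(t)} \propto \eta^{(t)} \lambda k [\alpha^{(t)}]^{k/2-1}(1 - [\alpha^{(t)}]^2) \geq 0$ is the population drift, $\Delta M^{(t)}$ is the conditionally centered stochastic contribution, $b^{(t)}$ is the truncation bias controlled by Lemma \ref{control-expectation}, and $R^{(t)} = O\bigl([\eta^{(t)}]^2\bigr)$ is the second-order residual involving $\Psi_1$ (and $\Psi_2$ when $k>4$). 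On $\mathcal{E}_0 \cap \{\tau > t\}$ together with $\alpha^{(t)} \in [1-\tfrac{3\epsilon}{2}, 1]$, the concentration bounds in Lemma \ref{lemma:high-prob} and the explicit scaling by $1/\|W^{(t)}\|_{\tF}$ deliver the almost-sure increment bound $|\Delta M^{(t)}| \leq C\eta^{(t)}\sqrt{\mathsf{c}_4}$, while $|b^{(t)}| + |R^{(t)}|$ is deterministically small under the hypothesis on $\eta_0$.

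Next, I apply a maximal Azuma--Hoeffding inequality to the stopped martingale $S^{(t)} := \sum_{s=T_1}^{t-1}\Delta M^{(s)}\, \indicator_{\tau > s}$: with conditional probability at least $1 - \delta/2$,
\begin{align*}
\max_{T_1 \leq t \leq T}\bigl|S^{(t)}\bigr| \lesssim \sqrt{\mathsf{c}_4 \log(T/\delta)\sum_{s=T_1}^{T}[\eta^{(s)}]^2} \lesssim \eta_0 \sqrt{T_1\, \mathsf{c}_4 \log(T/\delta)},
\end{align*}
using the geometric step-size schedule at the last step. The two pieces of the minimum on $\eta_0$ in the hypothesis are calibrated so that (i) the first piece ensures the accumulated residual $\sum|R^{(s)}|$ sits below a small fraction of $\epsilon$, while (ii) the second piece ensures the Azuma deviation $\max|S^{(t)}|$ sits below a small fraction of $\epsilon$. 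Since $D^{(s)} \geq 0$ and the buffer $\alpha^{(T_1)} - (1-\tfrac{3\epsilon}{2})$ inherited from Theorem \ref{thm-phase-I-tensor-PCA} (which yields $\alpha^{(T_1)} \geq 1-\epsilon$) is at least $\epsilon/2$, combining these bounds gives $\alpha^{(t\wedge\tau)} \geq \alpha^{(T_1)} - \tfrac{\epsilon}{2} \geq 1 - \tfrac{3\epsilon}{2}$ throughout $[T_1, T]$, so $\tau > T$ on this joint event.

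The main obstacle is that the drift $D^{(t)}$ vanishes as $\alpha^{(t)} \uparrow 1$, preventing any per-step deterministic ``drift beats noise'' argument; instead the proof must leverage the \emph{aggregated} martingale fluctuation being small via Azuma concentration while retaining only non-negativity (not strict positivity) of the drift. A secondary delicacy is that the martingale increment scale $|\Delta M^{(t)}|$ enters through ratios involving $\|W^{(t)}\|_{\tF}$, so the bounded-differences hypothesis of Azuma must be verified uniformly on $\{\tau > t\}$; this is also why the two regimes $k=4$ and $k>4$ require different Lyapunov coordinates ($\alpha$ versus $[\alpha]^{-(k/2-2)}$), producing the two-piece minimum on $\eta_0$ that appears in the hypothesis.
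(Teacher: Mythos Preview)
Your decomposition and the maximal Azuma plan are natural, but the argument has a genuine gap: the second piece of the hypothesis, $\eta_0 \le \frac{\lambda k(1-\epsilon)^{k/2}\epsilon^2}{128\cst_4\log(T^2/\delta)}$, bounds $\eta_0$ but not $\eta_0\sqrt{T_1}$. Your Azuma deviation scales as $\sqrt{\cst_4\log(T/\delta)\sum_{s}[\eta^{(s)}]^2}\asymp \eta_0\sqrt{T_1\,\cst_4\log(T/\delta)}$, so claim (ii) does not follow from the stated hypothesis alone; you would need an extra condition of the form $\eta_0^2 T_1\lesssim \epsilon^2/(\cst_4\log(T/\delta))$, which the lemma does not provide. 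The same issue hits your accumulated residual $\sum_s|R^{(s)}|$, which is of order $\eta_0^2 T_1\cdot(\lambda^2k^2+\cst_4 d^2)$ and again is not controlled by the first piece of the hypothesis per se.

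What you are missing is precisely the mechanism the paper exploits: on the ``crossing strip'' $\alpha^{(t)}\in[1-\tfrac{3\epsilon}{2},\,1-\tfrac{\epsilon}{4}]$ the drift is \emph{strictly} positive and yields a contraction for $1-\alpha^{(t)}$, namely
\[
1-\alpha^{(t+1)}\le\Bigl(1-\tfrac{\etat\lambda k(1-\bar\epsilon)^{k/2}}{2}\Bigr)(1-\alpha^{(t)})-\etat\xi^{(t+1)},
\]
after the first piece of the hypothesis absorbs the $\Psi_1$-residual and the truncation bias \emph{per step} (this is exactly Eq.~\eqref{concentrate-dynamic-condition}). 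The contraction makes the effective variance for a crossing $\sum_j a_j^2\prod_{i>j}(1-c\eta_i)^2=O(\cst_4\eta_0)$ rather than $O(\cst_4\eta_0^2 T_1)$; applying Lemma~\ref{aux-martingale-concentration} then gives a per-pair crossing probability $\exp\{-c\,\epsilon^2/(\eta_0\cst_4)\}$, and a union bound over at most $T^2/2$ pairs $(t_1,t_2)$ is what produces the $\log(T^2/\delta)$ in the second piece of the hypothesis. In short, treating the drift as merely nonnegative throws away the factor-of-$T_1$ variance reduction that the contraction provides, and this is why your calibration of the two pieces of the hypothesis does not match their actual roles.

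A minor point: the split into two Lyapunov coordinates ($\alpha$ versus $[\alpha]^{-(k/2-2)}$) is used in Phase~1 (Lemmas~\ref{PCA-phase-I-upper-bound}--\ref{PCA-phase-I-lower-bound}), not here; in this lemma the paper works uniformly with the linear coordinate $1-\alpha^{(t)}$ for all even $k\ge 4$.
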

Lemma \ref{phase-II-high-probability} establishes that $\alphat\in\left[1 - \frac{3\epsilon}{2}, 1\right]$ with high probability for any $t \in [T_1:T]$. Leveraging this bounded interval and the recurrence relation of $\alphat$ in the second phase, we derive its convergence rate:
\begin{theorem}\label{phase-II-convergence-thm}
	Under the setting of Lemma \ref{phase-II-high-probability}, $\alpha^{(T)}$ satisfies the following bound
	\begin{align}
		\left(1-\alpha^{(T)}\right)^2\lesssim\left(1-\frac{\eta_0\lambda k\left(1-3\epsilon/2\right)^{\frac{k}{2}}}{2}\right)^{T_1}\frac{\epsilon^2}{\delta}+\frac{\lceil\log(T)\rceil\eta_{0}}{\lambda^2k^2(1-3\epsilon/2)^{k}\delta T^4}+\frac{\lceil\log(T)\rceil(\mathsf{c}_0^2k^2\sigma^4+\lambda^4k^4+\cst_4^2d^4)\eta_{0}}{\lambda^3k^3(1-3\epsilon/2)^{\frac{3k}{2}}\delta T},\notag
	\end{align}
	with probability at least $1-\delta$.
\end{theorem}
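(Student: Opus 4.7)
The plan is to condition throughout on the joint event $\bigcap_{t=T_1}^T \ticalE(\alphat)$, which by Lemma~\ref{phase-II-high-probability} holds with probability at least $1-\delta/2$ and confines every $\alpha^{(t)}$ to $[1-3\epsilon/2,\,1]$. On this event I would start from the $\alpha$-recursion obtained by combining the update~\eqref{SGD-tensor-PCA-control-sequence} with Lemma~\ref{lemma:2nd-expansion}, namely~\eqref{dynamic-tialpha-original} when $k=4$ and~\eqref{dynamic-tialpha-poly} when $k>4$. Using $1-[\alphat]^2=(1-\alphat)(1+\alphat)$ together with the lower bound $\alphat\ge 1-3\epsilon/2$, both recursions linearize into the common scalar form
\begin{equation*}
1-\alpha^{(t+1)} \;=\; \bigl(1-\eta^{(t)} c^{(t)}\bigr)\bigl(1-\alphat\bigr) \;-\; \eta^{(t)}\xi^{(t+1)} \;-\; \eta^{(t)}\rho^{(t)} \;+\; [\eta^{(t)}]^2 H^{(t)},
\end{equation*}
where $c^{(t)}\ge \tfrac{\lambda k}{2}(1-3\epsilon/2)^{k/2}$ is the deterministic contraction, $\xi^{(t+1)}$ is a zero-mean martingale increment linear in $E^{(t+1)}$, $\rho^{(t)}=\mathbb{E}_t\xi^{(t+1)}$ is the deterministic truncation bias controlled by Lemma~\ref{control-expectation}, and $H^{(t)}$ collects the higher-order $\Psi_1,\Psi_2$ remainders from Lemma~\ref{lemma:2nd-expansion}.

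I would then introduce an auxiliary sequence $\{\beta^{(t)}\}$ with $\beta^{(0)}=1-\alpha^{(T_1)}$ satisfying the purely linear scalar SGD recursion $\beta^{(t+1)}=(1-\eta^{(T_1+t)}c^{(T_1+t)})\beta^{(t)}-\eta^{(T_1+t)}\xi^{(T_1+t+1)}$, and bound the discrepancy $|(1-\alpha^{(T_1+t)})-\beta^{(t)}|$ by iterating the estimate $|\eta^{(t)}\rho^{(t)}|+[\eta^{(t)}]^2|H^{(t)}|$ using the high-probability bounds on $E^{(t+1)}$ and $\|\Wt\|_{\tF}$ from Lemma~\ref{lemma:high-prob}. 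Unrolling $\beta^{(T-T_1)}$ produces the standard bias--variance split as $\prod_s(1-\eta^{(T_1+s)}c^{(T_1+s)})\beta^{(0)}$ plus a weighted martingale sum. Using Theorem~\ref{thm-phase-I-tensor-PCA} together with Lemma~\ref{phase-II-high-probability} to bound $|\beta^{(0)}|\lesssim\epsilon$, squaring the bias product over the first epoch, and applying Markov's inequality with failure mass $\delta/2$ then yields the first summand $(1-\eta_0\lambda k(1-3\epsilon/2)^{k/2}/2)^{T_1}\epsilon^2/\delta$ of the claim.

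For the variance term I would compute the second moment of the weighted martingale sum via orthogonality of increments, controlling the per-step variance of $\xi^{(t+1)}$ by applying Corollary~\ref{corollary-estimation-second-moment} to each of the three summands in the explicit formula~\eqref{eq:errort} for $E^{(t+1)}$; this is the origin of the $\cst_0\sigma^2+\lambda^2 k^2+\cst_4 d^4$ factor appearing in the variance term. Because the schedule $\eta^{(t)}=\eta_0/2^{\lfloor(t-T_1)/T_1\rfloor}$ keeps $\eta^{(t)}$ constant inside each of the $\lceil\log T\rceil$ epochs, each epoch contributes a geometric series that telescopes to the final-epoch variance divided by $c^{(t)}$; summing across epochs and exploiting the halving of $\eta^{(t)}$ reproduces the classical $\eta_0\lceil\log T\rceil/T$ last-iterate rate, and dividing by the additional contraction factors inherited from the squared denominator produces the third summand of the bound. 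The bias residual $\eta^{(t)}\rho^{(t)}+[\eta^{(t)}]^2 H^{(t)}$ is absorbed into the intermediate $1/T^4$ term by picking $\delta$ polynomially small in $T$ inside Lemma~\ref{control-expectation} and by using $[\eta^{(t)}]^2\lesssim(\eta_0/T)^2$ in the final epochs. The main obstacle is precisely this discrepancy analysis: the higher-order remainder $H^{(t)}$ contains negative powers of $\alphat$ that are only finite thanks to the confinement $\alphat\ge 1-3\epsilon/2$ from Lemma~\ref{phase-II-high-probability}, and one must propagate it through the decaying epochs without letting it inflate the variance scale---this is where the lower bound on the good event and the tight control of $\rho^{(t)}$ via Lemma~\ref{control-expectation} are essential.
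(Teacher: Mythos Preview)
Your overall architecture---condition on the confinement event from Lemma~\ref{phase-II-high-probability}, linearize the $\alpha$-recursion into a contraction-plus-noise form, split into bias and variance, and convert to high probability via Markov---matches the paper's strategy. The paper implements this through a coupling $O^{(t)}=\psi(\olinesW^{(t)})$ and tracks $\widehat\beta^{(t)}=1-\beta^{(t)}$, deriving a recursion for $\bbE[(\widehat\beta^{(t)})^2]$ and then decomposing into auxiliary bias/variance sequences $B^{(t)},V^{(t)}$; your auxiliary $\beta^{(t)}$ plays the same role.

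There is, however, a genuine gap in the variance step. You bound the per-step variance of $\xi^{(t+1)}$ via Corollary~\ref{corollary-estimation-second-moment}, which only yields a \emph{uniform} $O(\sigma^2 k^2)$ bound. The paper instead uses Assumption~\ref{assumption-estimation-second-moment} (exact isotropic second moment) to carry out an explicit computation showing
\[
\bbE_t\bigl[(\olinesg^{(t)})^2\bigr]=\tfrac{\cst_0 k\sigma^2}{2}\bigl(1-[\beta^{(t)}]^2\bigr)+O(T^{-6}),
\]
i.e.\ the noise variance is proportional to $\widehat\beta^{(t)}$. This is crucial: the cross term $[\etat]^2\cdot\cst_0 k\sigma^2\,\widehat\beta^{(t)}$ is then traded via Young's inequality for $\tfrac14\etat\lambda k(1-\bar\epsilon)^{k/2}(\widehat\beta^{(t)})^2$ (absorbed into the contraction) plus a residual of order $[\etat]^3\cst_0^2 k^2\sigma^4/(\lambda k(1-\bar\epsilon)^{k/2})$. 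The resulting recursion \eqref{main-recursion} has an $[\etat]^3 g^{(t)}$ source, not $[\etat]^2$, and it is precisely this cubic power that, after the geometric-decay summation (Lemma~\ref{esti-V} via the $f$-function of \citet{wu2022last}), produces the extra $1/T_1$ and hence the $\eta_0/T$ scaling with the \emph{squared} constants $\cst_0^2k^2\sigma^4+\lambda^4k^4+\cst_4^2d^4$ appearing in the theorem.

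Your proposal would instead land on an $[\etat]^2$ recursion, giving a variance contribution $\lesssim\sigma^2 k\,\eta_0/(\lambda(1-\bar\epsilon)^{k/2})$ with \emph{linear} factors (which is what you wrote, ``$\cst_0\sigma^2+\lambda^2 k^2+\cst_4 d^4$''); this is the content of Corollary~\ref{corollary-no-Ass2-evenk}, not Theorem~\ref{phase-II-convergence-thm}. A related symptom: you attribute the $\lambda^4 k^4+\cst_4^2 d^4$ factor to the martingale variance of $\xi^{(t+1)}$, but in the paper these arise from $\bbE_t[(\tisg^{(t)})^2]$, the squared higher-order remainder $\Psi_1$ bounded through Lemma~\ref{estimation}, while the $\cst_0^2k^2\sigma^4$ piece comes from the Young step on the isotropic noise. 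To recover the theorem's bound you must invoke Assumption~\ref{assumption-estimation-second-moment} and carry out the $\calI^{(t)}-\calII^{(t)}+\calIII^{(t)}$ cancellation explicitly.
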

The first phase and the second phase results established above enable the proof of Theorem \ref{main-theorem}.
\begin{proof}[Proof of Theorem \ref{main-theorem}]
	As stipulated by the selection rules for the total iteration count $T$ and the initial step size $\eta_{0}$ in Theorem \ref{thm-phase-I-tensor-PCA}, we can require that $T$ and $\eta_0$ satisfy:
	\begin{align}
		\frac{T_1}{\lceil \log(T_1)\rceil}\geq\frac{2097152\left(2+\log(\sigma kdT/\delta)\right)e^2\cst_1^2\cst_4d^{\frac{k}{2}}}{\lambda^2\epsilon^2(1-\epsilon)^{\frac{k}{2}}\max\left\{k(k-4),\log^{-1}(d)\right\}},\quad \eta_0=\frac{16\cst_1d^{\frac{k}{4}-1}}{\lambda\epsilon\max\left\{k(k-4),\log^{-1}(d)\right\}T_1},\notag
	\end{align}
	where 
	$$
	\cst_1=\max\left\{\frac{\epsilon\max\left\{k(k-4),\log^{-1}(d)\right\}\log\left(\text{poly}(T)\right)}{k(1-\epsilon)^{\frac{k}{2}}d^{\frac{k}{4}-1}},1\right\}.
	$$
	Combining Theorem \ref{thm-phase-I-tensor-PCA}, Lemma \ref{phase-II-high-probability}, and Theorem \ref{phase-II-convergence-thm}, one can notice that the last iterate of Algorithm \ref{SGA} satisfies
	\begin{equation}
		\small
		\begin{aligned}
			\left(1-\frac{\llangle v_*,W^{(T)}v^*\rrangle}{\|W^{(T)}\|_{\tF}}\right)^2\lesssim\underbrace{\left(1-\frac{\eta_0\lambda k(1-3\epsilon/2)^{\frac{k}{2}}}{2}\right)^{T_1}\frac{\epsilon^2}{\delta}+\frac{\lceil\log(T)\rceil\eta_{0}}{\lambda^2k^2(1-3\epsilon/2)^{k}\delta T^4}+\frac{\lceil\log(T)\rceil(\mathsf{c}_0^2k^2\sigma^4+\lambda^4k^4+\cst_4^2d^4)\eta_{0}}{\lambda^3k^3(1-3\epsilon/2)^{\frac{3k}{2}}\delta T}}_{\err^2},
		\end{aligned}
	\end{equation}
	with probability at least $1-\delta$.
	
	Consider the symmetric matrix $X= \frac{1}{2\left\|W^{(T)}\right\|_{\tF}}\left(W^{(T)}+ \left[W^{(T)}\right]^{\top}\right)$. Let $\lambda_1 \geq \lambda_2 \geq \cdots \geq \lambda_d$ be its eigenvalues sorted in descending order, with corresponding eigenvectors $\{v_i\}_{i=1}^d$. Then we have
	\begin{align}\label{esti-1}
		\lambda_{1}\geq\llangle v_*,Xv_*\rrangle=\frac{\llangle v_*,W^{(T)}v_*\rrangle}{\|W^{(T)}\|_{\tF}}\geq1-\calO\left(\err\right).
	\end{align}
	Moreover, since vector $v_*$ can be written as the sum of two components: one that is parallel to $v_1$, and one that lies in $(v_1)_{\perp}$, we write $v_*$ as $v_*=\sum_{i=1}^{d}\alpha_iv_i$. Therefore, we can obtain
	\begin{align}\label{esti-2}
		\llangle v_*,Xv_*\rrangle=\sum_{i=1}^{d}\lambda_{i}\alpha_i^2.
	\end{align}
	Noticing that $\|X\|_{\tF}\leq 1$, we derive that $\sum_{i=2}^{d}\lambda_i^2\leq1-\left(1-\calO(\err)\right)^2$. Eqs.~\eqref{esti-1} and \eqref{esti-2} implicate that
	\begin{align}\label{eq-1}
		\alpha_1^2\geq\frac{1-\calO(\err)-\sum_{i=2}^{d}\lambda_i\alpha_i^2}{\lambda_{1}}\overset{\text{(a)}}{\geq}\frac{1-\calO(\err)-\left(\sum_{i=2}^{d}\lambda_{i}^2\right)^{1/2}(1-\alpha_1^2)^{1/2}}{\lambda_{1}},
	\end{align}
	where (a) is derived from the Cauchy-Schwarz inequality and the fact that $\sum_{i=2}^{d}\alpha_i^4\leq\sum_{i=2}^{d}\alpha_i^2=1-\alpha_1^2$. By Eq.~\eqref{eq-1}, we have
	\begin{align}\label{eq-2}
		1-\alpha_1^2\leq(1-\alpha_1^2)^{1/2}\calO(\err^{1/2})+\calO(\err)\Rightarrow 1-\alpha_1^2\leq\calO(\err).
	\end{align} 
	Considering $\|v^*-v_1\|^2$ and $\|v^*+v_1\|^2$, we have
	\begin{equation}\label{eq-3}
		\begin{split}
			\|v^*-v_1\|^2=&(1-\alpha_1)^2+\sum_{i=2}^{d}\alpha_i^2=(1-\alpha_1)^2+(1-\alpha_1^2),
			\\
			\|v^*+v_1\|^2=&(1+\alpha_1)^2+\sum_{i=2}^{d}\alpha_i^2=(1+\alpha_1)^2+(1-\alpha_1^2).
		\end{split}
	\end{equation}
	The combination of Eq.~\eqref{eq-2} and Eq.~\eqref{eq-3} implies that $\min\left\{\|v^*-v_1\|^2,\|v^*+v_1\|^2\right\}\leq\calO(\err)$. Consequently, the power method guarantees that $v_{\text{Alg}}$ converges linearly to $v_1$, achieving high-precision approximation within few iterations. By choosing $\bar{c}=k^{-1}$ (i.e. $\epsilon=2k^{-1}/3
	$) and $\delta\in(0,1)$, we complete the proof.
\end{proof}

\subsection{Proof of the First Phase (Theorem \ref{thm-phase-I-tensor-PCA})}
The following lemma is a deterministic result claiming that the residuals $\Psi_1$ and $\Psi_2$ are of $\log (Tkd/\delta)$ order. 

\begin{lemma}\label{estimation}
	Suppose $d>k$. For any $t\in [T]$, if we further assume that 
	\begin{align*}
		\alpha^{(t)} \ge \frac{1}{(1+1/k)d^{1/2}} \qquad \text{and} \qquad \etat\leq\frac{1}{8\left(\lambda k^2+\sqrt{\mathsf{c}_1}d^2\right)},
	\end{align*} 
	then the following hold
	\begin{align}
		\left|-\Psi_1\left(\Wt,Q^{(t)},v_*,\bar{\eta}^{(t)}\right)\right|\leq&32\left(\lambda\left[\alphat\right]^{\frac{k}{2}}+\sqrt{\mathsf{c}_1}\right)\cdot\left(\lambda k\left[\alphat\right]^{\frac{k}{2}-1}+\sqrt{\mathsf{c}_1}\right)\notag
		\\
		&\qquad\qquad+32\alphat\cdot\left(\lambda^2k^2\left[\alphat\right]^{k-2}+\mathsf{c}_1(d^2+1)\right),\label{esti-Psi1}
		\\
		\Psi_2\left(\Wt,Q^{(t)},v_*,\bar{\eta}^{(t)}\right)\leq&16\left(\lambda k\left[\alphat\right]^{\frac{k}{2}-1}+\sqrt{\mathsf{c}_1}\right)^2.\label{esti-Psi2}
	\end{align}
	Moreover, we can obtain
	\begin{align}
		&\alpha\left(v_*,\Wt+\bar{\eta}^{(t)}Q^{(t)}\right)\geq\left(1-\frac{1}{k}\right)\alphat,\label{esti-perturb-tialpha}
		\\
		&\left|\frac{1}{\left\|\Wt\right\|_{\tF}}\llangle E^{(t+1)}, v_* v_*^{\top}\rrangle-\frac{\alphat}{\left\|\Wt\right\|_{\tF}^2}\llangle E^{(t+1)},\Wt\rrangle\right|\leq\sqrt{\mathsf{c}_1}\left(1+3\alphat\right).\label{uni-bound-1-order-random-term}
	\end{align}
\end{lemma}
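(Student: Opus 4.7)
The four inequalities decompose into three groups: the algebraic bounds \eqref{esti-Psi1}--\eqref{esti-Psi2} on the polynomial residuals, the perturbed-alignment inequality \eqref{esti-perturb-tialpha}, and the noise contraction \eqref{uni-bound-1-order-random-term}. The unifying strategy is to first extract uniform bounds on the building blocks $v_*^\top G^{(t)} v_*$, $\llangle \Wt,G^{(t)}\rrangle$, $\|G^{(t)}\|_{\tF}$ and $\|\Wt+\bar{\eta}^{(t)} G^{(t)}\|_{\tF}$, then substitute these bounds into the closed-form expressions \eqref{psi-1}--\eqref{psi-2}. Starting from \eqref{eq:gradient} and invoking Lemma~\ref{lemma:high-prob}---in particular the entrywise estimate, whose summation over the $d^2$ matrix entries yields $\|E^{(t+1)}\|_{\tF}\le 2\sqrt{\mathsf{c}_4}\|\Wt\|_{\tF}\sqrt{d^2+1}$---I expect the scales
\begin{align*}
|v_*^\top G^{(t)} v_*|&\lesssim \|\Wt\|_{\tF}(\lambda k[\alphat]^{k/2-1}+\sqrt{\mathsf{c}_1}),\\
|\llangle \Wt,G^{(t)}\rrangle|&\lesssim \|\Wt\|_{\tF}^2(\lambda[\alphat]^{k/2}+\sqrt{\mathsf{c}_1}),\\
\|G^{(t)}\|_{\tF}&\lesssim \|\Wt\|_{\tF}(\lambda k[\alphat]^{k/2-1}+\sqrt{\mathsf{c}_1}\,d),
\end{align*}
with $\mathsf{c}_1$ absorbing $\mathsf{c}_4$ up to polylog factors; the $d$ in the third line is the ultimate source of the $d^2$ appearing in \eqref{esti-Psi1}.

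Next, the step-size hypothesis $\etat\le 1/(8(\lambda k^2+\sqrt{\mathsf{c}_1}d^2))$ combined with the gradient-norm bound forces $\bar{\eta}^{(t)}\|G^{(t)}\|_{\tF}\lesssim\|\Wt\|_{\tF}/k$ for every $\bar{\eta}^{(t)}\in[0,\etat]$, so by the triangle inequality $\|\Wt+\bar{\eta}^{(t)} G^{(t)}\|_{\tF}\in[1-O(1/k),\,1+O(1/k)]\cdot\|\Wt\|_{\tF}$, and every Frobenius-norm denominator in \eqref{psi-1}--\eqref{psi-2} is comparable to $\|\Wt\|_{\tF}^{j}$ up to a constant. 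Substituting termwise into \eqref{psi-1}, the first summand of $-\Psi_1$ yields the factor $\lesssim(\lambda[\alphat]^{k/2}+\sqrt{\mathsf{c}_1})(\lambda k[\alphat]^{k/2-1}+\sqrt{\mathsf{c}_1})$, while the second and third summands yield $\alphat(\lambda^2 k^2[\alphat]^{k-2}+\mathsf{c}_1(d^2+1))$; the $d^2$ enters when the third summand squares $(\llangle\Wt,G^{(t)}\rrangle+\bar{\eta}^{(t)}\|G^{(t)}\|_{\tF}^2)$, producing \eqref{esti-Psi1}. For \eqref{esti-Psi2}, a naive Cauchy--Schwarz on $|\llangle \Wt+\bar{\eta}^{(t)} G^{(t)},G^{(t)}\rrangle|$ produces a spurious $d$, so I rewrite the bracket as $(B\|\Wt\|_{\tF}^{2}-AC+\bar{\eta}^{(t)}(BC-AD))/\|\Wt+\bar{\eta}^{(t)}G^{(t)}\|_{\tF}^{3}$ with $A=v_*^\top \Wt v_*$, $B=v_*^\top G^{(t)} v_*$, $C=\llangle \Wt,G^{(t)}\rrangle$, $D=\|G^{(t)}\|_{\tF}^{2}$; the deterministic leading terms of $B\|\Wt\|_{\tF}^{2}-AC$ exhibit the exact cancellation $\tfrac{\lambda k}{2}[\alphat]^{k/2-1}(1-[\alphat]^{2})\|\Wt\|_{\tF}^{3}$ (using the algebraic identity $\tfrac{\lambda(k-4)}{2}+2\lambda=\tfrac{\lambda k}{2}$), the noise contributions give $O(\sqrt{\mathsf{c}_1}\|\Wt\|_{\tF}^{3})$, and the step-size condition keeps $\bar{\eta}^{(t)}(BC-AD)$ at the same scale (in particular $\bar{\eta}^{(t)}\cdot\alphat\mathsf{c}_4 d^{2}\lesssim\alphat\sqrt{\mathsf{c}_1}$ after substituting the step-size cap). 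Combining and taking the square yields \eqref{esti-Psi2} with the factor $16=4^{2}$ emerging from the accumulated $(8/7)^{3}$-type constants.

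For \eqref{esti-perturb-tialpha}, the deterministic part of $v_*^\top G^{(t)} v_*$ is non-negative (since $\tfrac{\lambda k}{2}-\tfrac{\lambda(k-4)}{2}[\alphat]^{2}\ge 2\lambda>0$), while the noise contribution $\bar{\eta}^{(t)}|\llangle E^{(t+1)},v_*v_*^\top\rrangle|\lesssim\etat\sqrt{\mathsf{c}_1}\|\Wt\|_{\tF}$ is negligible compared with $v_*^\top \Wt v_*=\alphat\|\Wt\|_{\tF}\ge(1+1/k)^{-1}\|\Wt\|_{\tF}/\sqrt{d}$ under the step-size condition; so the perturbed numerator is at least $(1-o(1/k))v_*^\top\Wt v_*$, and combined with the sharpened denominator bound $\|\Wt+\bar{\eta}^{(t)}G^{(t)}\|_{\tF}\le(1+1/(2k))\|\Wt\|_{\tF}$ from Step~2 the ratio is at least $(1-1/k)\alphat$ after applying the elementary inequality $(1-1/(2k))/(1+1/(2k))\ge 1-1/k$. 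Claim \eqref{uni-bound-1-order-random-term} is an immediate consequence of Lemma~\ref{lemma:high-prob} via the triangle inequality: $|\llangle E^{(t+1)},v_*v_*^\top\rrangle|/\|\Wt\|_{\tF}\le\sqrt{\mathsf{c}_1}(1+\alphat)$ and $\alphat|\llangle E^{(t+1)},\Wt\rrangle|/\|\Wt\|_{\tF}^{2}\le\sqrt{2\mathsf{c}_1}\alphat$, summing to $\sqrt{\mathsf{c}_1}(1+3\alphat)$ after absorbing $\sqrt{2}$ into the polylog constant $\mathsf{c}_1$. The main obstacle of the whole argument is the $\Psi_2$ bound: the naive Cauchy--Schwarz route produces a dimension-dependent factor that would ruin \eqref{esti-Psi2}, so the nontrivial deterministic cancellation in $B\|\Wt\|_{\tF}^{2}-AC$ must be exhibited explicitly and all residual noise corrections shown to scale as $\sqrt{\mathsf{c}_1}\|\Wt\|_{\tF}^{3}$ rather than $\sqrt{\mathsf{c}_1}d\|\Wt\|_{\tF}^{3}$.
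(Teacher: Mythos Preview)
Your proposal is correct and for \eqref{esti-Psi1}, \eqref{esti-perturb-tialpha}, and \eqref{uni-bound-1-order-random-term} it matches the paper's argument essentially step by step: define the three building blocks $\calI^{(t)}=\frac{|\llangle G^{(t)},v_*v_*^\top\rrangle|}{\|\Wt\|_{\tF}}$, $\calII^{(t)}=\frac{|\llangle G^{(t)},\Wt\rrangle|}{\|\Wt\|_{\tF}^2}$, $\calIII^{(t)}=\frac{\|G^{(t)}\|_{\tF}^2}{\|\Wt\|_{\tF}^2}$, bound them via Lemma~\ref{lemma:high-prob}, and then substitute.

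The one place you diverge is $\Psi_2$. Your algebraic-cancellation route through $B\|\Wt\|_{\tF}^{2}-AC$ works, but it is unnecessary. The ``spurious $d$'' you worry about never appears in the paper's direct bound, because the paper does \emph{not} apply Cauchy--Schwarz to $\llangle \Wt+\bar\eta^{(t)}G^{(t)},G^{(t)}\rrangle$; it simply writes this as $\llangle \Wt,G^{(t)}\rrangle+\bar\eta^{(t)}\|G^{(t)}\|_{\tF}^{2}=\|\Wt\|_{\tF}^{2}(\calII^{(t)}+\bar\eta^{(t)}\calIII^{(t)})$. The first summand is dimension-free by the clause $|\llangle E^{(t+1)},\Wt\rrangle|\le\sqrt{2\cst_4}\|\Wt\|_{\tF}^{2}$ of Lemma~\ref{lemma:high-prob}, and the second summand is the only place $d^{2}$ could enter --- but it carries an extra $\bar\eta^{(t)}$, and the step-size hypothesis $\etat\le 1/(8\sqrt{\mathsf{c}_1}d^{2})$ is precisely what makes $\bar\eta^{(t)}\calIII^{(t)}\lesssim\sqrt{\mathsf{c}_1}$. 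So the paper bounds the bracket in \eqref{psi-2} term-by-term as $\lesssim\calI^{(t)}+(\alphat+\bar\eta^{(t)}\calI^{(t)})(\calII^{(t)}+\bar\eta^{(t)}\calIII^{(t)})$ and squares. Your cancellation buys nothing here because the crude bound already lands at the right scale; what your rewriting \emph{would} buy is independence from the step-size cap on the $d^{2}$ term, but since that cap is assumed anyway there is no gain. A minor related slip: in your $\Psi_1$ discussion you attribute the $d^{2}$ to ``the third summand squaring $\llangle\Wt,G^{(t)}\rrangle+\bar\eta^{(t)}\|G^{(t)}\|_{\tF}^{2}$'', but that square is again $(\calII^{(t)}+\bar\eta^{(t)}\calIII^{(t)})^{2}=O(\mathsf{c}_1)$; the $d^{2}$ in \eqref{esti-Psi1} actually comes from the \emph{second} summand of \eqref{psi-1}, which carries a bare $\|Q\|_{\tF}^{2}$ and hence a bare $\calIII^{(t)}$.
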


\begin{proof}[Proof of Lemma \ref{estimation}]
	Recall the definition of $G^{(t)}$ and $E^{(t+1)}$ in \eqref{eq:gradient} and \eqref{eq:errort}, respectively. 
	Utilizing the construction of $E^{(t+1)}$ provided in section \ref{high-probability} directly, we can obtain
	\begin{align}
		\calI^{(t)}:=&\frac{\left|\llangle G^{(t)}, v_* v_*^{\top}\rrangle\right|}{\left\|\Wt\right\|_{\tF}}\leq\frac{\lambda k}{2}\left[\alphat\right]^{\frac{k}{2}-1}+\frac{\lambda(k-4)}{2}\left[\alphat\right]^{\frac{k}{2}+1}+\left|\frac{\llangle E^{(t+1)}, v_* v_*^{\top}\rrangle}{\left\|\Wt\right\|_{\tF}}\right|\notag
		\\
		&\, \, \, \, \, \, \, \, \, \, \, \, \, \, \, \, \, \, \, \, \, \, \, \, \, \, \, \, \, \, \, \, \, \, \, \, \, \leq\frac{\lambda k}{2}\left[\alphat\right]^{\frac{k}{2}-1}+\frac{\lambda(k-4)}{2}\left[\alphat\right]^{\frac{k}{2}+1}+\sqrt{\mathsf{c}_1}\left(1+\alphat\right),\label{component-1-PCA-dynamic}
		\\
		\calII^{(t)}:=&\frac{\left|\llangle G^{(t)},\Wt\rrangle\right|}{\left\|\Wt\right\|_{\tF}^2}\leq2\lambda\left[\alphat\right]^{\frac{k}{2}}+\left|\frac{\llangle{E}^{(t+1)},\Wt\rrangle}{\left\|\Wt\right\|_{\tF}^2}\right|\leq2\lambda\left[\alphat\right]^{\frac{k}{2}}+\sqrt{2\mathsf{c}_1},\label{component-2-PCA-dynamic}
		\\
		\calIII^{(t)}:=&\frac{\left\|G^{(t)}\right\|_{\tF}^2}{\left\|\Wt\right\|_{\tF}^2}\leq2(\lambda k)^2\left[\alphat\right]^{k-2}+2\lambda^2(k-4)^2\left[\alphat\right]^k+8\frac{\left\|{E}^{(t+1)}\right\|_{\tF}^2}{\left\|\Wt\right\|_{\tF}^2}\notag
		\\
		&\, \, \, \, \, \, \, \, \, \, \, \, \, \, \, \, \, \, \, \, \, \, \, \, \leq2(\lambda k)^2\left[\alphat\right]^{k-2}+2\lambda^2(k-4)^2\left[\alphat\right]^k+8\mathsf{c}_1(d^2+1).\label{component-3-PCA-dynamic}
	\end{align}
	According to the above estimation and the setting of $\eta_t$, we also have
	\begin{equation}\label{component-4-PCA-dynamic}
		\small
		\begin{aligned}
			\frac{\left\|\Wt+\bar{\eta}^{(t)}G^{(t)}\right\|_{\tF}^2}{\left\|\Wt\right\|_{\tF}^2}\leq&1+\etat\left(4\lambda\left[\alphat\right]^{\frac{k}{2}}+2\sqrt{2\mathsf{c}_1}\right)+4\left[\etat\right]^2\left(\lambda^2k^2\left[\alphat\right]^{k-2}+2\mathsf{c}_1(d^2+1)\right)\leq1+\frac{1}{k^2},
			\\	\frac{\left\|\Wt+\bar{\eta}^{(t)}G^{(t)}\right\|_{\tF}^2}{\left\|\Wt\right\|_{\tF}^2}\geq&1-\etat\left(4\lambda\left[\alphat\right]^{\frac{k}{2}}+2\sqrt{2\mathsf{c}_1}\right)\geq1-\frac{1}{k^2}.
		\end{aligned}
	\end{equation} 
	According to the definition of $\Psi_1(W, Q, v, \eta)$ in Eq.~\eqref{psi-1},
	applying Eq.~\eqref{component-1-PCA-dynamic}-\eqref{component-4-PCA-dynamic} to the expression of $\Psi_1\left(\Wt,G^{(t)},v_*,\bar{\eta}^{(t)}\right)$ yields
	\begin{equation}
		\small
		\begin{aligned}
			\left|-\Psi_1\left(\Wt,G^{(t)},v_*,\bar{\eta}^{(t)}\right)\right|\leq&4\sqrt{2}\calI^{(t)}\cdot\left(\calII^{(t)}+\bar{\eta}^{(t)}\calIII^{(t)}\right)+2\sqrt{2}\calIII^{(t)}\cdot\left(\alphat+\bar{\eta}^{(t)}\calI^{(t)}\right)
			\\
			\leq&4\sqrt{2}\left(2\sqrt{2}\left[\alphat\right]^{\frac{k}{2}}+\sqrt{\mathsf{c}_1}\right)\cdot\calI^{(t)}+4\alphat\cdot\calIII^{(t)}.\notag
		\end{aligned}
	\end{equation}
	Similarly, based on the definition of $\Psi_2(W, Q, v, \eta)$ in Eq.~\eqref{psi-2}, we further obtain
	\begin{align*}
		\Psi_2\left(\Wt,G^{(t)},v_*,\bar{\eta}^{(t)}\right)\leq&2\left[\calI^{(t)}+\bar{\eta}^{(t)}\left(\alphat+\bar{\eta}^{(t)}\calI^{(t)}\right)\cdot\left(\calII^{(t)}+\bar{\eta}^{(t)}\calIII^{(t)}\right)\right]^2\notag
		\\
		\leq&2\left[\calI^{(t)}+\sqrt{2}\bar{\eta}^{(t)}\alphat\left(2\sqrt{2}\left[\alphat\right]^{\frac{k}{2}}+\sqrt{\mathsf{c}_1}\right)\right]^2.\notag
	\end{align*}
	It can be derived that
	\begin{align}
		\alpha\left(v_*,\Wt+\bar{\eta}^{(t)}G^{(t)}\right)\geq\frac{k}{\sqrt{k^2+1}}\alphat-\frac{\bar{\eta}^{(t)}k}{\sqrt{k^2-1}}\calI^{(t)}\geq\left(1-\frac{1}{k}\right)\alphat,\notag
	\end{align}
	combining Eq.~\eqref{component-1-PCA-dynamic} and Eq.~\eqref{component-4-PCA-dynamic}. Finally, the union bound \eqref{uni-bound-1-order-random-term} follows from the definition of ${E}^{(t+1)}$.
\end{proof}

For given $\delta$ and $\epsilon$, we define the three ``bad'' events
\begin{align*}
	\mathcal{E}_{1} &= \left\{\exists t\in [T_1], \alphat < \frac{1}{(1+1/k)d^{1/2}} \right\}, 
	\\
	\mathcal{E}_{2} &= \left\{ \max_{t\in [T_1]} \alphat < 1 - \frac{\epsilon}{2} \right\}, 
	\\
	\mathcal{E}_{3} &= \left\{  \alpha^{(T_1)} < 1 - \epsilon \right\}.
\end{align*} 
In the remaining three steps, we will show that $\mathbb{P}\left[\left(\bigcap_{1\le l'< l} \mathcal{E}_{\ell'}^c\right) \bigcap \mathcal{E}_\ell \right] \le \delta/6$ for any $\ell \in \{1,2,3\}$. Thus, applying union bound further yields
\begin{align*}
	\mathbb{P}\left[\mathcal{E}_3\right] &= \mathbb{P} \left[ \mathcal{E}_1 \bigcup \left(\mathcal{E}_1^c \bigcap \mathcal{E}_2 \right) \bigcup \left(\mathcal{E}_1^c \bigcap \mathcal{E}_2^c \bigcap \mathcal{E}_3 \right)\right]
	\le 3\times \frac{\delta}{6} \le \frac{\delta}{2},
\end{align*} 
which further implies $\mathbb{P}\left[\mathcal{E}_3\right] \le \delta/2$. 
Next, we define the constrained coupling processes used in the following lemmas as below. 
\begin{lemma}\label{PCA-phase-I-upper-bound}
	Assume $d\geq\Omega(k)$ and $\lambda\leq\calO\left(d^{k/4}\right)$, and suppose $\eta_0\leq f_1(k,d), T_1\geq\eta_0^{-1}$ for all $k\geq 4$, and $T_1\eta_0^2\leq f_2(k,d)$ when $k>4$, where
	\begin{align}\label{para-lemma-upper-bound}
		f_1(k,d):=\frac{\lambda \left(k+4\log^{-1}(3T_1^2/\delta)\right)}{2048e^2\mathsf{c}_1d^{\frac{k}{4}+1}},\quad f_2(k,d):=\frac{(e^{\frac{2}{3}}-1)\log^{-1}(3T_1^2/\delta)}{16e(k-4)^2\mathsf{c}_1d}.
	\end{align}
	Under the setting of Theorem \ref{thm-phase-I-tensor-PCA}, the event $\calE_1$ holds with probability at most $\frac{\delta}{6}$.
\end{lemma}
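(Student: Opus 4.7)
The plan is to control $\mathbb{P}(\calE_1)$ via a stopped-martingale concentration argument on a carefully chosen Lyapunov function. For $k>4$, I take $Y^{(t)} := [\alpha^{(t)}]^{-(k/2-2)}$, motivated by the polynomial expansion in Eq.~\eqref{dynamic-tialpha-poly}: its drift is non-positive, the identity initialization gives $Y^{(0)} = d^{k/4-1}$, and the target $\alphat \geq (1+1/k)^{-1}d^{-1/2}$ is equivalent to $Y^{(t)} \leq (1+1/k)^{k/2-2} d^{k/4-1}$, leaving an excess budget of size $\Delta \asymp d^{k/4-1}/k$. For $k=4$ the exponent $k/2-2$ vanishes, so I instead track $\alphat$ itself via Eq.~\eqref{dynamic-tialpha-original}, whose explicitly positive drift again yields a one-sided setup with downside budget $\asymp d^{-1/2}/k$. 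Introduce the stopping time $\tau_* := \inf\{t \leq T_1 : \alphat < (1+1/k)^{-1}d^{-1/2}\}\wedge T_1$ so that $\calE_1 = \{\tau_* < T_1\}$; on $\{t<\tau_*\}$ the step-size hypothesis $\eta_0 \leq f_1(k,d)$ and the lower bound on $\alphat$ exactly meet the premise of Lemma~\ref{estimation}.

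Expanding Eq.~\eqref{dynamic-tialpha-poly} for $k>4$ yields the stopped increment decomposition
\begin{equation*}
Y^{(t+1)} - Y^{(t)} = \underbrace{-\tfrac{\eta_0\lambda k(k-4)}{4}(1-[\alphat]^2)}_{\leq 0} \;+\; \xi^{(t+1)} \;+\; r^{(t+1)},
\end{equation*}
where $\xi^{(t+1)}$ is the term linear in $E^{(t+1)}$ and $r^{(t+1)}$ gathers the $[\eta_0]^2$-order contributions from $\Psi_1$ and $\Psi_2$. Lemma~\ref{estimation} together with Eq.~\eqref{uni-bound-1-order-random-term} give the uniform-on-$\{t<\tau_*\}$ bounds $|\xi^{(t+1)}| \lesssim \eta_0(k-4)\sqrt{\cst_1}\,d^{k/4-1/2}$ and $|r^{(t+1)}| \lesssim \eta_0^2(k-4)(\lambda^2k^2 + \cst_1 d^2)\,d^{k/4-1}$. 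Moreover, although the truncation $\mathds{1}_{\calA^{(t+1)}(\delta)}$ inside the definition of $E^{(t+1)}$ destroys exact zero-mean, Lemma~\ref{control-expectation} with $\delta$ polynomially small in $(\sigma k d T)$ reduces the conditional bias of $\xi^{(t+1)}$ to a negligible size, so $\{\sum_{s\leq t\wedge\tau_*}\xi^{(s+1)}\}$ is a martingale up to an aggregate deterministic correction absorbed into $\Delta$.

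Applying Azuma--Hoeffding to the stopped martingale gives
\begin{equation*}
\mathbb{P}\Bigl[\sup_{t\leq T_1}\bigl|\textstyle\sum_{s\leq t\wedge\tau_*}\xi^{(s+1)}\bigr| \geq \tfrac{\Delta}{2}\Bigr] \leq 2\exp\Bigl(-\tfrac{c\,\Delta^2}{T_1\eta_0^2(k-4)^2\cst_1 d^{k/2-1}}\Bigr).
\end{equation*}
Requiring this to be $\leq \delta/12$ with $\Delta^2 \asymp d^{k/2-2}/k^2$ reproduces $T_1\eta_0^2 \lesssim 1/((k-4)^2\cst_1 d\log(T_1/\delta))$, i.e., the condition $T_1\eta_0^2 \leq f_2(k,d)$. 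The deterministic constraint $T_1\,|r^{(t+1)}|\leq \Delta/2$ combined with the choice of $T_1$ and $\eta_0$ in Theorem~\ref{thm-phase-I-tensor-PCA} enforces the $\eta_0\leq f_1(k,d)$ hypothesis. Intersecting these two high-probability events, $Y^{(t)} \leq Y^{(0)}+\Delta$ for every $t\leq T_1\wedge\tau_*$, forcing $\tau_* = T_1$ and hence $\mathbb{P}(\calE_1)\leq \delta/6$. The $k=4$ case follows the same template applied to $\alphat$, and the vanishing $(k-4)$ prefactor in both $\xi^{(t+1)}$ and $r^{(t+1)}$ renders the $f_2$-condition vacuous at $k=4$.

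The principal obstacle is threading the truncation-induced bias from Lemma~\ref{control-expectation} through the martingale concentration: the bias accumulates over $T_1$ steps and must remain well below $\Delta$, which pins down how small $\delta$ must be chosen relative to $(\sigma k d T)$. A secondary subtlety is the fine constant-tracking: every $k$-, $e$-, and $\log$-factor in the explicit expressions for $f_1$ and $f_2$ in Eq.~\eqref{para-lemma-upper-bound} corresponds to a particular exponent in either the Azuma bound, the $\Psi_1/\Psi_2$ estimates of Lemma~\ref{estimation}, or the bias parameter of Lemma~\ref{control-expectation}, and the stated thresholds leave essentially no slack.
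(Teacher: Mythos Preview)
Your $k>4$ plan is sound and, in spirit, close to the paper's: both reduce to sub-Gaussian concentration on the increments of $Y^{(t)}=[\alphat]^{-(k/2-2)}$ after absorbing the $\Psi_1,\Psi_2$ terms. The paper frames this as a ``crossing-interval'' argument---it shows $Y^{(t)}$ must traverse $[d^{k/4-1},(1+1/k)^{(k-4)/2}d^{k/4-1}]$ before exceeding the threshold, bounds the crossing probability for each time pair $(t_1,t_2)$ via Lemma~\ref{aux-martingale-concentration-subtraction}, and union-bounds over $T_1^2/2$ pairs (whence the $\log(3T_1^2/\delta)$ in $f_2$). Your single stopped-maximal-Azuma argument is a clean alternative that would in fact produce a slightly weaker requirement $T_1\eta_0^2\lesssim 1/((k-4)^2\cst_1 d\log(1/\delta))$, comfortably implied by $T_1\eta_0^2\leq f_2$.

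The $k=4$ case, however, has a real gap. Your claim that ``the vanishing $(k-4)$ prefactor in both $\xi^{(t+1)}$ and $r^{(t+1)}$ renders the $f_2$-condition vacuous'' conflates two different expansions. The $(k-4)$ factors sit in front of the noise and residual terms of the $Y^{(t)}$ recursion \eqref{dynamic-tialpha-poly}; but at $k=4$ you have abandoned that recursion (since $Y^{(t)}\equiv 1$) and are tracking $\alphat$ via \eqref{dynamic-tialpha-original}, whose noise increment $\eta_0\|\Wt\|_{\tF}^{-1}\langle E^{(t+1)},v_*v_*^\top-\alphat\Wt/\|\Wt\|_{\tF}\rangle$ carries \emph{no} $(k-4)$ prefactor. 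A plain Azuma--Hoeffding bound on $\sum\xi$ then needs $T_1\eta_0^2\cst_1\lesssim d^{-1}/\log(1/\delta)$, a constraint the lemma does \emph{not} supply for $k=4$ (it only assumes $\eta_0\le f_1$ and $T_1\ge\eta_0^{-1}$). The paper closes this by exploiting the \emph{multiplicative} drift: it establishes $\hatalpha^{(t+1)}\geq(1+\eta_0\lambda)\hatalpha^{(t)}+\eta_0\hat\xi^{(t+1)}$ and applies Corollary~\ref{aux-coro-4}, whose variance denominator is the geometrically damped sum $\sum_j a_j^2\prod_i(1+\eta_0\lambda)^{-2}=O(\eta_0/\lambda)$ rather than $T_1\eta_0^2$. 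This is what makes $\eta_0\le f_1(4,d)$ alone sufficient. To repair your argument at $k=4$ you must either invoke this multiplicative structure or import the specific relation $\eta_0 T_1\asymp\log(d)/\lambda$ from Theorem~\ref{thm-phase-I-tensor-PCA} to manufacture the missing $T_1\eta_0^2$ bound---but then you are no longer proving the lemma from its stated hypotheses.
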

\begin{proof}[Proof of Lemma \ref{PCA-phase-I-upper-bound}]
	We commence the proof by defining a constrained coupling process.
	\begin{definition}\label{def-hatW}
		Let $\left\{\Wt\right\}_{t=0}^T$ be a Markov chain in $\bbR^{d\times d}$ adapted to filtration $\left\{\calF^{(t)}\right\}_{t=0}^T$. Define following event for scalar
		\begin{equation}
			\calE(\alpha):=\left\{\alpha\geq \frac{1}{(1+1/k)d^{1/2}}\right\}.\notag
		\end{equation}
		The constrained coupling process $\left\{\hatsW^{(t)}\right\}_{t=0}^T$ with initialization $\hatsW^{(0)}=W^{(0)}$ evolves as
		\begin{enumerate}
			\item \emph{Updating stage: } If $\hatsW^{(t)}$ satisfies $\calE\left(\hatalpha^{(t)}:=\frac{\llangle v_*,\hatsW^{(t)} v_*\rrangle}{\left\|\hatsW^{(t)}\right\|_{\tF}}\right)$,
			let $\hatsW^{(t+1)}=W^{(t+1)}$.
			\item \emph{Absorbing state: } Otherwise, maintain $\hatsW^{(t+1)}=\hatsW^{(t)}$.
		\end{enumerate}
	\end{definition}
	Let $\bar{\tau}$ be the stopping time when $\hatalpha^{(\bar{\tau})} < \left((1+1/k)d^{1/2}\right)^{-1}$, i.e.,
	\begin{equation}\nonumber
		\bar{\tau}=\inf_{t}\left \{ t:\hatalpha^{(\bar{\tau})} < \frac{1}{(1+1/k)d^{1/2}} \right \}.
	\end{equation}
	\noindent\textbf{Case I ($k>4$): }Based on Definition \ref{def-hatW}, when the stopping time $\bar{\tau}$ occurs for some $t_2\in[T_1]$, the coupling process satisfies $\hatalpha^{(t)}=\hatalpha^{(t_2)}$ for all $t>t_2$. That is, the event 
	$\calE\left(\hatalpha^{(t)}\right)$ holds for all $t\in[0:t_2-1]$. According to the dynamic of $\left[\alphat\right]^{-(\frac{k}{2}-2)}$ in Eq.~\eqref{dynamic-tialpha-poly} and the boundedness estimation provided by Lemma \ref{estimation}, one can notice that $\left[\hatalpha^{(t)}\right]^{-(\frac{k}{2}-2)}$ must traverse in and out of the threshold interval $\left[d^{\frac{k-4}{4}},(1+1/k)^{\frac{k-4}{2}}d^{\frac{k-4}{4}}\right]$ before exceeding $(1+1/k)^{\frac{k-4}{2}}d^{\frac{k-4}{4}}$. We aim to estimate the following probability for time pairs $t_1<t_2\in[T_1]$:
	\begin{equation}
		\begin{aligned}
			&\bbP\left(\calE_{t_1}^{\bar{\tau}=t_2}:=\left\{\left[\hatalpha^{(t_1)}\right]^{-(\frac{k}{2}-2)}\leq\frac{1+(1+1/k)^{\frac{k-4}{2}}}{2}d^{\frac{k-4}{4}}\bigcap\left[\hatalpha^{(t_1:t_2-1)}\right]^{-(\frac{k}{2}-2)}\in\left[d^{\frac{k-4}{4}},(1+1/k)^{\frac{k-4}{2}}d^{\frac{k-4}{4}}\right]\right.\right.
			\\
			&\, \, \, \, \, \, \, \, \, \, \, \, \, \, \, \, \, \, \, \, \, \, \, \, \, \, \, \, \, \, \, \, \, \, \, \, \, \, \, \,  \left.\left.\bigcap\left[\hatalpha^{(t_2)}\right]^{-(\frac{k}{2}-2)}\geq (1+1/k)^{\frac{k-4}{2}}d^{\frac{k-4}{4}}\right\}\right).\notag
		\end{aligned}
	\end{equation}
	For any $t\in[t_1:t_2-1]$, we have 
	\begin{equation}\label{dynamic-hatalpha-poly}
		\small
		\begin{aligned}
			\left[\hatalpha^{(t+1)}\right]^{-(\frac{k}{2}-2)}\overset{\text{(a)}}{\leq}&\left[\hatalpha^{(t)}\right]^{-(\frac{k}{2}-2)}-\frac{\eta_0\lambda k(k-4)}{4}\left(1-\left[\hatalpha^{(t)}\right]^2\right)+\frac{\eta_0(k-4)}{2}\cdot\left[\hatalpha^{(t)}\right]^{-(\frac{k}{2}-1)}\cdot\widehat{\xi}^{(t+1)}
			\\
			&\qquad\qquad+\frac{\eta_0(k-4)}{2}\cdot\left[\hatalpha^{(t)}\right]^{-(\frac{k}{2}-1)}\cdot\left|\bbE_t\left[\frac{1}{\left\|\hatsW^{(t)}\right\|_{\tF}}\llangle{E}^{(t+1)}, v_* v_*^{\top}-\hatalpha^{(t)}\frac{\hatsW^{(t)}}{\left\|\hatsW^{(t)}\right\|_{\tF}}\rrangle\right]\right|
			\\
			&\qquad\qquad+\frac{\eta_0^2(k-4)(k-2)\left(1+\frac{2}{k}\right)^{\frac{k}{2}}}{8}\cdot\left[\hatalpha^{(t)}\right]^{-\frac{k}{2}}\cdot\Psi_2\left(\hatsW^{(t)},\widehat{G}^{(t)},v_*,\bar{\eta}^{(t)}\right)
			\\
			&\qquad\qquad+\frac{\eta_0^2(k-4)\left(1+\frac{2}{k}\right)^{\frac{k}{2}-1}}{4}\cdot\left[\hatalpha^{(t)}\right]^{-(\frac{k}{2}-1)}\cdot\left|-\Psi_1\left(\hatsW^{(t)},\widehat{G}^{(t)},v_*,\bar{\eta}^{(t)}\right)\right|
			\\
			\overset{\text{(b)}}{\leq}&\left[\hatalpha^{(t)}\right]^{-(\frac{k}{2}-2)}-\frac{\eta_0\lambda k(k-4)}{8}+\frac{\eta_0(k-4)}{2}\cdot\left[\hatalpha^{(t)}\right]^{-(\frac{k}{2}-1)}\cdot\widehat{\xi}^{(t+1)},
		\end{aligned}
	\end{equation}
	where $\widehat{G}^{(t)}$ denotes the stochastic gradient of the risk function $\calR$ evaluated at the parameter matrix $\hatsW^{(t)}$, and $\widehat{\xi}^{(t+1)}$ is a zero-mean random term which has the following form:
	\begin{equation}\label{def-scalar-hatS}
		\begin{aligned}
			\widehat{\xi}^{(t+1)}:=&\frac{1}{\left\|\hatsW^{(t)}\right\|_{\tF}}\llangle{E}^{(t+1)}, v_* v_*^{\top}\rrangle-\frac{\hatalpha^{(t)}}{\left\|\hatsW^{(t)}\right\|_{\tF}^2}\llangle{E}^{(t+1)},\hatsW^{(t)}\rrangle\\
			&\qquad\qquad-\bbE_t\left[\frac{1}{\left\|\hatsW^{(t)}\right\|_{\tF}}\llangle{E}^{(t+1)}, v_*v_*^{\top}\rrangle-\frac{\hatalpha^{(t)}}{\left\|\hatsW^{(t)}\right\|_{\tF}^2}\llangle{E}^{(t+1)},\hatsW^{(t)}\rrangle\right],
		\end{aligned}
	\end{equation}
	(a) follows from Eq.~\eqref{dynamic-alpha-poly} and Eq.~\eqref{esti-perturb-tialpha}, and (b) is derived from combining the construction of ${E}^{(t+1)}$ which satisfies
	\begin{align}
		\frac{\eta_0\lambda k(k-4)}{32}\geq&\frac{\eta_0^2(k-4)}{2}\cdot\left[\hatalpha^{(t)}\right]^{-(\frac{k}{2}-1)}\cdot\left|\bbE_t\left[\frac{1}{\left\|\hatsW^{(t)}\right\|_{\tF}}\llangle{E}^{(t+1)}, v_* v_*^{\top}\rrangle-\frac{\hatalpha^{(t)}}{\left\|\hatsW^{(t)}\right\|_{\tF}^2}\llangle{E}^{(t+1)},\hatsW^{(t)}\rrangle\right]\right|\notag
		\\
		\geq&\frac{\eta_0^2e(k-4)d^{\frac{k}{4}-\frac{1}{2}}}{2}\cdot\left|\bbE_t\left[\frac{1}{\left\|\hatsW^{(t)}\right\|_{\tF}}\llangle{E}^{(t+1)}, v_* v_*^{\top}\rrangle-\frac{\hatalpha^{(t)}}{\left\|\hatsW^{(t)}\right\|_{\tF}^2}\llangle{E}^{(t+1)},\hatsW^{(t)}\rrangle\right]\right|,\notag
	\end{align}
	and the result of Lemma \ref{estimation} with the setting of $\eta_0$ which implicates that 
	\begin{align}
		\frac{\eta_0\lambda k(k-4)}{64}\geq&2\eta_0^2e(k-4)(k-2)\cdot\left[\hatalpha^{(t)}\right]^{-\frac{k}{2}}\cdot\left(\lambda k\left[\hatalpha^{(t)}\right]^{\frac{k}{2}-1}+\sqrt{\mathsf{c}_1}\right)^2\notag
		\\
		\geq&4\eta_0^2e(k-4)(k-2)\left(\frac{\lambda^2k^2}{d^{\frac{k}{4}-1}}+e\mathsf{c}_1d^{\frac{k}{4}}\right),\notag
		\\
		\frac{\eta_0\lambda k(k-4)}{64}\geq&8\eta_0^2e(k-4)\cdot\left[\hatalpha^{(t)}\right]^{-(\frac{k}{2}-1)}\cdot\left[\left(\lambda\left[\hatalpha^{(t)}\right]^{\frac{k}{2}}+\sqrt{\mathsf{c}_1}\right)\cdot\left(\lambda k\left[\hatalpha^{(t)}\right]^{\frac{k}{2}-1}+\sqrt{\mathsf{c}_1}\right)\right.\notag
		\\
		&\qquad\qquad\left.+\hatalpha^{(t)}\cdot\left(\lambda^2k^2\left[\hatalpha^{(t)}\right]^{k-2}+\mathsf{c}_1(d^2+1)\right)\right]\notag
		\\
		\geq&8\eta_0^2e(k-4)\left(\frac{\lambda^2k(k+1)}{d^{\frac{k}{4}}}+2\lambda k\sqrt{\mathsf{c}_1}+\mathsf{c}_1+2e\mathsf{c}_1d^{\frac{k}{4}+1}\right).\notag
	\end{align}
	Since $\widehat{\xi}^{(t+1)}$
	is bounded, we demonstrate that ${E}^{(t+1)}$ satisfies the sub-Gaussian property for all $t\in[t_1:t_2]$. Thus we have
	\begin{equation}\label{poly-hatalpha-supermartingale}
		\begin{split}
			\bbE_t\left[\exp\left\{\gamma\left(\left[\hatalpha^{(t+1)}\right]^{-(\frac{k}{2}-2)}-\left[\hatalpha^{(t)}\right]^{-(\frac{k}{2}-2)}+\frac{\eta_0\lambda k(k-4)}{8}\right)\right\}\right]\leq \exp\left\{4e\gamma^2(k-4)^2\eta_0^2\mathsf{c}_1d^{\frac{k}{2}-1}\right\},
		\end{split}
	\end{equation}
	for any $\gamma\in\bbR_+$. Applying Eqs.~\eqref{dynamic-hatalpha-poly} and \eqref{poly-hatalpha-supermartingale} to Lemma \ref{aux-martingale-concentration-subtraction}, we can establish the  probability bound for event $\calE_{t_1}^{\bar{\tau}=t_2}$ for any time pair $t_1<t_2\in[T_1]$ as
	\begin{align}\label{prob-calEc-component}
		\bbP\left(\calE_{t_1}^{\bar{\tau}=t_2}\right)\leq\exp\left\{-\frac{e^{\frac{2}{3}}-1}{16e(k-4)^2\mathsf{c}_1T_1\eta_0^2d}\right\}.
	\end{align}
	We observe that the occurrence of event $\calE_1$ is equivalent to the existence of distinct time points $1\leq t_1<t_2\leq T$ such that event $\calE_{t_1}^{\bar{\tau}=t_2}$ occurs. This observation, in conjunction with the probability bound Eq.~\eqref{prob-calEc-component} and the setting of hyper-parameters in Lemma \ref{PCA-phase-I-upper-bound}, we obtain the following probability bound for event $\calE_1$:
	\begin{align}
		\bbP\left(\calE_1\right)\leq\sum_{1\leq t_1<t_2\leq T_1}\bbP\left(\calE_{t_1}^{\bar{\tau}=t_2}\right)\leq\frac{T_1^2}{2}\exp\left\{-\frac{e^{\frac{2}{3}}-1}{16e(k-4)^2\mathsf{c}_1T_1\eta_0^2d}\right\}\notag
		\leq\frac{\delta}{6}.\notag
	\end{align}
	\noindent\textbf{Case II ($k=4$): }Assume the stopping time $\bar{\tau}$ occurs for some $t_2\in[T_1]$. According to the dynamic of $\alphat$ in Eq.~\eqref{dynamic-tialpha-original} and the boundedness estimation provided by Lemma \ref{estimation}, we claim that $\hatalpha^{(t)}$ must traverse in and out of the threshold interval $\left[\frac{4}{5}d^{-1/2},d^{-1/2}\right]$ before subceeding $\frac{4}{5}d^{-1/2}$. We aim to estimate the following probability for time pairs $t_1<t_2\in[T_1]$:
	\begin{align}
		\bbP\left(\ticalE_{t_1}^{\bar{\tau}=t_2}:=\left\{\hatalpha^{(t_1)}\geq\frac{9}{10}d^{-\frac{1}{2}}\bigcap\hatalpha^{(t_1:t_2-1)}\in\left[\frac{4}{5}d^{-\frac{1}{2}},d^{-\frac{1}{2}}\right]\bigcap\hatalpha^{(t_2)}<\frac{4}{5}d^{-\frac{1}{2}}\right\}\right).\notag
	\end{align}
	For any $t\in[t_1:t_2-1]$, we have
	\begin{align}\label{dynamic-hatalpha-linear}
		\hatalpha^{(t+1)}\overset{\text{(c)}}{\geq}&\left[1+2\eta_0\lambda \left(1-\left[\hatalpha^{(t)}\right]^2\right)\right]\hatalpha^{(t)}+\eta_0\cdot\widehat{\xi}^{(t+1)}\notag
		\\
		&\qquad\qquad-\eta_0\left|\bbE_t\left[\frac{1}{\left\|\hatsW^{(t)}\right\|_{\tF}}\llangle{E}^{(t+1)}, v_* v_*^{\top}-\hatalpha^{(t)}\frac{\hatsW^{(t)}}{\left\|\hatsW^{(t)}\right\|_{\tF}}\rrangle\right]\right|\notag
		\\
		&\qquad\qquad-\frac{\eta_0^2}{2}\left|-\Psi_1\left(\hatsW^{(t)},\widehat{G}^{(t)},v_*,\bar{\eta}^{(t)}\right)\right|\notag
		\\
		\overset{\text{(d)}}{\geq}&\left(1+\eta_0\lambda\right)\hatalpha^{(t)}+\eta_0\cdot\widehat{\xi}^{(t+1)},
	\end{align}
	where (c) follows from Eq.~\eqref{dynamic-alpha-linear}, and (d) is also derived from the result of Lemma \ref{estimation} and the setting of $\eta_0$ which implicates that
	\begin{align}
		\frac{\eta_0\lambda}{2}\hatalpha^{(t)}\geq16\eta_0^2\cdot\left[\left(4\lambda \left[\hatalpha^{(t)}\right]+\sqrt{\mathsf{c}_1}\right)^2+\hatalpha^{(t)}\cdot\left(16\lambda^2\left[\hatalpha^{(t)}\right]^{2}+\mathsf{c}_1(d^2+1)\right)\right]
	\end{align}
	Based on the analysis for the sub-Gaussian parameter of $\widehat{\xi}^{(t+1)}$ in \textbf{Case I}, we have
	\begin{align}\label{linear-hatalpha-submartingale}
		\bbE_t\left[\exp\left\{\gamma\left(\hatalpha^{(t+1)}-\left(1+\eta_0\lambda\right)\hatalpha^{(t)}\right)\right\}\right]\leq\exp\left\{8\gamma^2\eta_0^2\mathsf{c}_1\right\},
	\end{align}
	for any $\gamma\in\bbR_-$. Therefore, we can establish the probability bound for event $\ticalE_{t_1}^{\bar{\tau}=t_2}$ for any time pair $T_1<t_2\in[T_1]$ as
	\begin{align}\label{prob-linear-calEc-component}
		\bbP\left(\ticalE_{t_1}^{\bar{\tau}=t_2}\right)\leq\exp\left\{-\frac{d^{-1}}{400\eta_0\mathsf{c}_1}\right\},
	\end{align}
	by applying Eqs.~\eqref{dynamic-hatalpha-linear} and \eqref{linear-hatalpha-submartingale} to Corollary \ref{aux-coro-4}. Finally, combining the probability bound Eq.~\eqref{prob-linear-calEc-component} with the setting of hyper-parameters in Lemma \ref{PCA-phase-I-upper-bound}, we obtain the following probability bound for event $\calE_1$:
	\begin{align}
		\bbP\left(\calE_1\right)\leq\sum_{1\leq t_1<t_2\leq T_1}\bbP\left(\ticalE_{t_1}^{\bar{\tau}=t_2}\right)\leq\frac{T_1^2}{2}\exp\left\{-\frac{d^{-1}}{400\eta_0\mathsf{c}_1}\right\}\notag
		\leq\frac{\delta}{6}.\notag
	\end{align}
\end{proof}
\begin{lemma}\label{PCA-phase-I-lower-bound}
	Assume $d\geq\Omega(k)$ and $\lambda\leq\calO\left(d^{k/4}\right)$, and suppose $\eta_0\leq{\epsilon}f_1(k,d)$ ($f_1(k,d)$ is defined in Eq.~\eqref{para-lemma-upper-bound}), and $T_1\geq f_3(k,{\epsilon},d)$, and $T_1\eta_0\geq f_4(k,{\epsilon},d)$, where
	\begin{align}
		f_3(k,{\epsilon},d)=&\frac{131072\mathsf{c}_1\log(6/\delta)d^{\frac{k}{2}-1}}{{\epsilon}^2\lambda^2k^2},\quad f_4(k,{\epsilon},d)=\frac{16d^{\frac{k}{4}-1}}{{\epsilon}\lambda\max\left\{k(k-4),\log^{-1}(d)\right\}}.\notag
	\end{align}
	Under the setting of Theorem \ref{thm-phase-I-tensor-PCA}, the combined event $\calE_1^c\bigcap\calE_2$ holds with probability at most $\frac{\delta}{6}$.
\end{lemma}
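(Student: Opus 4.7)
The plan is to mirror the coupling-plus-martingale architecture of Lemma \ref{PCA-phase-I-upper-bound}, but aimed at the opposite tail: instead of preventing $\alpha^{(t)}$ from escaping below $\frac{1}{(1+1/k)d^{1/2}}$, I want to force $\alpha^{(t)}$ to cross above $1-\epsilon/2$ at some $t\in[T_1]$. The natural construction is a new constrained coupling $\widetilde{W}^{(t)}$ that agrees with $W^{(t)}$ whenever the associated index $\widetilde{\alpha}^{(t)}:=\langle v_*,\widetilde W^{(t)}v_*\rangle/\|\widetilde W^{(t)}\|_{\tF}$ lies in the window $\bigl[\frac{1}{(1+1/k)d^{1/2}},\,1-\frac{\epsilon}{2}\bigr]$, and freezes ($\widetilde W^{(t+1)}=\widetilde W^{(t)}$) as soon as $\widetilde{\alpha}^{(t)}$ exits. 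On the event $\calE_1^c\cap\calE_2$ the original $\alpha^{(t)}$ stays in the window throughout $[T_1]$ by definition, so the coupled trajectory agrees with the algorithm's trajectory throughout. Hence it suffices to upper bound the probability that $\widetilde{\alpha}^{(t)}$ remains trapped inside the window for every $t\in[T_1]$.

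For $k>4$ I would plug the dynamic of $[\widetilde{\alpha}^{(t+1)}]^{-(k/2-2)}$ from Eq.~\eqref{dynamic-tialpha-poly} into Lemma \ref{estimation} (which controls the $\Psi_1,\Psi_2$ remainders) and Lemma \ref{control-expectation} (which controls the conditional-mean bias produced by truncation onto $\calA^{(t+1)}(\delta)$), obtaining, as long as $\widetilde{\alpha}^{(t)}\le 1-\epsilon/2$,
\[
[\widetilde{\alpha}^{(t+1)}]^{-(k/2-2)}\;\le\;[\widetilde{\alpha}^{(t)}]^{-(k/2-2)}-c\,\eta_0\lambda k(k-4)\,\epsilon+\eta_0\,\widetilde{\xi}^{(t+1)},
\]
where $\widetilde{\xi}^{(t+1)}$ is a zero-mean bounded-increment (hence sub-Gaussian) martingale difference with proxy parameter on the order of $\sqrt{\mathsf{c}_1 d^{k/2-1}}$. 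Choosing $\eta_0\lesssim\epsilon f_1(k,d)$ ensures every $O(\eta_0^2)$ residual is absorbed into a constant fraction of the drift, exactly as in the proof of Lemma \ref{PCA-phase-I-upper-bound}. Summing and applying Lemma \ref{aux-martingale-concentration-subtraction} then yields, with probability at least $1-\delta/6$,
\[
[\widetilde{\alpha}^{(T_1)}]^{-(k/2-2)}\;\le\;d^{(k-4)/4}-c\,T_1\eta_0\lambda k(k-4)\epsilon+C\sqrt{T_1\eta_0^2\,\mathsf{c}_1 d^{k/2-1}\log(1/\delta)}.
\]
The hypothesis $T_1\eta_0\ge f_4(k,\epsilon,d)$ makes the deterministic drift swallow the initial value $d^{(k-4)/4}$, and $T_1\ge f_3(k,\epsilon,d)$ makes the stochastic term dominated by that drift; the resulting bound $[\widetilde{\alpha}^{(T_1)}]^{-(k/2-2)}<(1-\epsilon/2)^{-(k/2-2)}$ contradicts the trapping assumption $\widetilde{\alpha}^{(T_1)}\le 1-\epsilon/2$, so the trapping probability is at most $\delta/6$.

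For $k=4$ I would work directly with Eq.~\eqref{dynamic-tialpha-original}: inside the window one gets $\widetilde{\alpha}^{(t+1)}\ge(1+\eta_0\lambda)\widetilde{\alpha}^{(t)}+\eta_0\widetilde{\xi}^{(t+1)}$, so the mean dynamic amplifies $\widetilde{\alpha}$ geometrically from its starting magnitude $\Theta(d^{-1/2})$. Applying the symmetric martingale inequality Corollary \ref{aux-coro-4}, with probability at least $1-\delta/6$ the amplified trajectory crosses the $(1-\epsilon/2)$ upper barrier within $T_1$ steps whenever $T_1\eta_0\lambda\gtrsim\log(d)$, which is precisely what the assumption $T_1\eta_0\ge f_4(k,\epsilon,d)$ supplies in the regime $k=4$. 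The main obstacle, as in Lemma \ref{PCA-phase-I-upper-bound}, is the bookkeeping needed to reduce the iteration to a clean drift-plus-sub-Gaussian recursion: one must verify uniformly on the window that the $\Psi_1,\Psi_2$ residuals from Lemma \ref{estimation}, the conditional-mean bias coming from $\indicator_{\calA^{(t+1)}(\delta)}$, and the per-step sub-Gaussian parameter of $\widetilde{\xi}^{(t+1)}$ are all small enough to invoke Lemma \ref{aux-martingale-concentration-subtraction} or Corollary \ref{aux-coro-4}. This is the same calculation as in Lemma \ref{PCA-phase-I-upper-bound}, only with the inequality direction reversed, which is why the identical step-size constraint $f_1(k,d)$ and symmetric thresholds $f_3,f_4$ resurface in the hypothesis.
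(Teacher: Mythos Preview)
Your $k>4$ argument is essentially the paper's: same coupling-plus-drift-plus-Azuma architecture applied to $[\alpha^{(t)}]^{-(k/2-2)}$, and the constants you read off from $f_3,f_4$ match. The one technical wrinkle is your choice to \emph{freeze} $\widetilde W^{(t)}$ once it leaves the window. When frozen, $v^{t+1}=v^t$, so the hypothesis $\bbE_t[e^{\lambda(v^{t+1}+\eta_t-v^t)}]\le e^{\lambda^2 a_t^2/2}$ of Lemma~\ref{aux-martingale-concentration-subtraction} fails (the LHS equals $e^{\lambda\eta_t}$, which is not sub-Gaussian in $\lambda$). The paper sidesteps this by letting the coupled scalar continue to decay deterministically by exactly the drift amount $\eta_0\epsilon\lambda k(k-4)/8$ once outside the window; that preserves the supermartingale inequality globally and lets Lemma~\ref{aux-martingale-concentration-subtraction} apply unconditionally. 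Your version is fixable (set the increment and $\eta_t$ to zero after freezing and apply Azuma to the resulting martingale, then intersect with the ``trapped'' event), but as written it does not plug into Lemma~\ref{aux-martingale-concentration-subtraction} directly.

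The $k=4$ case has a genuine gap. Corollary~\ref{aux-coro-4} is a \emph{lower}-tail bound for a process constrained to $[0,v^{(0)}]$; it tells you a geometrically amplified sequence does not fall below $\gamma v^{(0)}$. Here you need the opposite: starting from $\widetilde\alpha^{(0)}=d^{-1/2}$, you must show the process climbs to $1-\epsilon/2\gg v^{(0)}$. Invoking Corollary~\ref{aux-coro-4} gives no such statement. Moreover, unrolling the recursion $\widetilde\alpha^{(t+1)}\ge(1+c\eta_0\lambda\epsilon)\widetilde\alpha^{(t)}+\eta_0\widetilde\xi^{(t+1)}$ directly does not help either, because the noise term $\sum_t(1+c\eta_0\lambda\epsilon)^{T_1-1-t}\eta_0\widetilde\xi^{(t+1)}$ has variance that blows up at the \emph{same} geometric rate $(1+c\eta_0\lambda\epsilon)^{2T_1}$ as the signal, so the signal-to-noise ratio never improves. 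The paper's fix is to pass to $\log\breve\alpha^{(t)}$: the multiplicative drift becomes the additive drift $\log(1+\eta_0\lambda\epsilon/2)\asymp\eta_0\lambda\epsilon$, the noise picks up only a bounded factor $[\breve\alpha^{(t)}]^{-1}\lesssim d^{1/2}$, and then Lemma~\ref{aux-martingale-concentration-subtraction} applies to $-\log\breve\alpha^{(t)}$ with initial value $\frac12\log d$ and target $-\log(1-\epsilon/2)$. This log transformation is the missing ingredient in your $k=4$ sketch.
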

\begin{proof}
	The proof of Lemma \ref{PCA-phase-I-lower-bound} will be established through categorizing the following two cases and analyzing the probability bound respectively.
	
	\noindent\textbf{Case I ($k>4$): } In this part, we begin the proof by introducing a coupling process.
	\begin{definition}\label{def-breveW-1}
		Let $\left\{\Wt\right\}_{t=0}^T$ be a Markov chain in $\bbR^{d\times d}$ adapted to filtration $\left\{\calF^{(t)}\right\}_{t=0}^T$. The coupling process $\left\{\left[\breve{\alpha}^{(t)}\right]^{-(\frac{k}{2}-2)}\right\}_{t=0}^{T_1}$ with initialization $\left[\breve{\alpha}^{(0)}\right]^{-(\frac{k}{2}-2)}=\left[\alpha^{(0)}\right]^{-(\frac{k}{2}-2)}$ evolves as
		\begin{enumerate}
			\item Updating state: If event $\breve{\calE}\left(\breve{\alpha}^{(t)}\right):=\left\{\calE\left(\breve{\alpha}^{(t)}\right)\bigcap\breve{\alpha}^{(t)}<1-\frac{{\epsilon}}{2}\right\}$ holds, let $\left[\breve{\alpha}^{(t+1)}\right]^{-(\frac{k}{2}-2)}=$ $\left[\alpha^{(t+1)}\right]^{-(\frac{k}{2}-2)}$,
			\item Decaying state: Otherwise, let $\left[\breve{\alpha}^{(t+1)}\right]^{-(\frac{k}{2}-2)}=\left[\breve{\alpha}^{(t)}\right]^{-(\frac{k}{2}-2)}-\frac{\eta_0{\epsilon}\lambda k(k-4)}{8}$.
		\end{enumerate}
	\end{definition}
	We aim to demonstrate that $\left[\breve{\alpha}^{(t)}\right]^{-(\frac{k}{2}-2)}+\frac{t\eta_0{\epsilon}\lambda k(k-4)}{8}$ is a supermartingale. 
	If event $\breve{\calE}\left(\breve{\alpha}^{(t)}\right)^c$ holds, we directly obtain $\left[\breve{\alpha}^{(t+1)}\right]^{-(\frac{k}{2}-2)}\leq -\frac{\eta_0{\epsilon}\lambda k(k-4)}{8}+\left[\breve{\alpha}^{(t)}\right]^{-(\frac{k}{2}-2)}$. Otherwise, we have
	\begin{align}
		\left[\breve{\alpha}^{(t+1)}\right]^{-(\frac{k}{2}-2)}=&\left[\alpha^{(t+1)}\right]^{-(\frac{k}{2}-2)}\notag
		\\
		\overset{\text{(a)}}{\leq}&\left[\alpha^{(t)}\right]^{-(\frac{k}{2}-2)}-\frac{\eta_0{\epsilon}\lambda k(k-4)}{8}+\frac{\eta_0(k-4)}{2}\cdot\left[\alpha^{(t)}\right]^{-(\frac{k}{2}-1)}\cdot\xi^{(t+1)}\notag
		\\
		\overset{\text{(b)}}{=}&\left[\breve{\alpha}^{(t)}\right]^{-(\frac{k}{2}-2)}-\frac{\eta_0{\epsilon}\lambda k(k-4)}{8}+\frac{\eta_0(k-4)}{2}\cdot\left[\breve{\alpha}^{(t)}\right]^{-(\frac{k}{2}-1)}\cdot\xi^{(t+1)},\notag
	\end{align}
	where $\xi^{(t+1)}$ is a zero-mean random variable defined analogously to Eq.~\eqref{def-scalar-hatS}, with $\hatsW^{(t)}$ and $\hatalpha^{(t)}$
	substituted for $\Wt$ and $\alphat$, respectively. Here, (a) is derived from Eq.~\eqref{dynamic-hatalpha-poly}, and (b) relies on the temporal exclusivity property that if event $\breve{\calE}\left(\breve{\alpha}^{(t)}\right)^c$ occurs at time $t$, then $\breve{\calE}\left(\breve{\alpha}^{(t')}\right)$ is permanently excluded for all subsequent times $t' > t$. Therefore, based on the supermartingale, we obtain
	\begin{align}\label{lower-case-I}
		\bbP\left(\calE_1^c\bigcap\calE_2\right)\leq&\bbP\left(\breve{\alpha}^{(T_1)}<1-\frac{{\epsilon}}{2}\bigcap\calE\left(\breve{\alpha}^{(T_1)}\right)\right)=\bbP\left(\left[\breve{\alpha}^{(T_1)}\right]^{-(\frac{k}{2}-2)}>\left(1-\frac{{\epsilon}}{2}\right)^{-\frac{k-4}{2}}\bigcap\calE\left(\breve{\alpha}^{(T_1)}\right)\right)\notag
		\\
		\overset{\text{(c)}}{\leq}&\exp\left\{-\frac{\left(\frac{T_1\eta_0{\epsilon}\lambda k(k-4)}{8}+\left(1-\frac{{\epsilon}}{2}\right)^{-\frac{k-4}{2}}-d^{\frac{k}{4}-1}\right)^2}{4e(k-4)^2\mathsf{c}_1T_1\eta_0^2d^{\frac{k}{2}-1}}\right\}\notag
		\\
		\overset{\text{(d)}}{\leq}&\exp\left\{-\frac{T_1{\epsilon}^2\lambda^2k^2}{1024e\mathsf{c}_1d^{\frac{k}{2}-1}}\right\}\overset{\text{(e)}}{\leq}\frac{\delta}{6},
	\end{align}
	where (c) is derived from applying the estimation of $\xi^{(t+1)}$ below:
	\begin{align}
		\left|\xi^{(t+1)}\right|\leq4\sqrt{\mathsf{c}_1},\notag
	\end{align}
	which implicates that $\xi^{(t+1)}$ is sub-Gaussian with parameter $4\sqrt{\mathsf{c}_1}$, to Lemma \ref{aux-martingale-concentration-subtraction}. Moreover, since $T_1\eta_0\geq 16d^{\frac{k-4}{4}}\left({\epsilon}\lambda k(k-4)\right)^{-1}$ and $T_1\geq1024e\mathsf{c}_1(\lambda{\epsilon}k)^{-2}\log(6\delta^{-1})d^{\frac{k}{2}-1}$, we obtain inequalities (d) and (e). 
	
	\noindent\textbf{Case II ($k=4$): }In this part, we also begin the proof by introducing a coupling process.
	\begin{definition}\label{def-breveW-2}
		Let $\left\{\Wt\right\}_{t=0}^T$ be a Markov chain in $\bbR^{d\times d}$ adapted to filtration $\left\{\calF^{(t)}\right\}_{t=0}^T$. The coupling process $\left\{\breve{\alpha}^{(t)}\right\}_{t=0}^{T_1}$ with initialization $\breve{\alpha}^{(0)}=\alpha^{(0)}$ evolves as
		\begin{enumerate}
			\item Updating state: If event $\breve{\calE}\left(\breve{\alpha}^{(t)}\right):=\left\{\calE\left(\breve{\alpha}^{(t)}\right)\bigcap\breve{\alpha}^{(t)}<1-\frac{{\epsilon}}{2}\right\}$ holds, let $\breve{\alpha}^{(t+1)}=\alpha^{(t+1)}$,
			\item Decaying state: Otherwise, let $\breve{\alpha}^{(t+1)}=\left(1+\eta_0\lambda {\epsilon}/2\right)\breve{\alpha}^{(t)}$.
		\end{enumerate}
	\end{definition}
	We aim to demonstrate that $-t\log\left(1+\eta_0\lambda {\epsilon}/2\right)+\log\left(\breve{\alpha}^{(t)}\right)$ is a submartingale. If event $\breve{\calE}\left(\breve{\alpha}^{(t)}\right)^c$ holds, we directly obtain $\bbE_t\left[\log\left(\breve{\alpha}^{(t+1)}\right)\right]\geq\log(1+\eta_0\lambda {\epsilon}/2)+\log\left(\breve{\alpha}^{(t)}\right)$. Otherwise, we obtain
	\begin{align}
		\log\left(\breve{\alpha}^{(t+1)}\right)=&\log\left(\alpha^{(t+1)}\right)\notag
		\\
		\overset{\text{(f)}}{\geq}&\log\left(\left(1+\eta_0\lambda{\epsilon}\right)\alpha^{(t)}+\eta_0\cdot{\xi}^{(t+1)}\right)\notag
		\\
		=&\log\left(\alpha^{(t)}\right)+\log\left(1+\eta_0\lambda{\epsilon}+\frac{\eta_0}{\alpha^{(t)}}{\xi}^{(t+1)}\right)\notag
		\\
		\overset{\text{(g)}}{\geq}&\log\left(\alpha^{(t)}\right)+\log\left(1+\eta_0\lambda{\epsilon}\right)+\frac{1}{1+\eta_0\lambda{\epsilon}}\cdot\frac{\eta_0}{\alpha^{(t)}}{\xi}^{(t+1)}-\frac{\eta_0^2}{\left[\alpha^{(t)}\right]^2}\left[{\xi}^{(t+1)}\right]^2\notag
		\\
		\overset{\text{(h)}}{\geq}&\log\left(\alpha^{(t)}\right)+\log\left(1+\frac{\eta_0\lambda{\epsilon}}{2}\right)+\frac{1}{1+\eta_0\lambda{\epsilon}}\cdot\frac{\eta_0}{\alpha^{(t)}}{\xi}^{(t+1)}\notag
		\\
		=&\log\left(\breve{\alpha}^{(t)}\right)+\log\left(1+\frac{\eta_0\lambda{\epsilon}}{2}\right)+\frac{1}{1+\eta_0\lambda{\epsilon}}\cdot\frac{\eta_0}{\breve{\alpha}^{(t)}}{\xi}^{(t+1)},
	\end{align}
	where (f) is derived from Eq.~\eqref{dynamic-hatalpha-linear}, (g) relies on the Taylor expansion of function $f(x):=\log(1+\eta_0\lambda{\epsilon}+x)$, and (h) is obtained from the following inequality
	\begin{align}
		\log\left(1+\eta_0\lambda{\epsilon}\right)-\log\left(1+\frac{\eta_0\lambda{\epsilon}}{2}\right)\geq\frac{\eta_0\lambda{\epsilon}}{4}\geq\frac{\eta_0^2}{\left[\alpha^{(t)}\right]^2}\left[{\xi}^{(t+1)}\right]^2.\notag
	\end{align}
	Therefore, based on the submartingale, we have
	\begin{align}
		\bbP\left(\calE_1^c\bigcap\calE_2\right)\leq&\bbP\left(\breve{\alpha}^{(T_1)}<1-\frac{{\epsilon}}{2}\bigcap\calE(\breve{\alpha}^{(T_1)})\right)=\bbP\left(\breve{\alpha}^{(T_1)}<\left(1-\frac{{\epsilon}}{2}\right)\bigcap\calE(\breve{\alpha}^{(t)})\right)\notag
		\\
		\overset{\text{(i)}}{\leq}&\exp\left\{-\frac{\left(\frac{T_1\eta_0{\epsilon}\lambda }{4}+\log\left(\breve{\alpha}^{(0)}\right)-\log\left(1-\frac{{\epsilon}}{2}\right)\right)^2}{128\mathsf{c}_1T_1\eta_0^2d}\right\}\notag
		\\
		\overset{\text{(j)}}{\leq}&\exp\left\{-\frac{T_1{\epsilon}^2\lambda^2}{8192\mathsf{c}_1d}\right\}\overset{\text{(k)}}{\leq}\frac{\delta}{6},
	\end{align}
	where (i) is derived from Lemma \ref{aux-martingale-concentration-subtraction} with that $\alpha^{(t)}$ is lower-bounded by $\frac{4}{5}d^{-1/2}$ and $\widehat{\xi}^{(t+1)}$ is sub-Gaussian with parameter $4\sqrt{\mathsf{c}_1}$ under $\calE_1^c\bigcap\calE_2$. Moreover, since $T_1\eta_0\geq8\log(d)({\epsilon}\lambda)^{-1}$ and $T_1\geq8192\mathsf{c}_1$ $(\lambda{\epsilon})^{-2}\log(6\delta^{-1})d$, we obtain (j) and (k).
\end{proof}
\begin{lemma}\label{PCA-phase-I-lower-bound-last-iterate}
	Assume $d\geq\Omega(k)$ and $\lambda\leq\calO\left(d^{k/4}\right)$, and suppose 
	$$
	\eta_0\leq\min\left\{\frac{{\epsilon}\left(1-{\epsilon}\right)^{\frac{k}{2}-1}}{2}\cdot f_1(k,d),\frac{\lambda k\left(1-{\epsilon}\right)^{\frac{k}{2}}{\epsilon}^2}{256\mathsf{c}_1\log\left(\frac{3T_1^2}{\delta}\right)}\right\}.
	$$ 
	Under the setting of Theorem \ref{thm-phase-I-tensor-PCA}, the combined event $\calE_1^c\bigcap\calE_2^c\bigcap\calE_3$ holds with probability at most $\frac{\delta}{6}$.
\end{lemma}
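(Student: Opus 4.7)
}
The plan is to mirror the two-case (i.e., $k>4$ versus $k=4$) coupling--plus--martingale--concentration strategy already used in Lemmas \ref{PCA-phase-I-upper-bound} and \ref{PCA-phase-I-lower-bound}, but adapted to the boundary region close to $1$. Under $\calE_1^c\cap\calE_2^c\cap\calE_3$, the iterates live in $[(1+1/k)^{-1}d^{-1/2},1]$ throughout $[T_1]$, they exceed $1-\epsilon/2$ at least once, and yet $\alpha^{(T_1)}<1-\epsilon$. Consequently, there must exist a pair $t_1<t_2$ in $[T_1]$ with $\alpha^{(t_1)}\geq 1-\epsilon/2$, $\alpha^{(t_1+1:t_2-1)}\in[1-\epsilon,1-\epsilon/2)$, and $\alpha^{(t_2)}<1-\epsilon$. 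It therefore suffices to upper bound the probability of each such ``downward crossing'' event $\calD_{t_1,t_2}$ by a quantity of order $\exp\!\bigl(-\Theta(\epsilon^2/(T_1\eta_0^2\mathsf{c}_1))\bigr)$, because summing over the $\binom{T_1}{2}$ pairs and invoking the step-size condition on $\eta_0$ yields the desired $\delta/6$ bound after a union bound.

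\paragraph{Case $k>4$.}
On $\calD_{t_1,t_2}$, apply the dynamic \eqref{dynamic-tialpha-poly} to $[\alpha^{(t)}]^{-(k/2-2)}$ on $t\in[t_1:t_2-1]$. Two crucial structural observations drive the analysis:
\begin{enumerate}
\item[(i)] On the excursion interval $\alpha^{(t)}\in[1-\epsilon,1]$, the deterministic drift $-\frac{\eta_0\lambda k(k-4)}{4}(1-[\alpha^{(t)}]^2)$ is non-positive and, after taking signs into account, actually \emph{pushes $[\alpha^{(t)}]^{-(k/2-2)}$ downward}, i.e., against the direction required by the crossing event.
\item[(ii)] The choice $\eta_0\leq\tfrac{\epsilon(1-\epsilon)^{k/2-1}}{2}f_1(k,d)$ guarantees, via Lemma \ref{estimation}, that the $\eta_0^2\Psi_1,\Psi_2$ residuals and the conditional-mean residual supplied by Lemma \ref{control-expectation} are uniformly dominated by $\tfrac{\eta_0\lambda k(k-4)}{16}$, exactly as in the proof of Lemma \ref{PCA-phase-I-upper-bound}.
\end{enumerate}
Consequently, $[\alpha^{(t)}]^{-(k/2-2)}$ is a supermartingale up to a bounded-increment martingale noise term $\eta_0\xi^{(t+1)}$ that is sub-Gaussian with parameter $O(\eta_0\sqrt{\mathsf{c}_1})$ (the same $\xi^{(t+1)}$ appearing in \eqref{def-scalar-hatS}, with $|\xi^{(t+1)}|\lesssim\sqrt{\mathsf{c}_1}$). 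To trigger $\calD_{t_1,t_2}$, the martingale part must accumulate a gap of at least $(1-\epsilon)^{-(k/2-2)}-(1-\epsilon/2)^{-(k/2-2)}\gtrsim(k-4)\epsilon(1-\epsilon)^{-(k/2-1)}$. Applying Lemma \ref{aux-martingale-concentration-subtraction} yields
\[
\bbP(\calD_{t_1,t_2})\ \leq\ \exp\!\left\{-\frac{(k-4)^2\epsilon^2(1-\epsilon)^{-(k-2)}}{C\cdot T_1\eta_0^2(k-4)^2\mathsf{c}_1}\right\}\ \leq\ \frac{\delta}{3T_1^2},
\]
provided $\eta_0$ satisfies the second bound in the hypothesis.

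\paragraph{Case $k=4$.}
Work instead with $\log\alpha^{(t)}$ (or $\alpha^{(t)}$ directly) using the linear dynamic \eqref{dynamic-tialpha-original}, exactly as in Case II of Lemma \ref{PCA-phase-I-lower-bound}. The signal drift $\frac{\eta_0\lambda k}{2}[\alpha^{(t)}]^{k/2-1}(1-[\alpha^{(t)}]^2)$ is non-negative on $\alpha^{(t)}\in[1-\epsilon,1-\epsilon/2]$, so again the drift opposes the crossing. The Taylor expansion of $\log(1+x)$ (as done at steps (g)--(h) of Lemma \ref{PCA-phase-I-lower-bound}) turns $\log\alpha^{(t)}$ into a sub-martingale modulo a bounded, sub-Gaussian martingale difference $\frac{\eta_0}{\alpha^{(t)}}\xi^{(t+1)}$; the denominator $\alpha^{(t)}\geq 1-\epsilon$ is now bounded away from zero, which actually makes the concentration stronger than in Lemma \ref{PCA-phase-I-lower-bound}. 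The required drop in $\log\alpha^{(t)}$ is $\log\frac{1-\epsilon/2}{1-\epsilon}\gtrsim\epsilon$, and Lemma \ref{aux-martingale-concentration-subtraction} again delivers the $\exp(-\Theta(\epsilon^2/(T_1\eta_0^2\mathsf{c}_1)))$ bound.

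\paragraph{Main obstacle.}
The delicate part is the calibration of $\eta_0$. We need three simultaneous inequalities: the drift-dominates-residuals condition (handled by the first branch of the $\min$ in the hypothesis, via Lemma \ref{estimation} and the $(1-\epsilon)^{k/2-1}$ factor that controls $[\alpha^{(t)}]^{-(k/2-1)}$ on the excursion interval); the concentration radius $\sqrt{T_1\eta_0^2\mathsf{c}_1\log(T_1^2/\delta)}$ being smaller than the required gap $\epsilon(1-\epsilon)^{-(k/2-1)}$ (handled by the second branch, after using $T_1\eta_0\asymp\log(T_1)/\lambda$ inherited from the previous two lemmas); and the martingale-difference tail bound (holding uniformly thanks to the truncation event $\calA^{(t+1)}(\delta)$ from Lemma \ref{lemma:high-prob}). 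Once these three ingredients are balanced, the stated probability bound follows directly from the union bound over $t_1<t_2\in[T_1]$.
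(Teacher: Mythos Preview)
Your overall decomposition into ``downward-crossing'' events and the union bound over pairs $(t_1,t_2)$ is correct, but there is a genuine gap in the concentration step. You observe that the drift pushes against the crossing and then effectively discard it, writing the exponent as $(k-4)^2\epsilon^2(1-\epsilon)^{-(k-2)}\big/(C\,T_1\eta_0^2(k-4)^2\mathsf{c}_1)\asymp \epsilon^2/(T_1\eta_0^2\mathsf{c}_1)$. To close the union bound you would need $T_1\eta_0^2\mathsf{c}_1\lesssim \epsilon^2/\log(3T_1^2/\delta)$. The second branch of the hypothesis only gives $\eta_0\mathsf{c}_1\lesssim \lambda k(1-\epsilon)^{k/2}\epsilon^2/\log(3T_1^2/\delta)$, so you would also need $T_1\eta_0\lesssim 1/(\lambda k)$. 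But under the setting of Theorem~\ref{thm-phase-I-tensor-PCA} one has $T_1\eta_0\asymp d^{k/4-1}/\big(\lambda\epsilon\max\{k(k-4),\log^{-1}(d)\}\big)$, which for $k>4$ is of order $d^{k/4-1}/(\lambda\epsilon k^2)\gg 1/(\lambda k)$. Hence the bound, as written, does not imply $\bbP(\calD_{t_1,t_2})\le \delta/(3T_1^2)$.

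The missing idea is that on the excursion window $\alpha^{(t)}\in[1-\epsilon,1-\epsilon/4]$ the quantity $1-\alpha^{(t)}$ satisfies a \emph{multiplicative} contraction, not merely a non-negative drift. The paper works directly with $1-\alpha^{(t)}$ for all $k\ge 4$ (no case split is needed here): from Eq.~\eqref{dynamic-tialpha-original} and Lemma~\ref{estimation}, under the first branch of the $\eta_0$ hypothesis one gets
\[
1-\alpha^{(t+1)}\ \le\ \Bigl(1-\tfrac{\eta_0\lambda k(1-\epsilon)^{k/2}}{2}\Bigr)\bigl(1-\alpha^{(t)}\bigr)-\eta_0\,\xi^{(t+1)},
\]
and then applies the contraction-based concentration Lemma~\ref{aux-martingale-concentration} (not Lemma~\ref{aux-martingale-concentration-subtraction}). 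The geometric discounting makes the accumulated variance $\sum_j a_j^2\prod_i(1-\eta_i)^2\lesssim \eta_0\mathsf{c}_1/\big(\lambda k(1-\epsilon)^{k/2}\big)$, independent of $t_2-t_1$, and yields $\bbP(\calD_{t_1,t_2})\le\exp\!\bigl\{-\lambda k(1-\epsilon)^{k/2}\epsilon^2/(256\eta_0\mathsf{c}_1)\bigr\}$, which is precisely what the second branch of the $\eta_0$ hypothesis controls. Your transformations to $[\alpha^{(t)}]^{-(k/2-2)}$ and $\log\alpha^{(t)}$ were needed in Lemmas~\ref{PCA-phase-I-upper-bound}--\ref{PCA-phase-I-lower-bound} because $\alpha$ was small; near $1$ they are unnecessary and obscure the contraction that makes the argument close.
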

\begin{proof}
	If $\calE_1^c\bigcap\mathcal{E}_{2}^c$ occurs, there exists $t \in [T_1]$ such that $\alpha^{(t)}\geq 1-\frac{{\epsilon}}{2}$. 
	For some $t_0\in[T_1]$, define $\hat{\tau}_{1}(t_0)$ as the stopping time satisfying $\alpha^{(\hat{\tau}_{1}(t_0))}\geq 1-\frac{{\epsilon}}{2}$ as:
	\begin{equation}\nonumber
		\hat{\tau}_{1}(t_0)=\inf_{t\geq t_0}\left \{ t:\alpha^{(t)}\geq 1-\frac{{\epsilon}}{2}\right\}.
	\end{equation}
	We also define $\hat{\tau}_{2}(t_0)$ as the stopping time satisfying $\alpha^{(\hat{\tau}_{2}(t_0))}<1-{\epsilon}$ after $\hat{\tau}_{1}(t_0)$ as:
	\begin{equation}\nonumber
		\hat{\tau}_{2}(t_0) =\inf_{t>\hat{\tau}_{1}(t_0)}\left \{ t:\alpha^{(t)}<1-{\epsilon}\right\}.
	\end{equation}
	According to the dynamic provided in Eq.~\eqref{dynamic-tialpha-original} and the setting of $\eta_0$, there exists $t_0\in[T_1]$ such that $\alpha^{(t)} (t\geq t_0)$ must traverse in and out of the threshold interval $\left[1-{\epsilon},1-\frac{{\epsilon}}{4}\right]$ before subceeding $1-{\epsilon}$. We aim to estimate the following probability for time pairs $t_1<t_2\in[T_1]$:
	\begin{equation}\nonumber
		\bbP\left(\calE _{\hat{\tau}_1(t_0)=t_1}^{\hat{\tau}_2(t_0)=t_2}:=\left\{\alpha^{(t_1)}\geq1-\frac{{\epsilon}}{2}\bigcap\alpha^{(t_1:t_2)}\in\left[1-{\epsilon},1-\frac{{\epsilon}}{4}\right]\bigcap\alpha^{(t_2)}<1-{\epsilon}\right\}\right).
	\end{equation}
	For any $t\in[t_1:t_2-1]$, we have 
	\begin{align}\label{martingale-concen-lower-bound}
		1-\alpha^{(t+1)}=&1-\alpha^{(t)}-\frac{\eta_0\lambda k}{2}\cdot\left[\alpha^{(t)}\right]^{\frac{k}{2}-1}\cdot\left(1+\alpha^{(t)}\right)\cdot\left(1-\alpha^{(t)}\right)\notag
		\\
		&\qquad\qquad-\eta_0\left[\frac{1}{\left\|\Wt\right\|_{\tF}}\llangle{E}^{(t+1)}, v_* v_*^{\top}\rrangle-\frac{\alpha^{(t)}}{\left\|\Wt\right\|_{\tF}^2}\llangle{E}^{(t+1)},\Wt\rrangle\right]\notag
		\\
		&\qquad\qquad-\frac{\eta_0^2}{2}\Psi_1\left(\Wt,G^{(t)},v_*,\bar{\eta}^{(t)}\right)\notag
		\\
		\overset{\text{(a)}}{\leq}&\left(1-\frac{\eta_0\lambda k(1-{\epsilon})^{\frac{k}{2}}}{2}\right)\left(1-\alpha^{(t)}\right)-\eta_0{\xi}^{(t+1)},
	\end{align}
	where (a) is derived from combining Eq.~\eqref{esti-Psi1} with the setting of $\eta_0$ which implicates that 
	\begin{equation}\label{concentrate-dynamic-condition}
		\begin{aligned}
			\frac{\eta_0{\epsilon}\lambda k(1-{\epsilon})^{\frac{k}{2}-1}}{16}\geq&48\eta_0^2\left(\lambda^2k^2+\mathsf{c}_1d^2\right),
			\\
			\frac{\eta_0{\epsilon}\lambda k(1-{\epsilon})^{\frac{k}{2}-1}}{16}\geq&\eta_0\left|\bbE\left[\frac{1}{\left\|\Wt\right\|_{\tF}}\llangle{E}^{(t+1)}, v_* v_*^{\top}\rrangle-\frac{\alpha^{(t)}}{\left\|\Wt\right\|_{\tF}^2}\llangle{E}^{(t+1)},\Wt\rrangle\right]\right|,
		\end{aligned}
	\end{equation}
	Since ${\xi}^{(t+1)}$ is sub-Gaussian with parameter $4\sqrt{\mathsf{c}_1}$, we establish the probability bound for event $\calE_{\hat{\tau}_1(t_0)=t_1}^{\hat{\tau}_2(t_0)=t_2}$ with any time pair $t_1<t_2\in[T_1]$ as
	\begin{align}\label{PCA-lower-bound}
		\bbP\left(\calE_{\hat{\tau}_1(t_0)=t_1}^{\hat{\tau}_2(t_0)=t_2}\right)\leq\exp\left\{-\frac{\lambda k(1-{\epsilon})^{\frac{k}{2}}{\epsilon}^2}{256\eta_0\mathsf{c}_1}\right\},
	\end{align} 
	by combining Lemma \ref{aux-martingale-concentration}. Therefore, we have
	\begin{align}
		\bbP\left(\calE_1^c\bigcap\calE_2^c\bigcap\calE_3\right)\leq\sum_{1\leq t_1<t_2\leq T_1}\bbP\left(\calE_{\hat{\tau}_1(t_0)=t_1}^{\hat{\tau}_2(t_0)=t_2}\right)\leq\frac{T_1^2}{2}\exp\left\{-\frac{\lambda k(1-{\epsilon})^{\frac{k}{2}}{\epsilon}^2}{256\eta_0\mathsf{c}_1}\right\}\leq\frac{\delta}{6},\notag
	\end{align}
	where the second inequality is derived from Eq.~\eqref{PCA-lower-bound} and the last inequality follows from the setting of $\eta_0$.
\end{proof}

\subsection{Proof of the Second Phase (Lemma \ref{phase-II-high-probability} and Theorem \ref{phase-II-convergence-thm})}
\noindent\textbf{Part I: The Union Bound of $\alpha^{(t)} (T_1\leq t\leq T)$.} In this part, we construct a compressed supermartingale sequence by adapting the technique from Lemma \ref{PCA-phase-I-lower-bound-last-iterate}. Leveraging the compression property of this sequence and the sub-Gaussian nature of its increments, we apply a concentration inequality to derive a uniform bound for $\left\{\alphat\right\}_{t=T_1}^T$. 
Recall the scalar event $\ticalE(\cdot)$ is defined as follows:
\begin{align}
	\ticalE(\gamma):=\left\{\gamma\in[1-\bar{\epsilon},1]\right\},\notag
\end{align}
where $\bar{\epsilon}=\frac{3}{2}\epsilon$ with $\epsilon$ has been defined in the first phase.
\begin{proof}[Proof of Lemma \ref{phase-II-high-probability}]
	We define $\hat{\tau}_3$ as the stopping time satisfying $\alpha^{(T_1+\hat{\tau}_3)}<1-\bar{\epsilon}$ as:
	\begin{align}
		\hat{\tau}_3=\inf_{t\geq0}\left\{t:\alpha^{(T_1+t)}<1-\bar{\epsilon}\right\}.\notag
	\end{align}
	Suppose there exists $t_2\in[T-T_1]$ such that $\hat{\tau}_3=t_2$, $\alpha^{(T_1+t)}$ must traverse in and out of the threshold interval $\left[1-\bar{\epsilon},1-\frac{\epsilon}{4}\right]$ before subceeding $1-\bar{\epsilon}$. We aim to estimate the following probability for time pairs $t_1<t_2\in[T-T_1]$ as:
	\begin{align}
		\bbP\left(\calE_{t_1}^{\hat{\tau}_3=t_2}:=\left\{\alpha^{(T_1+t_1)}\geq1-\epsilon\bigwedge\alpha^{(T_1+t_1:T_1+t_2)}\in\left[1-\bar{\epsilon},1-\frac{\epsilon}{4}\right]\bigwedge\alpha^{(T_1+t_2)}<1-\bar{\epsilon}\right\}\right).\notag
	\end{align}
	According to the dynamics of $\alpha^{(t)}$ provided by Eq.~\eqref{dynamic-tialpha-original}, we have
	\begin{align}
		1-\alpha^{(t+1)}=&1-\alpha^{(t)}-\frac{\etat\lambda k\left[\alpha^{(t)}\right]^{\frac{k}{2}-1}}{2}\cdot\left(1+\alpha^{(t)}\right)\cdot\left(1-\alpha^{(t)}\right)\notag
		\\
		&\qquad\qquad-\etat\left[\frac{1}{\left\|\Wt\right\|_{\tF}}\llangle E^{(t+1)}, v_* v_*^{\top}\rrangle-\frac{\alpha^{(t)}}{\left\|\Wt\right\|_{\tF}^2}\llangle E^{(t+1)},\Wt\rrangle\right]\notag
		\\
		&\qquad\qquad-\frac{\left[\etat\right]^2}{2}\Psi_1\left(\Wt,G^{(t)},v_*,\bar{\eta}^{(t)}\right)\notag
		\\
		\overset{\text{(a)}}{\leq}&\left(1-\frac{\etat\lambda k(1-\bar{\epsilon})^{\frac{k}{2}}}{2}\right)\left(1-\alpha^{(t)}\right)-\etat{\xi}^{(t+1)},\notag
	\end{align}
	for any $t\geq T_1$, where ${\xi}^{(t+1)}$ has the following form:
	\begin{equation}
		\begin{aligned}
			{\xi}^{(t+1)}:=&\frac{1}{\left\|\Wt\right\|_{\tF}}\llangle E^{(t+1)}, v_* v_*^{\top}\rrangle-\frac{\alpha^{(t)}}{\left\|\Wt\right\|_{\tF}^2}\llangle E^{(t+1)},\Wt\rrangle\\
			&-\bbE_t\left[\frac{1}{\left\|\Wt\right\|_{\tF}}\llangle E^{(t+1)}, v_*v_*^{\top}\rrangle-\frac{\alpha^{(t)}}{\left\|\Wt\right\|_{\tF}^2}\llangle E^{(t+1)},\Wt\rrangle\right],\notag
		\end{aligned}
	\end{equation}
	and (a) is derived from combining the result of Lemma \ref{estimation} with the setting of $\eta^{(T_1)}$ which implicates that Eq.~\eqref{concentrate-dynamic-condition} holds. Since $\widetilde{\xi}^{(t+1)}$ is sub-Gaussian with parameter $4\sqrt{\mathsf{c}_1}$, we establish the probability bound for event $\calE_{t_1}^{\hat{\tau}_3=t_2}$ with any time pair $t_1<t_2\in[T-T_1]$ as
	\begin{align}\label{PCA-lower-bound-Phase-II}
		\bbP\left(\calE_{t_1}^{\hat{\tau}_3=t_2}\right)\leq\exp\left\{-\frac{\lambda k(1-\bar{\epsilon})^{\frac{k}{2}}\epsilon^2}{128\eta_0\mathsf{c}_1}\right\},
	\end{align} 
	by combining Lemma \ref{aux-martingale-concentration}. Therefore, we have
	\begin{align}
		\sum_{0\leq t_1<t_2\leq T-T_1}\bbP\left(\calE_{t_1}^{\hat{\tau}_3=t_2}\right)\leq&\frac{T^2}{2}\exp\left\{-\frac{\lambda k(1-\bar{\epsilon})^{\frac{k}{2}}\epsilon^2}{128\eta_0\mathsf{c}_1}\right\}\leq\frac{\delta}{2},
	\end{align}
	where the last inequality follows from the setting of $\eta_0$.
\end{proof}

\noindent \textbf{Part II: Linear Approximation of the Dynamic of Objective Parameter Estimator.} Lemma \ref{phase-II-high-probability} illustrates that the output of Algorithm \ref{SGA} after $T_1$ iterations lies in the neighborhood of the ground truth $ v_* v_*^{\top}$, namely, $\alphat\in[1-\bar{\epsilon},1]$ for any $t\in[T_1:T]$ with high probability. Thus, we set the annealing rate to guarantee the output of Algorithm \ref{SGA} fully converge to $ v_*v_*^{\top}$ in the second phase. Before we formally propose Theorem \ref{converge-phase-II}, we preliminarily introduce some of the coupling process, auxiliary function, and notations used for our statement of Theorem \ref{converge-phase-II} and analysis in \textbf{Part II}. Letting $T_2:=T-T_1$, we introduce the truncated coupling $\left\{\olinesW^{(t)}\right\}_{t=0}^{T_2}$ with initialization $\olinesW^{(0)}=W^{(0)}$ as follows:
\begin{align}
	\begin{cases}
		\olinesW^{(t+1)}=W^{(T_1+t+1)}, & \text{ if }\ticalE\left(\alpha^{(T_1+t)}\right)\text{ occurs},
		\\
		\olinesW^{(\tau+1)}= v_{*,\perp} \left(v_{*,\perp}\right)^{\top},\, \, \, \forall \tau\geq t, & \text{ otherwise}.
	\end{cases}\notag
\end{align}
Moreover, we define the auxiliary function $\psi:\bbR^{d\times d}\rightarrow\bbR^{d\times d}$ as:
\begin{align}
	\psi(W)=\begin{cases}
		W, & \text{ if }\ticalE\left(\frac{\llangle v_*,W v_*\rrangle}{\|W\|_{\tF}}\right)\text{ occurs},
		\\
		v_*v_*^{\top}, & \text{ otherwise}.
	\end{cases}\notag
\end{align}
We construct the truncated sequence $\left\{ O^{(t)}=\psi\left(\olinesW^{(t)}\right)\right\}_{t=0}^{T_2}$. In this part, our analysis primarily focuses
on the trajectory of $\beta^{(t)}:=\frac{\llangle v_*, O^{(t)} v_*\rrangle}{\left\| O^{(t)}\right\|_{\tF}}$, which depends on that of $\alpha^{(T_1+t)}$. In \textbf{Part I}, we have proved that $\bigcap_{t=T_1}^T\ticalE\left(\alpha^{(t)}\right)$ occurs with high probability, which implies the
truncated sequence $\left\{ O^{(t)}\right\}_{t=1}^{T_2}$ aligned to $\left\{W^{(T_1+t)}\right\}_{t=1}^{T_2}$ with high probability. Then we approximate the update process of $\left\{\beta^{(t)}\right\}_{t=1}^{T_2}$ to SGD in traditional linear regression, with respective bounds of variance term and bias term.
 
In essence, the sequence $\left\{\beta^{(t)}\right\}_{t=0}^{T_2}$ constitutes a truncated version of $\left\{\alpha^{(t)}\right\}_{t=0}^{T_2}$. Based on the generation mechanism of the sequence $\left\{ O^{(t)}\right\}_{t=0}^{T_2}$, the update from $\beta^{(t)}$ to $\beta^{(t+1)}$ can be categorized into two cases: \emph{Case I}) $\beta^{(t+1)}$ remains updated, which has the following form
\begin{align}
	\beta^{(t+1)}=&\beta^{(t)}+\frac{\etat\lambda k}{2}\left(1-\left[\beta^{(t)}\right]^2\right)\left[\beta^{(t)}\right]^{\frac{k}{2}-1}\notag
	\\
	&\qquad\qquad+\etat\left[\frac{1}{\left\|O^{(t)}\right\|_{\tF}}\llangle{E}^{(t+1)}, v_* v_*^{\top}\rrangle-\frac{\beta^{(t)}}{\left\|O^{(t)}\right\|_{\tF}^2}\llangle{E}^{(t+1)},O^{(t)}\rrangle\right]\notag
	\\
	&\qquad\qquad+\frac{\left[\etat\right]^2}{2}\Psi_1\left(O^{(t)},G^{(t)},v_*,\bar{\eta}^{(t)}\right),\notag
\end{align}
where $G^{(t)}$ denotes the stochastic gradient of the risk function $\calR$ evaluated at the parameter matrix $O^{(t)}$, and function $\Psi_1$ has been defined in Eq.~\eqref{psi-1}, and $\bar{\eta}^{(t)} \in [0, \etat]$ being dependent on $(O^{(t)}, \Gt, \etat)$. \emph{Case II}) For any $\tau\geq t$, $\beta^{(t)}$ does not update and remains constant at 1.

For simplicity, we define $\bbR\ni\widehat{\beta}^{(t)}:=1-\beta^{(t)}$ and $\bbR\ni f_{\beta^{(t)}}:=\frac{\lambda k}{2}\left(1+\beta^{(t)}\right)\left[\beta^{(t)}\right]^{\frac{k}{2}-1}$. The following is the formalized expression of the iteration process for $\widehat{\beta}^{(t)}$. For all $t\in\{0\}\bigcup[T_2-1]$, if $O^{(t+1)}=W^{(T_1+t+1)}$, $\widehat{\beta}^{(t+1)}$ follows the update rule as:
\begin{align}\label{update}
	\widehat{\beta}^{(t+1)}=\left(1-\etat f_{\beta^{(t)}}\right)\widehat{\beta}^{(t)}+\etat\olinesg^{(t)}+\left[\etat\right]^2\tisg^{(t)},
\end{align}
where $\olinesg^{(t)}$ and $\tisg^{(t)}$ have the following definitions for any $t\in[0:T_2]$:
\begin{align}
	\begin{cases}
		\olinesg^{(t)}:=\frac{\beta^{(t)}}{\left\|O^{(t)}\right\|_{\tF}^2}\llangle{E}^{(t+1)},O^{(t)}\rrangle-\frac{1}{\left\|O^{(t)}\right\|_{\tF}}\llangle{E}^{(t+1)}, v_* v_*^{\top}\rrangle,
		\\
		\\
		\tisg^{(t)}:=-\frac{1}{2}\Psi_1\left(O^{(t)},G^{(t)},v_*,\bar{\eta}^{(t)}\right),
	\end{cases}\notag
\end{align}
Otherwise, we have
\begin{align}\label{non-update}
	\widehat{\beta}^{(\tau+1)}=0,\quad \forall\tau\geq t.
\end{align}
By combining Eq.~\eqref{update} with Eq.~\eqref{non-update}, the iterative update of $\left[\widehat{\beta}^{(t)}\right]^2$ can be expressed as:
\begin{align}\label{non-expect-recursion}
	\left[\widehat{\beta}^{(t+1)}\right]^2\leq&\left[\left(1-\etat f_{\beta^{(t)}}\right)\widehat{\beta}^{(t)}+\etat\olinesg^{(t)}+\left[\etat\right]^2\tisg^{(t)}\right]^2\cdot\mathds{1}_{O^{(t)}=W^{(T_1+t)}}\notag
	\\
	\leq&\left\{\left(1-\etat f_{\beta^{(t)}}\right)^2\left[\widehat{\beta}^{(t)}\right]^2+2\left(1-\etat f_{\beta^{(t)}}\right)\widehat{\beta}^{(t)}\left(\etat\olinesg^{(t)}+\left[\etat\right]^2\tisg^{(t)}\right)\right.\notag
	\\
	&\qquad\qquad\left.+2\left(\left[\etat\right]^2\left[\olinesg^{(t)}\right]^2+\left[\etat\right]^4\left[\tisg^{(t)}\right]^2\right)\right\}\cdot\mathds{1}_{O^{(t)}=W^{(T_1+t)}}.
\end{align}
Therefore, we can derive the following iterative relations for $\left\{\bbE\left[\left[\widehat{\beta}^{(t)}\right]^2\right]\right\}_{t=0}^{T_2}$ under Assumption \ref{ass-2}:
\begin{align}\label{main-recursion}
	\bbE\left[\left[\widehat{\beta}^{(t+1)}\right]^2\right]\leq&\left(1-\etat\lambda k\left(1-\bar{\epsilon}\right)^{\frac{k}{2}}\right)\bbE\left[\left[\widehat{\beta}^{(t)}\right]^2\right]+\left[\etat\right]^3g^{(t)}, 
\end{align}
where $g^{(t)}=\frac{1}{\left[\etat\right]^3T^6}+\frac{8192(\mathsf{c}_0^2k^2\sigma^4+\lambda^4k^4+\mathsf{c}_1^2d^4)}{\lambda k(1-\bar{\epsilon})^{\frac{k}{2}}}$.
\begin{proof}[Proof of Eq.~\eqref{main-recursion}]
	According to the recursive dynamic of $\left[\widehat{\beta}^{(t)}\right]^2$ in Eq.~\eqref{non-expect-recursion}, we obtain
	\begin{align}\label{esti-hatbeta}
		\bbE_t\left[\left[\widehat{\beta}^{(t+1)}\right]^2\right]\leq&\left(1-\etat f_{\beta^{(t)}}\right)^2\left[\widehat{\beta}^{(t)}\right]^2+2\etat\left(1-\etat f_{\beta^{(t)}}\right)\widehat{\beta}^{(t)}\bbE_t\left[\olinesg^{(t)}\right]\notag
		\\
		&\qquad\qquad+2\left[\etat\right]^2\left(1-\etat f_{\beta^{(t)}}\right)\widehat{\beta}^{(t)}\bbE_t\left[\tisg^{(t)}\right]\notag
		\\
		&\qquad\qquad+2\left(\left[\etat\right]^2\bbE_t\left[\left[\olinesg^{(t)}\right]^2\right]+\left[\etat\right]^4\bbE_t\left[\left[\tisg^{(t)}\right]^2\right]\right)\notag
		\\
		\overset{\text{(b)}}{\leq}&\left(1-\etat f_{\beta^{(t)}}\right)^2\left[\widehat{\beta}^{(t)}\right]^2+\left[\etat\right]^2\left(1-\etat f_{\beta^{(t)}}\right)^2\left[\widehat{\beta}^{(t)}\right]^2+\left(\bbE_t\left[\olinesg^{(t)}\right]\right)^2\notag
		\\
		&\qquad\qquad+\frac{\etat\lambda k(1-\bar{\epsilon})^{\frac{k}{2}}}{2}\left(1-\etat f_{\beta^{(t)}}\right)^2\left[\widehat{\beta}^{(t)}\right]^2\notag
		\\
		&\qquad\qquad+4\left(\left[\etat\right]^2\bbE_t\left[\left[\olinesg^{(t)}\right]^2\right]+\frac{\left[\etat\right]^3}{\lambda k(1-\bar{\epsilon})^{\frac{k}{2}}}\bbE_t\left[\left[\tisg^{(t)}\right]^2\right]\right)\notag
		\\
		\overset{\text{(c)}}{\leq}&\left(1-\frac{5\etat}{4}\lambda k(1-\bar{\epsilon})^{\frac{k}{2}}\right)\left[\widehat{\beta}^{(t)}\right]^2+\frac{1}{2T^6}\notag
		\\
		&\qquad\qquad+4\left(\left[\etat\right]^2\bbE_t\left[\left[\olinesg^{(t)}\right]^2\right]+\frac{\left[\etat\right]^3}{\lambda k(1-\bar{\epsilon})^{\frac{k}{2}}}\bbE_t\left[\left[\tisg^{(t)}\right]^2\right]\right)\notag
		\\
		\overset{\text{(d)}}{=}&\left(1-\frac{5\etat}{4}\lambda k(1-\bar{\epsilon})^{\frac{k}{2}}\right)\left[\widehat{\beta}^{(t)}\right]^2+\frac{1}{T^6}\notag
		\\
		&\qquad\qquad+4\left(\frac{c_0k\sigma^2\left[\etat\right]^2}{2}\widehat{\beta}^{(t)}\left(1+\beta^{(t)}\right)+\frac{\left[\etat\right]^3}{\lambda k(1-\bar{\epsilon})^{\frac{k}{2}}}\bbE_t\left[\left[\tisg^{(t)}\right]^2\right]\right)\notag
		\\
		\leq&\left(1-\etat\lambda k(1-\bar{\epsilon})^{\frac{k}{2}}\right)\left[\widehat{\beta}^{(t)}\right]^2+\frac{1}{T^6}\notag
		\\
		&\qquad\qquad+\frac{4\left[\etat\right]^3}{\lambda k(1-\bar{\epsilon})^{\frac{k}{2}}}\cdot\left(c_0^2k^2\sigma^4\left(1+\beta^{(t)}\right)^2+\bbE_t\left[\left[\tisg^{(t)}\right]^2\right]\right)\notag
		\\
		\overset{\text{(e)}}{\leq}&\left(1-\etat\lambda k(1-\bar{\epsilon})^{\frac{k}{2}}\right)\left[\widehat{\beta}^{(t)}\right]^2+\frac{1}{T^6}+\frac{8192\left[\etat\right]^3}{\lambda k (1-\bar{\epsilon})^{\frac{k}{2}}}\cdot\left(c_0^2k^2\sigma^4+\lambda^4k^4+\mathsf{c}_1^2d^4\right),
	\end{align}
	where (b) is obtained from Cauchy-Schwarz inequality: $2ab\leq a^2+b^2$ for any scalars $a,b\in\bbR$ and $(\bbE[g])^2\leq\bbE[g^2]$ for any random variable $g\in\bbR$, and (c) follows from the definition of $f_{{\beta^{(t)}}}$ and the fact that $\left|\bbE[\olinesg_t]\right|\leq{T^{-3}/2}$ since Lemma \ref{control-expectation} and the scale of $\mathsf{c}_1$. The inequality (d) relies on the following second-moment bound:
	\begin{align}
		\bbE_t\left[\left[\olinesg^{(t)}\right]^2\right]\overset{\text{(f)}}{\leq}&\bbE_t\left[\left(\frac{\beta^{(t)}}{\left\|O^{(t)}\right\|_{\tF}^2}\llangle E^{(t+1)},O^{(t)}\rrangle-\frac{1}{\left\|{O^{(t)}}\right\|_{\tF}}\llangle E^{(t+1)},v_*v_*^{\top}\rrangle\right)^2\right]+{\frac{1}{2T^6}}\notag
		\\
		=&\underbrace{\frac{1}{\left\|{O^{(t)}}\right\|_{\tF}^2}\bbE_t\left[\llangle E^{(t+1)},v_*v_*^{\top}\rrangle^2\right]}_{\calI^{(t)}}-\underbrace{\frac{2\beta^{(t)}}{\left\|{O^{(t)}}\right\|_{\tF}^3}\bbE_t\left[\llangle E^{(t+1)},v_*v_*^{\top}\rrangle\llangle E^{(t+1)},{O^{(t)}}\rrangle\right]}_{\calII^{(t)}}\notag
		\\
		&\qquad\qquad+\underbrace{\frac{\left[\beta^{(t)}\right]^2}{\left\|{O^{(t)}}\right\|_{\tF}^4}\bbE_t\left[\llangle E^{(t+1)},{O^{(t)}}\rrangle^2\right]}_{\calIII^{(t)}}+{\frac{1}{2T^6}}\notag
		\\
		=&\frac{c_0k\sigma^2}{2}\left(1-\left[{\beta^{(t)}}\right]^2\right)+{\frac{1}{2T^6}},\notag
	\end{align}
	where the random matrix $E^{(t+1)}$
	is defined explicitly in Eq.~\eqref{eq:errort} and excludes the truncation function $\mathds{1}_{\calA^{(t+1)}(\delta)}$, and (f) is derived from Lemma \ref{aux-truncation-subGaussian-second-moment} and the scale of $\mathsf{c}_1$. Moreover, $\calI^{(t)}$, $\calII^{(t)}$ and $\calIII_{3}^{(t)}$ have the following estimation
	\begin{align}
		\calI^{(t)}=&\frac{1}{\left\|{O^{(t)}}\right\|_{\tF}^{k-2}}\bbE_t\left[\left(\sum_{i=1}^{\frac{k}{2}}\llangle\upE^{(t+1)},\left[{O^{(t)}}\right]^{\otimes(i-1)}\otimes v_*v_*^{\top}\otimes \left[{O^{(t)}}\right]^{\otimes(\frac{k}{2}-i)}\rrangle\right)^2\right]\notag
		\\
		&+\frac{(k-4)^2\left[{\beta^{(t)}}\right]^2}{4\left\|{O^{(t)}}\right\|_{\tF}^{k}}\bbE_t\left[\llangle\upE^{(t+1)},\left[{O^{(t)}}\right]^{\otimes\frac{k}{2}}\rrangle^2\right]\notag
		\\
		&-\frac{(k-4){\beta^{(t)}}}{\left\|{O^{(t)}}\right\|_{\tF}^{k-1}}\bbE_t\left[\sum_{i=1}^{\frac{k}{2}}\llangle\upE^{(t+1)},\left[{O^{(t)}}\right]^{\otimes(i-1)}\otimes v_*v_*^{\top}\otimes \left[{O^{(t)}}\right]^{\otimes(\frac{k}{2}-i)}\rrangle\cdot\llangle\upE^{(t+1)},\left[{O^{(t)}}\right]^{\otimes\frac{k}{2}}\rrangle\right]\notag
		\\
		=&\mathsf{c}_0\sigma^2\left\{\frac{k^2}{4}-\frac{k^2-16}{4}\left[{\beta^{(t)}}\right]^2+\frac{k}{2}\left(\frac{k}{2}-1\right)\left(\left[{\beta^{(t)}}\right]^2-1\right)\right\},\notag
		\\
		\calII^{(t)}=&\frac{2{\beta^{(t)}}}{\left\|{O^{(t)}}\right\|_{\tF}^{k-1}}\bbE_t\left[\sum_{i=1}^{\frac{k}{2}}\llangle\upE^{(t+1)},\left[{O^{(t)}}\right]^{\otimes(i-1)}\otimes v_*v_*^{\top}\otimes \left[{O^{(t)}}\right]^{\otimes(\frac{k}{2}-i)}\rrangle\cdot\sum_{i=1}^{\frac{k}{2}}\llangle\upE^{(t+1)},\left[{O^{(t)}}\right]^{\otimes\frac{k}{2}}\rrangle\right]\notag
		\\
		&-\frac{(k-4){\beta^{(t)}}}{\left\|{O^{(t)}}\right\|_{\tF}^{k-1}}\bbE_t\left[\sum_{i=1}^{\frac{k}{2}}\llangle\upE^{(t+1)},\left[{O^{(t)}}\right]^{\otimes(i-1)}\otimes v_*v_*^{\top}\otimes \left[{O^{(t)}}\right]^{\otimes(\frac{k}{2}-i)}\rrangle\cdot\llangle \upE^{(t+1)},\left[{O^{(t)}}\right]^{\otimes\frac{k}{2}}\rrangle\right]\notag
		\\
		&-\frac{k(k-4)\left[{\beta^{(t)}}\right]^2}{2\left\|{O^{(t)}}\right\|_{\tF}^{k}}\bbE_t\left[\llangle\upE^{(t+1)},\left[{O^{(t)}}\right]^{\otimes\frac{k}{2}}\rrangle^2\right]+\frac{(k-4)^2\left[{\beta^{(t)}}\right]^2}{2\left\|{O^{(t)}}\right\|_{\tF}^k}\bbE_t\left[\llangle\upE^{(t+1)},\left[{O^{(t)}}\right]^{\otimes\frac{k}{2}}\rrangle^2\right]\notag
		\\
		=&2\mathsf{c}_0\sigma^2\left[\beta^{(t)}\right]^2\left[\frac{k^2}{4}+\frac{(k-4)^2}{4}-\frac{k(k-4)}{2}\right]=8\mathsf{c}_0\sigma^2\left[{\beta^{(t)}}\right]^2,\notag
		\\
		\calIII^{(t)}=&\frac{\left[{\beta^{(t)}}\right]^2}{\left\|{O^{(t)}}\right\|_{\tF}^k}\left(\frac{k^2}{4}-\frac{k(k-4)}{2}+\frac{(k-4)^2}{4}\right)\bbE_t\left[\llangle\upE^{(t+1)},\left[{O^{(t)}}\right]^{\otimes\frac{k}{2}}\rrangle^2\right]=4\mathsf{c}_0\sigma^2\left[{\beta^{(t)}}\right]^2.\notag
	\end{align}
	Finally, the inequality (e) is derived from the fact that $\beta_{t}\leq 1$ and applying Eq.~\eqref{esti-Psi1} to $\tisg^{(t)}$. Taking the expectation of both sides of Eq.~\eqref{esti-hatbeta} directly yields Eq.~\eqref{main-recursion}.
\end{proof}
\begin{proof}[Proof of Theorem \ref{phase-II-convergence-thm}]
Leveraging the recursive relation for $\bbE\left[\left[\widehat{\beta}^{(t)}\right]^2\right]$ specified in Eq.~\eqref{main-recursion}, we define two auxiliary processes $\left\{V^{(t)}\right\}_{t=0}^{T_2}$ and $\left\{B^{(t)}\right\}_{t=0}^{T_2}$ to complete the proof of Theorem \ref{converge-phase-II}:
\begin{align}
	V^{(t+1)}=&\left(1-\etat\lambda k(1-\bar{\epsilon})^{\frac{k}{2}}\right)V^{(t)}+\left[\etat\right]^3g^{(t)},\label{variance-square-hat-beta}
	\\
	B^{(t+1)}=&\left(1-\etat\lambda k(1-\bar{\epsilon})^{\frac{k}{2}}\right)B^{(t)},\label{bias-square-hat-beta}
\end{align}
with initialization $V^{(0)}=0$ and $B^{(0)}=\left[\widehat{\beta}^{(0)}\right]^2$, where $g^{(t)}$ follows the definition in Eq.~\eqref{main-recursion}. Therefore, we can obtain
\begin{align}\label{main-error}
	\bbE\left[\left[\widehat{\beta}^{(T_2)}\right]^2\right]\leq V^{(T_2)}+B^{(T_2)}.
\end{align}
\begin{theorem}\label{converge-phase-II}
	Under the setting of Lemma \ref{phase-II-high-probability}, we have
	\begin{align}
		\bbE\left[\left[\widehat{\beta}^{(T_2)}\right]^2\right]\leq \left(1-\frac{\eta_0\lambda k(1-\bar{\epsilon})^{\frac{k}{2}}}{2}\right)^{T_1}\bar{\epsilon}^2+\frac{4\lceil\log(T)\rceil\eta_{0}}{\lambda^2k^2(1-\bar{\epsilon})^{k}T^4}+\frac{32768\lceil\log(T)\rceil(\mathsf{c}_0^2k^2\sigma^4+\lambda^4k^4+\mathsf{c}_1^2d^4)\eta_{0}}{\lambda^3k^3(1-\bar{\epsilon})^{\frac{3k}{2}}T}.\notag
	\end{align}
\end{theorem}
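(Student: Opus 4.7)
The plan is to separately control the bias-like sequence $\{B^{(t)}\}$ and the variance-like sequence $\{V^{(t)}\}$ introduced in Eqs.~\eqref{bias-square-hat-beta} and \eqref{variance-square-hat-beta}, and then combine them through the decomposition $\bbE\bigl[[\widehat{\beta}^{(T_2)}]^2\bigr] \leq V^{(T_2)} + B^{(T_2)}$ recorded in Eq.~\eqref{main-error}.

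For $B^{(T_2)}$, the recursion is purely multiplicative, so unrolling gives $B^{(T_2)} = B^{(0)}\prod_{t=0}^{T_2-1}\bigl(1-\eta^{(t)}\lambda k(1-\bar{\epsilon})^{k/2}\bigr)$. On the high-probability event from Lemma~\ref{phase-II-high-probability}, $\alpha^{(T_1)} \geq 1-\bar{\epsilon}$ implies $B^{(0)} = (1-\beta^{(0)})^2 \leq \bar{\epsilon}^2$. Every factor in the product lies in $[0,1]$, so I retain only the first block of length $T_1$ (on which $\eta^{(t)} = \eta_0$) to obtain $B^{(T_2)} \leq \bar{\epsilon}^2(1-\eta_0\lambda k(1-\bar{\epsilon})^{k/2})^{T_1}$, and then use the trivial weakening $(1-x)^{T_1} \leq (1-x/2)^{T_1}$ valid for $x\in[0,1]$ to match the form stated in the theorem.

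For $V^{(T_2)}$, I unroll Eq.~\eqref{variance-square-hat-beta} into
\begin{align*}
V^{(T_2)} \;=\; \sum_{t=0}^{T_2-1}[\eta^{(t)}]^3 g^{(t)}\prod_{s=t+1}^{T_2-1}\bigl(1-\eta^{(s)}\lambda k(1-\bar{\epsilon})^{k/2}\bigr),
\end{align*}
split $g^{(t)} = T^{-6}[\eta^{(t)}]^{-3} + G$ with $G \asymp (\mathsf{c}_0^2 k^2\sigma^4+\lambda^4 k^4+\mathsf{c}_1^2 d^4)/(\lambda k(1-\bar{\epsilon})^{k/2})$, and partition $[0:T_2-1]$ into the $J \asymp \lceil\log(T)\rceil$ consecutive sub-phases of length $T_1$ on which the step size is held at $\eta_j = \eta_0/2^j$. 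The within-phase geometric sum telescopes via $\sum_{r \geq 0}(1-\eta_j\lambda k(1-\bar{\epsilon})^{k/2})^r \leq (\eta_j\lambda k(1-\bar{\epsilon})^{k/2})^{-1}$, so the contribution of sub-phase $j$ to $V^{(T_2)}$ is bounded, modulo the carry-over contraction from later sub-phases, by a constant multiple of $\eta_j^2 G/(\lambda k(1-\bar{\epsilon})^{k/2})$ together with a $T^{-6}$ floor piece. Because the step-size prescription in Eq.~\eqref{eq:cond-eta-even-k} enforces $\eta_j T_1 \gtrsim \lceil\log(T)\rceil/(\lambda k)$, each subsequent sub-phase contributes a contraction factor of order $T^{-1}$, so the dominant contribution arises from the final sub-phase with $\eta_{J-1} \asymp \eta_0/T$; summing the $\lceil\log(T)\rceil$ per-phase bounds, the $T^{-6}$ floor contributes the $\lceil\log(T)\rceil\eta_0/(\lambda^2 k^2(1-\bar{\epsilon})^k T^4)$ term and $G$ contributes the remaining noise term.

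The main obstacle is the phase-level bookkeeping required to show that the across-phase $T^{-1}$ contraction dominates the $\eta_j^{-1}$ blow-up produced when collapsing the within-phase geometric sum, so that the sum over all $J$ sub-phases collapses into a single $\lceil\log(T)\rceil/T$ factor rather than the naive $\sum_j 2^j$ blow-up. I would handle this by first establishing a per-phase recursion of the form $V^{(\text{end of phase }j)} \leq (1-\eta_j\lambda k(1-\bar{\epsilon})^{k/2})^{T_1}\,V^{(\text{start of phase }j)} + C_j$ with $C_j \asymp \eta_j^2 G/(\lambda k(1-\bar{\epsilon})^{k/2})$, and then chaining via induction on $j$ while exploiting $(1-\eta_j\lambda k(1-\bar{\epsilon})^{k/2})^{T_1} \lesssim T^{-1}$ from the step-size condition. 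Adding the resulting bound on $V^{(T_2)}$ to the bias bound on $B^{(T_2)}$ then delivers the inequality stated in the theorem.
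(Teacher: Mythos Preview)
Your treatment of the bias sequence $B^{(T_2)}$ is essentially the paper's Lemma~\ref{esti-B}: unroll the multiplicative recursion, keep only the first block of $T_1$ factors, and bound $B^{(0)}\le\bar\epsilon^2$. That part is fine.

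The variance argument, however, rests on a claim that does not hold. You assert that the step-size prescription forces $\eta_j T_1\gtrsim\lceil\log T\rceil/(\lambda k)$ for \emph{every} phase $j$, and hence that each phase delivers a contraction $(1-\eta_j\lambda k(1-\bar\epsilon)^{k/2})^{T_1}\lesssim T^{-1}$. But $\eta_j=\eta_0/2^j$ decays geometrically while $T_1$ is fixed, so $\eta_j c T_1=\eta_0 c T_1/2^j$ with $c=\lambda k(1-\bar\epsilon)^{k/2}$. The step-size condition in Eq.~\eqref{eq:cond-eta-even-k} gives only $\eta_0 c T_1\asymp\log T$ (up to polylog factors), so already at $j=1$ the per-phase contraction is of order $T^{-1/2}$, and for $j$ near $L\approx\log T$ it is essentially $1$. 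Your induction ``while exploiting $(1-\eta_j c)^{T_1}\lesssim T^{-1}$'' therefore breaks down precisely in the late phases, which is where the $C_j$ you want to kill are largest (since collapsing the within-phase geometric sum produces a $1/(\eta_j c)$ factor). The related claim $\eta_{J-1}\asymp\eta_0/T$ is also off: with $L\approx\log T$ phases one gets $2^{L}\asymp T^{\ln 2}$, not $T$.

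The paper sidesteps this by not attempting a uniform per-phase contraction bound. Instead, after reducing to the expression in Eq.~\eqref{eq-variance-I}, it recognizes the bracketed sum as $f(\eta^{(T_1)} c)/c$ for the auxiliary function $f$ and invokes \cite[Lemma~C.3]{wu2022last}, which shows $f(x)\le 8/T_1$ uniformly. That lemma balances two regimes: in early phases the accumulated downstream contraction $\prod_{i>l}(1-x/2^i)^{T_1}$ is small, while in late phases the prefactor $\tfrac{x}{2^l}\bigl(1-(1-x/2^l)^{T_1}\bigr)$ is itself $O(x T_1/4^l)$; neither regime alone gives $T^{-1}$ per phase, but their interplay yields the $8/T_1$ bound. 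To repair your argument you would need to reproduce this two-regime analysis rather than rely on a single uniform contraction rate.
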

\noindent \textbf{Bound of $V^{(T_2)}$: }Lemma \ref{esti-V} provides a uniform upper bound for $V^{(t)}$ over $t\in[0:T_2]$.
\begin{lemma}\label{esti-V}
	Under the setting of Lemma \ref{phase-II-high-probability}, define the step size $\etat$ satisfies $\etat = \eta^{(T_1)} \cdot 2^{-\lfloor t / T_1 \rfloor}$ for $t \in [0, T_2]$. Then we obtain
	\begin{align}
		V^{(T_2)}\leq\frac{8\lceil\log(T)\rceil\eta^{(T_1)}}{\lambda^2k^2(1-\bar{\epsilon})^{k}T^4}+\frac{65536\lceil\log(T)\rceil(\mathsf{c}_0^2k^2\sigma^4+\lambda^4k^4+\mathsf{c}_1^2d^4)\eta^{(T_1)}}{\lambda^3k^3(1-\bar{\epsilon})^{\frac{3k}{2}}T}.
	\end{align}
\end{lemma}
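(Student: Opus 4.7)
The plan is to unroll the linear recursion in Eq.~\eqref{variance-square-hat-beta} in closed form and then exploit the phase structure induced by the geometric step-size schedule. Abbreviate $\mu := \lambda k (1-\bar{\epsilon})^{k/2}$, set $J := \lceil \log T \rceil - 1$, and for phase $j$ (i.e., relabeled steps $t \in [jT_1, (j+1)T_1)$) let $\eta_j := \eta^{(T_1)}/2^j$ and $C := 8192(\mathsf{c}_0^2 k^2 \sigma^4 + \lambda^4 k^4 + \mathsf{c}_1^2 d^4)/\mu$, so that the per-step noise injection $[\eta^{(t)}]^3 g^{(t)}$ equals $T^{-6} + \eta_j^3 C$ on phase $j$.

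Within a single phase, the recursion is first-order linear with constant coefficient $1-\eta_j\mu$ and constant drive, so solving it over $T_1$ steps and bounding $1-(1-\eta_j\mu)^{T_1}\le 1$ on the steady-state term yields
\begin{equation*}
V^{((j+1)T_1)} \le r_j\, V^{(jT_1)} + \frac{1}{\eta_j\mu T^6} + \frac{\eta_j^2 C}{\mu}, \qquad r_j := (1-\eta_j\mu)^{T_1}.
\end{equation*}
Iterating from $V^{(0)}=0$ across all $J$ phases gives the closed form
\begin{equation*}
V^{(T_2)} \le \sum_{j=0}^{J-1}\Bigl(\frac{1}{\eta_j\mu T^6}+\frac{\eta_j^2 C}{\mu}\Bigr)\Phi_j, \qquad \Phi_j := \prod_{\ell=j+1}^{J-1} r_\ell.
\end{equation*}

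Next I would bound the tail contraction by $\Phi_j \le \exp(-\mu T_1\sum_{\ell>j}\eta_\ell) = \exp\!\bigl(-\eta^{(T_1)}\mu T_1(2^{-j}-2^{-(J-1)})\bigr)$. The hypotheses on $\eta_0$ inherited from Theorem~\ref{thm-phase-I-tensor-PCA} and Lemma~\ref{phase-II-high-probability}, together with $T_1=\lfloor T/\log T\rfloor$, force $\eta^{(T_1)}\mu T_1\gtrsim \log T$, so $\Phi_j$ is super-geometrically small as soon as $2^j$ drops below $\eta^{(T_1)}\mu T_1$. Substituting $\eta_j=\eta^{(T_1)}/2^j$ turns the two series into explicit sums dominated by a $\Theta(1)$-width window of phases near the transition index $j^\star \asymp \log_2(\eta^{(T_1)}\mu T_1)$; the second series then produces the target term $\log T\cdot\eta^{(T_1)}(\mathsf{c}_0^2 k^2\sigma^4+\lambda^4 k^4+\mathsf{c}_1^2 d^4)/(\mu^3 T)$ after substituting $C$, and the first produces the target term $\log T\cdot\eta^{(T_1)}/(\mu^2 T^4)$.

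The main technical obstacle is this balancing step: as $j$ grows, $\eta_j$ halves and the within-phase noise $\eta_j^2 C/\mu$ shrinks, but $r_j$ simultaneously weakens toward $1$. Naively replacing $\Phi_j$ by $1$ loses the $1/T$ factor whenever $\eta^{(T_1)}\mu T\gg \log T$, which is the operative regime for tensor PCA, while keeping only the last phase drops the $\log T$ prefactor. The correct accounting splits the sum at $j^\star$, where the contraction flips from super-geometrically small to order $1$, and exploits the fact that the final step size $\eta_{J-1}\asymp \eta^{(T_1)}/T$ pairs with the surrounding $\Theta(\log T)$-sized window of marginally contracting phases to produce the declared scaling.
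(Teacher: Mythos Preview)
Your approach differs from the paper's in a meaningful way. Both of you unroll the recursion and decompose into the $L\approx\log T$ geometric phases, but the paper does not carry out any balancing argument: it simply recognizes that the resulting phase-sum equals $\frac{\eta^{(T_1)}g}{\mu}\cdot\frac{1}{\mu}f(\eta^{(T_1)}\mu)$ for the function
\[
f(x)=\sum_{l=0}^{L-1}\tfrac{x}{2^l}\bigl(1-(1-\tfrac{x}{2^l})^{T_1}\bigr)\prod_{j>l}\bigl(1-\tfrac{x}{2^j}\bigr)^{T_1},
\]
and then invokes \cite[Lemma~C.3]{wu2022last} as a black box to obtain $f(x)\le 8/T_1$ uniformly in $x$, with no appeal to the parameter relation $\eta^{(T_1)}\mu T_1\gtrsim\log T$. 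Your from-scratch balancing around $j^\star=\log_2(\eta^{(T_1)}\mu T_1)$ is a legitimate substitute for this external lemma: for the second series it yields $\sum_j\eta_j^2\Phi_j\lesssim 1/(\mu^2T_1^2)$, which is in fact tighter than the target $\eta^{(T_1)}/(\mu T_1)$ whenever $\eta^{(T_1)}\mu T_1\gtrsim 1$.

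There is, however, a genuine error in your handling of the first series. The summand $\Phi_j/(\eta_j\mu T^6)$ has \emph{both} factors increasing in $j$ (since $1/\eta_j=2^j/\eta^{(T_1)}$ grows), so the sum is dominated by the last phase, not by the window near $j^\star$. Your bound therefore produces $S_1\lesssim 2^{J}/(\eta^{(T_1)}\mu T^6)$, which scales like $1/\eta^{(T_1)}$ and does not match the stated target $\log T\cdot\eta^{(T_1)}/(\mu^2 T^4)$. The fix is easy: for the constant drive $T^{-6}$ use the cruder per-phase bound $\sum_{s<T_1}(1-\eta_j\mu)^s\le T_1$ instead of $\le 1/(\eta_j\mu)$, giving $S_1\le JT_1/T^6\le 1/T^5$. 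This does not reproduce the paper's first term either, but it is dominated by the second target term (since that term is at least $\lambda k\log T\,\eta^{(T_1)}/T\gtrsim \log^2 T/T^2$), so the lemma still follows. Your final paragraph's diagnosis that ``keeping only the last phase drops the $\log T$ prefactor'' is also off: retaining only $j=J-1$ actually gives a strictly smaller bound, not a larger one; the balancing is needed only to control the \emph{early} phases where $\eta_j^2$ is large.
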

\begin{proof}
	According to the recursion provided by Eq.~\eqref{variance-square-hat-beta}, we can directly derive
	\begin{align}\label{main-variance}
		V^{(T_2)}=\sum_{t=0}^{T_2}\left[\etat\right]^3\prod_{i=t+1}^{T_2}\left(1-\eta^{(i)}\lambda k(1-\bar{\epsilon})^{\frac{k}{2}}\right)g^{(t)}.
	\end{align}
	Based on the update rule for $\eta_t$ defined in Lemma \ref{esti-V}, we have
	\begin{align}\label{eq-variance-I}
		V^{(T_2)}\leq&\sum_{t=0}^{T_2}\left[\etat\right]^3\prod_{i=t+1}^{T_2}\left(1-\eta^{(i)}\lambda k(1-\bar{\epsilon})^{\frac{k}{2}}\right)\cdot\underbrace{\left(\frac{1}{T^3}+\frac{8192(\mathsf{c}_0^2k^2\sigma^4+\lambda^4k^4+\mathsf{c}_1^2d^4)}{\lambda k(1-\bar{\epsilon})^{\frac{k}{2}}}\right)}_{g}\notag
		\\
		\leq&\eta^{(T_1)}\cdot\sum_{l=0}^{L-1}\left[\frac{\eta^{(T_1)}}{2^l}\right]^2\sum_{i=1}^{T_1}\left(1-\frac{\eta^{(T_1)}}{2^l}\lambda k(1-\bar{\epsilon})^{\frac{k}{2}}\right)^{T_1-i}\prod_{j=l+1}^{L-1}\left(1-\frac{\eta^{(T_1)}}{2^j}\lambda k(1-\bar{\epsilon})^{\frac{k}{2}}\right)^{T_1}g\notag
		\\
		\leq&\eta^{(T_1)}\cdot\left[\left[\eta^{(T_1)}\right]^2\sum_{i=1}^{T_1}\left(1-\eta^{(T_1)}\lambda k(1-\bar{\epsilon})^{\frac{k}{2}}\right)^{T_1-i}\prod_{j=1}^{L-1}\left(1-\frac{\eta^{(T_1)}}{2^j}\lambda k(1-\bar{\epsilon})^{\frac{k}{2}}\right)^{T_1}g\right.
		\notag
		\\
		&\qquad\qquad\left.+\sum_{l=1}^{L-1}\left[\frac{\eta^{(T_1)}}{2^l}\right]^2\sum_{i=1}^{T_1}\left(1-\frac{\eta^{(T_1)}}{2^l}\lambda k(1-\bar{\epsilon})^{\frac{k}{2}}\right)^{T_1-i}\prod_{j=l+1}^{L-1}\left(1-\frac{\eta^{(T_1)}}{2^j}\lambda k(1-\bar{\epsilon})^{\frac{k}{2}}\right)^{T_1}g\right]\notag
		\\
		\leq&\frac{\eta^{(T_1)} g}{\lambda k(1-\bar{\epsilon})^{\frac{k}{2}}}\cdot\left[\eta^{(T_1)}\left(1-\left(1-\eta^{(T_1)}\lambda k(1-\bar{\epsilon})^{\frac{k}{2}}\right)^{T_1}\right)\prod_{j=1}^{L-1}\left(1-\frac{\eta^{(T_1)}}{2^j}\lambda k(1-\bar{\epsilon})^{\frac{k}{2}}\right)^{T_1}\right.\notag
		\\
		&\qquad\qquad\left.+\sum_{l=1}^{L-1}\frac{\eta^{(T_1)}}{2^l}\left(1-\left(1-\frac{\eta^{(T_1)}}{2^l}\lambda k(1-\bar{\epsilon})^{\frac{k}{2}}\right)^{T_1}\right)\prod_{j=l+1}^{L-1}\left(1-\frac{\eta^{(T_1)}}{2^j}\lambda k(1-\bar{\epsilon})^{\frac{k}{2}}\right)^{T_1}\right].
	\end{align}
	Then, we define the following scalar function
	\begin{align}
		f(x):=x\left(1-(1-x)^{h+T_1}\right)\prod_{j=1}^{L-1}\left(1-\frac{x}{2^j}\right)^{T_1}+\sum_{l=1}^{L-1}\frac{x}{2^l}\left(1-\left(1-\frac{x}{2^l}\right)^{T_1}\right)\prod_{j=l+1}^{L-1}\left(1-\frac{x}{2^j}\right)^{T_1},\notag
	\end{align}
	as similar as that in [Theorem C.2, \cite{wu2022last}]. Moreover, the following inequality can be directly derived
	\begin{align}\label{aux-scalar-func}
		f\left(\eta^{(T_1)}\lambda k(1-\bar{\epsilon})^{\frac{k}{2}}\right)\leq\frac{8}{T_1},
	\end{align}
	by [Lemma C.3, \cite{wu2022last}]. Applying Eq.~\eqref{aux-scalar-func} to Eq.~\eqref{eq-variance-I} and combining Eq.~\eqref{main-variance}, we obtain
	\begin{align}
		V^{(T_2)}\leq\frac{8\eta^{(T_1)}g}{T_1\lambda^2k^2(1-\bar{\epsilon})^k}.\notag
	\end{align}
\end{proof}
\noindent \textbf{Bound of $B^{(T_2)}$: }By directly applying the recursive expression in Eq.~\eqref{bias-square-hat-beta}, Lemma \ref{esti-B} establishes an estimate for $B^{(T_2)}$.
\begin{lemma}\label{esti-B}
	Under the setting of Lemma \ref{phase-II-high-probability}, define the step size $\etat$ satisfies $\etat = \eta^{(T_1)} \cdot 2^{-\lfloor t / T_1 \rfloor}$ for $t \in [0, T_2]$. Then we obtain
	\begin{align}
		B^{(T_2)}\leq \left(1-\eta^{(T_1)}\lambda k(1-\bar{\epsilon})^{\frac{k}{2}}\right)^{T_1}B^{(0)}.
	\end{align}
\end{lemma}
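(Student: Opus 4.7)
The plan is to prove Lemma \ref{esti-B} by directly unrolling the geometric-type recursion in Eq.~\eqref{bias-square-hat-beta} and then exploiting the piecewise-constant structure of the step-size schedule. Since $B^{(t+1)} = (1 - \etat \lambda k(1-\bar{\epsilon})^{k/2}) B^{(t)}$, a straightforward induction gives the closed form
\begin{equation}
B^{(T_2)} = B^{(0)} \cdot \prod_{t=0}^{T_2-1}\bigl(1 - \etat\lambda k(1-\bar{\epsilon})^{k/2}\bigr).\notag
\end{equation}

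Next, I would partition the index set $\{0,1,\dots,T_2-1\}$ into the $L = \lceil \log_2(T_2/T_1)\rceil$ consecutive blocks of length $T_1$ on which the geometric schedule keeps $\etat$ constant, namely $\etat = \eta^{(T_1)}/2^l$ on the $l$-th block ($l = 0,1,\dots,L-1$). On the $0$-th block, the factor is exactly $(1 - \eta^{(T_1)}\lambda k(1-\bar{\epsilon})^{k/2})^{T_1}$, which is the target bound. On every subsequent block ($l \geq 1$), the per-step factor satisfies $1 - (\eta^{(T_1)}/2^l)\lambda k(1-\bar{\epsilon})^{k/2} \in [0,1]$, so the contribution of these blocks is bounded by $1$. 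Combining these two observations immediately yields the claimed inequality.

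The only subtlety worth verifying is that the per-step factors remain nonnegative, so that the product is truly monotone and the bound ``$\leq 1$'' on the tail blocks is legitimate. This requires $\eta^{(T_1)}\lambda k(1-\bar{\epsilon})^{k/2} \leq 1$, which is guaranteed by the step-size assumption from Lemma \ref{phase-II-high-probability} (the upper bound on $\eta_0$ there is much smaller than $1/(\lambda k)$, given the scale of $\cst_4$ and $d$). With that caveat, the proof reduces to a one-line application of the unrolled recursion plus the block-wise factorization, so there is no genuine obstacle — the work is essentially bookkeeping around the step-size schedule.
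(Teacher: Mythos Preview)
Your proposal is correct and mirrors the paper's own argument: unroll the recursion \eqref{bias-square-hat-beta} into the block-product $\prod_{l=0}^{L-1}\bigl(1-\frac{\eta^{(T_1)}}{2^l}\lambda k(1-\bar{\epsilon})^{k/2}\bigr)^{T_1}B^{(0)}$, keep the $l=0$ block, and bound the remaining factors by $1$. Your explicit check that $\eta^{(T_1)}\lambda k(1-\bar{\epsilon})^{k/2}\le 1$ (so the factors lie in $[0,1]$) is a nice addition that the paper leaves implicit.
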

\begin{proof}
	According to the recursion provided by Eq.~\eqref{bias-square-hat-beta}, we can directly derive
	\begin{align}
		B^{(T_2)}=&\prod_{l=0}^{L-1}\left(1-\frac{\eta^{(T_1)}}{2^l}\lambda k(1-\bar{\epsilon})^{\frac{k}{2}}\right)^{T_1}B^{(0)}\notag
		\\
		\leq&\left(1-\eta^{(T_1)}\lambda k(1-\bar{\epsilon})^{\frac{k}{2}}\right)^{T_1}B^{(0)}.\notag
	\end{align}
\end{proof}
\begin{proof}[Proof of Theorem \ref{converge-phase-II}]
	The proof is completed by applying the conclusions of Lemma \ref{esti-V} and Lemma \ref{esti-B} to Eq. \eqref{main-error}.
\end{proof}
According to the result of Theorem \ref{converge-phase-II}, we have
\begin{align}\label{final-error-bound}
	\left(1-\beta^{(T)}\right)^2\lesssim \left(1-\frac{\eta_0\lambda k(1-\bar{\epsilon})^{\frac{k}{2}}}{2}\right)^{T_1}\frac{\bar{\epsilon}^2}{\delta}+\frac{\lceil\log(T)\rceil(\mathsf{c}_0^2k^2\sigma^4+\lambda^4k^4+\mathsf{c}_1^2d^4)\eta_{0}}{\lambda^3k^3(1-\bar{\epsilon})^{\frac{3k}{2}}\delta T},
\end{align}
with probability at least $1-\delta/2$. The proof is completed by combining the error bound Eq.~\eqref{final-error-bound} with the fact that the event $\left\{\alpha^{(T)}=\beta^{(T_2)}\right\}$ occurs with probability at least $1-\delta/2$, i.e.,
\begin{align*}
\bbP\left(\left\{\alpha^{(T)}=\beta^{(T_2)}\right\}\right)\geq\bbP\left(\bigcap_{t=1}^{T_2}\left\{\alpha^{(T_1+t)}=\beta^{(t)}\right\}\right)\overset{\text{(a)}}{\geq}1-\frac{\delta}{2},
\end{align*}
where (a) is derived from Lemma \ref{phase-II-convergence-thm} and the construction methodology of matrix sequence $\left\{O^{(t)}\right\}_{t=1}^{T_2}$.
\end{proof}

\subsection{Proofs of Complementary Result}
\begin{proof}[Proofs of Corollary \ref{corollary-estimation-second-moment}]
	Let random variable $X=\langle u, \expan(\upE)\rangle$ for given unit vector $u\in\bbR^{d^k}$. According to Eq.~\eqref{eq-subGaussian-prob}, we have $\bbP(|X|\geq r)\leq 2e^{-\frac{r^2}{2\sigma^2}}$ for any $r>0$. Therefore, we obtain
	\begin{align}
		\bbE\left[\llangle u,\expan(\upE)\expan(\upE)^{\top}u\rrangle\right]=\bbE\left[X^2\right]=2\int_{0}^{\infty}r\bbP(|X|>r)\text{d}r\notag\leq4\int_{0}^{\infty}re^{-\frac{r^2}{2\sigma^2}}\text{d}r=4\sigma^2.\notag
	\end{align}
\end{proof}

\begin{proof}[Proof of Corollary \ref{corollary-no-Ass2-evenk}]
	Under the absence of Assumption \ref{assumption-estimation-second-moment}, the estimation of the term 
	$\bbE_t\left[\left[\olinesg^{(t)}\right]^2\right]$ in Eq.~\eqref{esti-hatbeta} is replaced by
	$$
	\bbE_t\left[\left[\olinesg^{(t)}\right]^2\right]\lesssim\sigma^2k^2,
	$$
	thereby leading to the following reformulation of the equation:
	\begin{align}
		\bbE_t\left[\left[\widehat{\beta}^{(t+1)}\right]^2\right]\leq\left(1-\etat\lambda k(1-\bar{\epsilon})^{\frac{k}{2}}\right)\left[\widehat{\beta}^{(t)}\right]^2+\frac{1}{N^6}+\sigma^2k^2\left[\eta^{(t)}\right]^2+\frac{\left[\eta^{(t)}\right]^3}{\lambda k(1-\bar{\epsilon})^{\frac{k}{2}}}\cdot\left(\lambda^4k^4+\mathsf{c}_1^2d^4\right).
	\end{align}
	Therefore, $V^{(T_2)}$ in Eq.~\eqref{main-variance} acquires additional terms as follows:
	$$
	V_{\mathrm{add}}^{(T_2)}:=\sigma^2k^2\sum_{t=0}^{T_2}\left[\etat\right]^2\prod_{i=t+1}^{T_2}\left(1-\eta^{(i)}\lambda k(1-\bar{\epsilon})^{\frac{k}{2}}\right).
	$$
	By employing proof techniques analogous to those used in Lemma \ref{esti-V}, we obtain:
	$$
	V^{(T_2)}\lesssim\frac{\sigma^2\lceil\log(N)\rceil}{\lambda^2(1-\bar{\epsilon})^{k}N}+\frac{\lceil\log(N)\rceil\eta^{(T_1)}}{\lambda^2k^2(1-\bar{\epsilon})^{k}N^4}+\frac{\lceil\log(N)\rceil(\lambda^4k^4+\mathsf{c}_1^2d^4)\eta^{(T_1)}}{\lambda^3k^3(1-\bar{\epsilon})^{\frac{3k}{2}}N}.
	$$
    Applying the above estimate to Theorem \ref{converge-phase-II} completes the proof.
\end{proof}
\begin{proof}[Proof of Corollary \ref{prob-odd}]
Notice that $u$ is sampled from a uniform distribution on the unit sphere in $\bbR^d$ and $v_*$ also lies on this sphere. According to the rotational invariance, without loss of generality, we can assume that $v_*=(1,0,\cdots,0)$. Thus, it suffices to analyze the magnitude of $|u_1|$.

Note that the random unit vector $u$ can be generated as $u=z/\|z\|$, where $z=(z_1,\dots,z_d)\sim\mathcal{N}(\bf{0},\upI_d)$. Consequently, $u_1=z_1/\|z\|$, and $\|z\|^2\sim \chi^2(d)$. For any $\tau>0$, the probability $\bbP\left(|u_1|\geq\tau d^{-1/2}\right)$ is equivalent to $\bbP\left(z_1^2/(z_1^2+s)>\tau^2d^{-1}\right)$, where 
$s=\sum_{i=2}^d z_i^2\sim\chi^2(d-1)$ and is independent of $z_1$.

For simplicity, let $x=z_1^2\sim\chi^2(1)$ and $y=s$. Then
\begin{align}
P\left(|u_1|\geq\tau d^{-1/2}\right)=P\left(\frac{x}{x+y}\geq\tau^2d^{-1}\right)=P\left(\frac{x}{y}\geq\frac{\tau^2d^{-1}}{1-\tau^2d^{-1}}\right)\overset{\text{(a)}}{=}P\left(F_{1,d-1}\geq\frac{\tau^2(d-1)}{d-\tau^2}\right),\notag
\end{align}
where (a) follows from the fact that for any $c>0$, $P\left(x/y\geq c\right)= P\left(F_{1,d-1}\geq c(d-1)\right)$ where $F_{1,d-1}$ denotes the $F$-distribution with $(1,d-1)$ degrees of freedom. 

Furthermore, for any $\delta>0$, the condition $P(F_{1,d-1}<c)\leq\delta$ is satisfied if 
$$
c\leq \left[t_{d-1,(1+\delta)/2} \right]^2,
$$
where $t_{d-1,(1+\delta)/2}$ is the $(1+\delta)/2$-quantile of the $t$-distribution with $d-1$ degrees of freedom. Therefore, in order to achieve the result of Corollary \ref{prob-odd}, we require
$$
\frac{\tau^2(d-1)}{d-\tau^2}\leq\left[t_{d-1,(1+\delta)/2}\right]^2.
$$
\end{proof}

\subsection{Proofs of Technical Lemmas}
\begin{proof}[Proof of Lemma \ref{lemma:high-prob}]
	For any $t\in [T]$, $i, j \in [d]$ , let $V := V^{(i,j)}$ be the matrix such that $V_{i',j'} = \indicator\{i'=i, j'=j\}$, then it follows from the linear in $\mathbf{E}^{(t)}$ expression of $E^{(t)}$ that
	\begin{align}
		\label{eq:eij}
		E_{i,j}^{(t)} = \langle E^{(t)}, V \rangle &= \frac{1}{2\|\Wt\|_F^{\halfk-2}} \sum_{l=1}^{\halfk} Z_l(V) - \frac{(k-4)}{2\|\Wt\|_F^{\halfk}} Z_0(V) W^{(t)}_{i,j}, 
	\end{align} where
	\begin{align*}
		Z_0(V) = \left\langle (\Wt)^{\otimes\halfk}, \mathbf{E}^{(t)}\right\rangle 
		\qquad Z_l(V) = \left\langle (\Wt)^{\otimes (l-1)} \otimes V \otimes (\Wt)^{\otimes \halfk-l}, \mathbf{E}^{(t)} \right\rangle \qquad
	\end{align*}
	Observe that conditional on $\mathcal{F}_{t-1}$, $Z_0(V)$ and $Z_l(V)$ are sum of $d^{k}$ i.i.d. random variables by Assumption \ref{ass-2}. It then follows from standard sub-Gaussian tail that for any $u>0$, the event $\mathcal{C}_{t,i,j,l}(u)$ defined as
	\begin{align*}
		\mathcal{C}_{t,i,j,l}(u) := \left\{|Z_l(V)| > 2\sigma \cdot \|\Wt\|^{\halfk - \indicator\{l>0\}}_F \cdot \sqrt{u}\right\}
	\end{align*} satisfies $\mathbb{P}\left[\mathcal{C}_{i,j,l}(u) \big| \mathcal{F}_{t-1}\right] \le e^{-u}$. Combining this with the tower rule of the conditional expectation further yields,
	\begin{align*}
		\mathbb{P}\left[\mathcal{C}_{t,i,j,l}(u)\right] = \mathbb{E} \left[\indicator\{\mathcal{C}_{t,i,j,l}(u)\}\right] = \mathbb{E} \left[\mathbb{E}\left[\indicator\{\mathcal{C}_{t,i,j,l}(u)\} |\mathcal{F}_{t-1} \right]\right] \le e^{-u}. 
	\end{align*}
	The rest of the proofs conditioned on the event $\mathcal{B}(u):=\left\{\cup_{t\in [T], i\in [d], j\in [d], l\in \{0\}\cup [\halfk]} \mathcal{C}_{t,i,j,l}(u)\right\}^{c}$, which satisfies
	\begin{align*}
		\mathbb{P}\left[\{\mathcal{B}(u)\}^c\right] \le Td^2k \cdot e^{-u}
	\end{align*} by union bound. Plugging the upper bound for $Z_l(V)$ in $\mathcal{B}(u)$ into the identity \eqref{eq:eij} gives
	\begin{align*}
		\left|E_{i,j}^{(t)} \right| &\le \frac{1}{2\|\Wt\|_F^{\halfk-2}} \sum_{l=1}^{\halfk} |Z_l(V)| + \frac{(k-4)}{2\|\Wt\|_F^{\halfk}} |Z_0(V)| |W^{(t)}_{i,j}| \\
		&\le 2 \sigma \left\{\frac{k}{4} \frac{\|\Wt\|_F^{\halfk-1}}{\|\Wt\|_F^{\halfk-2}} + \frac{k-4}{2} \frac{\|\Wt\|_F^{\halfk}}{\|\Wt\|_F^{\halfk}} |W_{i,j}^{(t)}| \right\} \sqrt{\log (1/u)}\\
		&\le \sigma k \left\{\|\Wt\|_F + |W_{i,j}^{(t)}|\right\} \sqrt{u}.
	\end{align*} It then concludes the proof by choosing $u=\log(Td^2k/\delta)$. Similarly, for any fixed matrix $Q\in\bbR^{d\times d}$, the following hold
	\begin{align*}
		\left|\llangle E^{(t)},Q\rrangle\right|&\le \frac{1}{2\|\Wt\|_F^{\halfk-2}} \sum_{l=1}^{\halfk} |Z_l(Q)| + \frac{(k-4)}{2\|\Wt\|_F^{\halfk}} |Z_0(Q)| \left|\llangle W^{(t)},Q\rrangle\right| \\
		&\le 2 \sigma \left\{\frac{k}{4} \frac{\|\Wt\|_F^{\halfk-1}\|Q\|_F}{\|\Wt\|_F^{\halfk-2}} + \frac{k-4}{2} \frac{\|\Wt\|_F^{\halfk}}{\|\Wt\|_F^{\halfk}} \left|\left\langle W^{(t)},Q\right\rangle\right| \right\} \sqrt{u}
	\end{align*}
\end{proof}

\begin{proof}[Proof of Lemma \ref{control-expectation}]
	For any $t\in[1:T]$, recall that event $\mathcal{A}_{1}^{(t)}(\delta)$ has the form of
	\begin{align*}
		\mathcal{A}_{1}^{(t)}(\delta) = &\left\{ \forall i,j\in [d], ~~\left| E^{(t)}_{i,j} \right| \le \sqrt{2\mathsf{c}_1} \left\{\|W^{(t)}\|_F + |W_{i,j}^{(t)}| \right\} \right\},
	\end{align*}
	where $\cst_1 = \sigma k\log^{\frac{1}{2}} (kTd^2/\delta)$. Moreover, event $\calA_{2}^{(t)}(\delta)$ can be decomposed into $\calA_{2}^{(t)}(\delta)=\calA_{2,1}^{(t)}(\delta)\cup\calA_{2,2}^{(t)}(\delta)$, where 
	\begin{align}
		&\calA_{2,1}^{(t)}(\delta):=\left\{\left|\llangle E^{(t)},v_*v_*^{\top}\rrangle\right|\leq\sqrt{\mathsf{c}_1}\left(\|W^{(t)}\|_F+\left|\llangle v_*v_*^{\top},W^{(t)}\rrangle\right|\right)\right\},\notag
		\\
		&\calA_{2,2}^{(t)}(\delta):=\left\{\left|\llangle E^{(t)},W^{(t)}\rrangle\right|\leq\sqrt{2\mathsf{c}_1}\|W^{(t)}\|_F^2\right\}.\notag
	\end{align}
	According to the convexity of $e^x$ and Assumption \ref{ass-2}, one can notice that $\frac{1}{\left\|W^{(t)}\right\|_{\tF}\left\|Q\right\|_{\tF}}\llangle E^{(t)},Q\rrangle$ (defined in Eq.~\eqref{eq:errort}) is sub-Gaussian with parameter $2k\sigma\left(1+\frac{\left|\llangle W^{(t)},Q\rrangle\right|}{\left\|W^{(t)}\right\|_{\tF}\left\|Q\right\|_{\tF}}\right)$ for any fixed matrix $Q\in\bbR^{d\times d}$. Therefore, we have 
	\begin{align}\label{tech-1}
		&\left|\bbE_t\left[\frac{1}{\left\|W^{(t)}\right\|_{\tF}}\llangle E^{(t+1)}\cdot\mathds{1}_{\calA_{1}^{(t+1)}(\delta)\cap\calA_{2}^{(t+1)}(\delta)},v_*v_*^{\top}\rrangle\right]\right|\notag
		\\
		=&\left|\bbE_t\left[\frac{1}{\left\|W^{(t)}\right\|_{\tF}}\llangle E^{(t+1)}\cdot\mathds{1}_{\calA_{1}^{(t+1)}(\delta)\cap\calA_{2}^{(t+1)}(\delta)},v_*v_*^{\top}\rrangle\right]-\bbE_t\left[\frac{1}{\left\|W^{(t)}\right\|_{\tF}}\llangle E^{(t+1)},v_*v_*^{\top}\rrangle\right]\right|\notag
		\\
		\leq&\underbrace{\left|\bbE_t\left[\frac{1}{\left\|W^{(t)}\right\|_{\tF}}\llangle E^{(t+1)}\cdot\mathds{1}_{\calA_{2,1}^{(t+1)}(\delta)},v_*v_*^{\top}\rrangle\right]-\bbE_t\left[\frac{1}{\left\|W^{(t)}\right\|_{\tF}}\llangle E^{(t+1)},v_*v_*^{\top}\rrangle\right]\right|}_{\calI_{2,t}}\notag
		\\
		&+\underbrace{\left|\bbE_t\left[\frac{1}{\left\|W^{(t)}\right\|_{\tF}}\llangle E^{(t+1)}\cdot\mathds{1}_{\left(\calA_{1}^{(t+1)}(\delta)\right)^{c}\cap\calA_{2,1}^{(t+1)}(\delta)},v_*v_*^{\top}\rrangle\right]\right|}_{\calII_{2,t}}\notag
		\\
		&+\underbrace{\left|\bbE_t\left[\frac{1}{\left\|W^{(t)}\right\|_{\tF}}\llangle E^{(t+1)}\cdot\mathds{1}_{\left(\calA_{2,2}^{(t+1)}(\delta)\right)^c\cap\calA_{1}^{(t+1)}(\delta)\cap\calA_{2,1}^{(t+1)}(\delta)},v_*v_*^{\top}\rrangle\right]\right|}_{\calIII_{2,t}}.
	\end{align}
	Based on Lemma \ref{aux-truncation-subGaussian-covariance}, $\mathsf{c}_1\gtrsim k\sigma\log^{\frac{1}{2}}(kd^2T/\delta)$ yields $\calI_{2,t}\leq\frac{\sqrt{\delta}}{3}$ when $T$ is sufficiently large. Utilizing Cauchy-Schwartz inequality and Lemma \ref{aux-truncation-subGaussian-second-moment}, we obtain
	\begin{align}\label{tech-2}
		\calII_{2,t}\leq\frac{1}{\left\|W^{(t)}\right\|_{\tF}}\left(\bbE_t\left[\left\|E^{(t+1)}\right\|_{\tF}^2\cdot\mathds{1}_{\calA_{1}^{(t+1)}(\delta)}\right]\right)^{\frac{1}{2}}\lesssim dk\sigma\delta^{\frac{1}{4}}.
	\end{align}
	Finally, $\calIII_{2,t}$ satisfies
	\begin{align}\label{tech-3}
		\calIII_{2,t}\leq&\sup_{E^{(t+1)}}\frac{1}{\left\|W^{(t)}\right\|_{\tF}}\left|\llangle E^{(t+1)}\mathds{1}_{\calA_{1}^{(t+1)}(\delta)\cap\calA_{2,1}^{(t+1)}(\delta)},v_*v_*^{\top}\rrangle\right|\cdot\left[1-\bbP\left(\calA_{2,2}^{(t+1)}(\delta)\right)\right]\notag
		\\
		\overset{\text{(a)}}{\lesssim}&\sqrt{\mathsf{c}_1}d\cdot\left[1-\bbP\left(\calA_{2,2}^{(t+1)}(\delta)\right)\right]\overset{\text{(b)}}{\lesssim}\frac{\sqrt{\mathsf{c}_1}d}{\text{poly}\left(\frac{kd^2T}{\delta}\right)},
	\end{align}
	where (a) is derived from Cauchy-Schwartz inequality and (b) follows from Proposition \ref{prop-A5}. Therefore, if $\delta\lesssim\frac{\tau^4}{\sigma^4k^4d^4}$, combining Eqs.~\eqref{tech-1}-\eqref{tech-3} can directly derive Eq.~\eqref{tech-lemma-2-1}. By employing a similar proof method, Eq.~\eqref{tech-lemma-2-2} can be further derived.
\end{proof}

\subsection{Auxiliary Lemma}
\begin{definition}[Sub-Gaussian Random Variable]\label{sub-Gaussian}
	A random variable $X$ with mean $\bbE X$ is sub-Gaussian if there is $\sigma\in\bbR_+$ such that
	\begin{align}
		\bbE\left[e^{\lambda(X-\bbE X)}\right]\leq e^{\frac{\lambda^2\sigma^2}{2}},\quad \forall\lambda\in\bbR.\notag
	\end{align}
\end{definition}

\begin{proposition}\label{prop-A5}[\citep{wainwright2019high}]
	For a random variable $X$ which satisfies the sub-Gaussian condition \ref{sub-Gaussian} with parameter $\sigma$, we have
	\begin{align}\label{eq-subGaussian-prob}
		\bbP\left(|X-\bbE X|>c\right)\leq 2e^{-\frac{c^2}{2\sigma^2}},\quad \forall c>0.
	\end{align}
\end{proposition}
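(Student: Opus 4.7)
The plan is to derive the tail bound via the standard Chernoff argument: upper-bound the one-sided tail probability by Markov's inequality applied to the exponential moment generating function, use the sub-Gaussian MGF estimate supplied by Definition \ref{sub-Gaussian}, optimize over the free parameter, and then symmetrize.

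Concretely, first I would fix $c>0$ and $\lambda>0$ and write
\begin{align*}
    \bbP(X-\bbE X > c) = \bbP\left(e^{\lambda(X-\bbE X)} > e^{\lambda c}\right) \le e^{-\lambda c}\,\bbE\left[e^{\lambda(X-\bbE X)}\right] \le e^{-\lambda c + \lambda^2 \sigma^2 / 2},
\end{align*}
where the first inequality is Markov's inequality and the second is Definition \ref{sub-Gaussian}. The exponent $-\lambda c + \lambda^2 \sigma^2/2$ is a quadratic in $\lambda$ minimized at $\lambda^\star = c/\sigma^2 > 0$, and substituting $\lambda^\star$ yields the one-sided tail bound $\bbP(X-\bbE X > c) \le e^{-c^2/(2\sigma^2)}$.

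Next, I would handle the lower tail by applying exactly the same argument to the random variable $-X$, which is also sub-Gaussian with the same parameter $\sigma$ since Definition \ref{sub-Gaussian} requires the bound for every $\lambda \in \bbR$ (so one may simply plug in $\lambda < 0$). This gives $\bbP(X - \bbE X < -c) \le e^{-c^2/(2\sigma^2)}$. A union bound over the two disjoint events $\{X - \bbE X > c\}$ and $\{X - \bbE X < -c\}$ then produces the claimed factor of $2$:
\begin{align*}
    \bbP(|X - \bbE X| > c) \le 2\exp\left\{-\frac{c^2}{2\sigma^2}\right\}.
\end{align*}

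This is a classical Chernoff/Cram\'er bound and there is no genuine obstacle; the only thing worth being careful about is that the sub-Gaussian MGF hypothesis must hold for every real $\lambda$ (not just $\lambda \ge 0$) in order to obtain the lower tail by the symmetric substitution, which is exactly how Definition \ref{sub-Gaussian} is stated in the excerpt. Hence the proof reduces to the optimization step $\lambda^\star = c/\sigma^2$ together with a two-line union bound.
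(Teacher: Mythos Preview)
Your proof is correct and is exactly the standard Chernoff argument that the cited reference \citep{wainwright2019high} uses; the paper itself does not supply a separate proof but simply cites the textbook, so your proposal is precisely what is needed.
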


\begin{lemma}\label{aux-truncation-subGaussian-second-moment}
	Consider a random variable $X$ which is zero-mean and sub-Gaussian with parameter $\sigma$ for some $\sigma>0$. Then, for any $\tau\in(0,1)$, there exists $R>0$ which depends on $\sigma$ and $\tau$ such that
	\begin{align}
		\bbE\left[X^2\mathds{1}_{|X|\leq R}\right]\geq(1-\tau)\bbE\left[X^2\right],\quad \bbE\left[X^4\mathds{1}_{|X|>R}\right]\leq\tau.\notag
	\end{align}
\end{lemma}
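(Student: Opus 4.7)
The plan is to reduce both claims to standard sub-Gaussian tail estimates. Note that the first claim is equivalent to $\bbE[X^2 \mathds{1}_{|X|>R}] \leq \tau \bbE[X^2]$, so the whole lemma boils down to showing that $\bbE[X^2 \mathds{1}_{|X|>R}]$ and $\bbE[X^4 \mathds{1}_{|X|>R}]$ can both be made arbitrarily small by taking $R$ large enough. The key input is the sub-Gaussian tail bound from Proposition \ref{prop-A5}, namely $\bbP(|X|>t) \leq 2 e^{-t^2/(2\sigma^2)}$, combined with the layer-cake identity
$$
\bbE[|X|^p \mathds{1}_{|X|>R}] = R^p \bbP(|X|>R) + p \int_R^\infty t^{p-1} \bbP(|X|>t)\,dt.
$$

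Substituting the sub-Gaussian tail and computing the resulting Gaussian-type integrals via the substitution $u = t^2/(2\sigma^2)$ yields closed-form estimates of the shape $\bbE[X^2 \mathds{1}_{|X|>R}] \leq (2R^2 + 4\sigma^2) e^{-R^2/(2\sigma^2)}$ and $\bbE[X^4 \mathds{1}_{|X|>R}] \leq (2R^4 + 8R^2\sigma^2 + 16\sigma^4) e^{-R^2/(2\sigma^2)}$, both of which tend to zero as $R \to \infty$. I first handle the degenerate case $\bbE[X^2] = 0$: then $X \equiv 0$ almost surely and both claims hold trivially for any $R>0$. Otherwise I pick $R_1$ large enough that the first right-hand side is at most $\tau \bbE[X^2]$ and $R_2$ large enough that the second right-hand side is at most $\tau$, and set $R := \max(R_1, R_2)$.

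There is no substantial obstacle; the argument is a textbook tail-truncation calculation. The main step is simply verifying the two Gaussian-type integrals --- e.g.\ for the fourth-moment bound, the substitution $u = t^2/(2\sigma^2)$ reduces $\int_R^\infty 4 t^3 \cdot 2 e^{-t^2/(2\sigma^2)}\,dt$ to $16\sigma^4 \int_{R^2/(2\sigma^2)}^\infty u e^{-u}\,du$, which evaluates to $(8R^2\sigma^2 + 16\sigma^4) e^{-R^2/(2\sigma^2)}$, and the second-moment case is even simpler. The only mild subtlety is that the first claim is stated relatively to $\bbE[X^2]$, so the chosen $R$ must be allowed to depend on the distribution of $X$ through $\bbE[X^2]$ in addition to $\sigma$ and $\tau$.
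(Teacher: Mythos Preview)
Your proposal is correct and follows essentially the same approach as the paper's proof: both use the sub-Gaussian tail bound from Proposition~\ref{prop-A5} together with the layer-cake identity $\bbE[|X|^p\mathds{1}_{|X|>R}] = R^p\bbP(|X|>R) + p\int_R^\infty t^{p-1}\bbP(|X|>t)\,dt$, evaluate the resulting Gaussian-type integrals, and then choose $R$ large enough. Your explicit bounds $(2R^2+4\sigma^2)e^{-R^2/(2\sigma^2)}$ and $(2R^4+8R^2\sigma^2+16\sigma^4)e^{-R^2/(2\sigma^2)}$ match (indeed slightly sharpen) the paper's, and you correctly flag the subtlety that the chosen $R$ must depend on $\bbE[X^2]$ in addition to $\sigma$ and $\tau$ --- the paper handles this the same way by inserting $\min\{\bbE[X^2],1\}$ into its formula for $R$.
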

\begin{proof}
	According to Eq.~\eqref{eq-subGaussian-prob}, we have $\bbP(|X|\geq r)\leq 2e^{-\frac{r^2}{2\sigma^2}}$ for any $r>0$. Therefore, we obtain
	\begin{align}\label{estimate-second-moment}
		\bbE\left[X^2\mathds{1}_{|X|>R}\right]\overset{\text{(a)}}{=}&2\int_{0}^{\infty}r\bbP(|X|\mathds{1}_{|X|>R}>r)\text{d}r\notag
		\\
		=&2\int_R^{\infty}r\bbP(|X|>r)\text{d}r+R^2\bbP(|X|>R)\notag
		\\
		\leq&4\int_R^{\infty}re^{-\frac{r^2}{2\sigma^2}}\text{d}r+2R^2e^{-\frac{R^2}{2\sigma^2}}=(4\sigma^2+2R^2)e^{-\frac{R^2}{2\sigma^2}},
	\end{align}
	where (a) is derived from [Lemma 2.2.13, \citep{wainwright2019high}]. Moreover, we have
	\begin{align}\label{original-estimate-forth-moment}
		\bbE\left[X^4\mathds{1}_{|X|>R}\right]=&4\int_{0}^{\infty}r^3\bbP\left(|X|\mathds{1}_{|X|>R}>r\right)\text{d}r\notag
		\\
		=&4\int_{R}^{\infty}r^3\bbP\left(|X|>r\right)\text{d}r+R^4\bbP\left(|X|>R\right)\notag
		\\
		\leq&8\int_{R}^{\infty}r^3e^{-\frac{r^2}{2\sigma ^2}}\text{d}r+2R^4e^{-\frac{R^2}{2\sigma ^2}}\leq(4\sigma^2+2R^2)^2e^{-\frac{R^2}{2\sigma ^2}},
	\end{align}
	Therefore, we only need to set $R$ as:
	\begin{align}
	R= 2\sqrt{2}\sigma\log^{1/2}\left(\frac{4\sigma^2+2K}{\tau\min\{\bbE\left[X^2\right],1\}}\right),\notag
	\end{align}
	where $K$ satisfies $K\geq 8\sigma^2\log\left((4\sigma^2+2K)/(\tau\min\{\bbE[X^2],1\})\right)$.
\end{proof}
\begin{lemma}\label{aux-truncation-subGaussian-covariance}
	Suppose zero-mean random variables $X$ and $Y$ are sub-Gaussian with parameters $\sigma_1$ and $\sigma_2$, respectively.
		Then, for any $\tau\in(0,1)$, there exists $R_1,R_2\geq0$ such that
		$$
		\left|\bbE\left[XY\mathds{1}_{\{|X|\leq R_1\}\bigcap\{|Y|\leq R_2\}}\right]-\bbE[XY]\right|\leq\tau.
		$$ 
\end{lemma}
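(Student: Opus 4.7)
The plan is to reduce the truncation error to one-sided tail quantities and then invoke Lemma \ref{aux-truncation-subGaussian-second-moment} to control each of them. Let $A := \{|X| \leq R_1\} \cap \{|Y| \leq R_2\}$, so that
\begin{align*}
    \bbE[XY] - \bbE\bigl[XY\mathds{1}_{A}\bigr] = \bbE\bigl[XY\mathds{1}_{A^{c}}\bigr],
\end{align*}
with $A^{c} = \{|X| > R_1\} \cup \{|Y| > R_2\}$. Since $\mathds{1}_{A^{c}} \leq \mathds{1}_{\{|X| > R_1\}} + \mathds{1}_{\{|Y| > R_2\}}$, a triangle inequality gives
\begin{align*}
    \left|\bbE\bigl[XY\mathds{1}_{A^{c}}\bigr]\right| \leq \bbE\bigl[|XY|\,\mathds{1}_{\{|X| > R_1\}}\bigr] + \bbE\bigl[|XY|\,\mathds{1}_{\{|Y| > R_2\}}\bigr].
\end{align*}

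Next, I would apply Cauchy--Schwarz to each term to separate $X$ and $Y$:
\begin{align*}
    \bbE\bigl[|XY|\,\mathds{1}_{\{|X| > R_1\}}\bigr] &\leq \sqrt{\bbE[X^{2}\mathds{1}_{\{|X| > R_1\}}] \cdot \bbE[Y^{2}]},\\
    \bbE\bigl[|XY|\,\mathds{1}_{\{|Y| > R_2\}}\bigr] &\leq \sqrt{\bbE[X^{2}] \cdot \bbE[Y^{2}\mathds{1}_{\{|Y| > R_2\}}]}.
\end{align*}
The second factor in each product is harmless: Corollary \ref{corollary-estimation-second-moment}-style sub-Gaussian tail integration (identical to the first few lines of the proof of Lemma \ref{aux-truncation-subGaussian-second-moment}) gives $\bbE[X^{2}] \leq 4\sigma_1^{2}$ and $\bbE[Y^{2}] \leq 4\sigma_2^{2}$. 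The first factor in each is exactly the truncated second moment controlled by Lemma \ref{aux-truncation-subGaussian-second-moment}: for any $\tau' \in (0,1)$ one can pick $R_1 = R_1(\sigma_1, \tau')$ and $R_2 = R_2(\sigma_2, \tau')$ so that $\bbE[X^{2}\mathds{1}_{\{|X| > R_1\}}] \leq \tau'$ and $\bbE[Y^{2}\mathds{1}_{\{|Y| > R_2\}}] \leq \tau'$.

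Combining these estimates yields
\begin{align*}
    \left|\bbE[XY] - \bbE[XY\mathds{1}_{A}]\right| \leq 2\sqrt{\tau'} \cdot \max\{\sigma_1, \sigma_2\} \cdot 2,
\end{align*}
so choosing $\tau' = \tau^{2}/(16 \max\{\sigma_1^{2}, \sigma_2^{2}\})$ closes the argument and delivers $R_1, R_2$ depending only on $\sigma_1, \sigma_2, \tau$. The only subtle point, and what I expect to be the main (though minor) obstacle, is making the sub-Gaussian tail integration uniform enough in $R$ so that a single choice of $R_1, R_2$ works; this is handled exactly as in Eqs.~\eqref{estimate-second-moment}--\eqref{original-estimate-forth-moment} of Lemma \ref{aux-truncation-subGaussian-second-moment} by solving the implicit relation $R \geq c\sigma \log^{1/2}(\sigma^{2}/\tau)$, which has a well-defined solution for every $\tau \in (0,1)$. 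No independence between $X$ and $Y$ is used, which matches the intended application of this lemma in the bound on $\mathcal{I}_{2,t}$ in the proof of Lemma \ref{control-expectation}.
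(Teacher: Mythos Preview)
Your proposal is correct and follows essentially the same approach as the paper: decompose the truncation error into tail pieces, apply Cauchy--Schwarz, and control each factor via the sub-Gaussian tail integration in Eq.~\eqref{estimate-second-moment}. The only cosmetic difference is that the paper expands $(1-\mathds{1}_{|X|>R_1})(1-\mathds{1}_{|Y|>R_2})$ via inclusion--exclusion into three terms (including the cross term $\bbE[XY\mathds{1}_{|X|>R_1}\mathds{1}_{|Y|>R_2}]$), whereas your union bound $\mathds{1}_{A^c}\le \mathds{1}_{|X|>R_1}+\mathds{1}_{|Y|>R_2}$ yields only two; your route is slightly cleaner but both land on the same explicit choice of $R_1,R_2$.
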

\begin{proof}
	According to $\bbE\left[XY\mathds{1}_{\{|X|\leq R_1\}\bigcap\{|Y|\leq R_2\}}\right]=\bbE\left[X(1-\mathds{1}_{|X|>R_1})Y(1-\mathds{1}_{|Y|>R_2})\right]$, we have
	\begin{align}
		\bbE\left[XY\mathds{1}_{\{|X|\leq R_1\}\bigcap\{|Y|\leq R_2\}}\right]-\bbE[XY]=&-\bbE\left[XY\mathds{1}_{|X|>R_1}\right]-\bbE\left[XY\mathds{1}_{|Y|>R_2}\right]\notag
		\\
		&+\bbE\left[XY\mathds{1}_{|X|>R_1}\mathds{1}_{|Y|>R_2}\right].\notag
	\end{align}
	For $\bbE\left[XY\mathds{1}_{|X|>R_1}\right]$, we can obtain
	\begin{align}\label{eq:EXY-X}
		\left|\bbE\left[XY\mathds{1}_{|X|>R_1}\right]\right|\overset{\text{(a)}}{\leq}&\left(\bbE\left[X^2\mathds{1}_{|X|>R_1}\right]\right)^{1/2}\left(\bbE\left[Y^2\right]\right)^{1/2}\notag
		\\
		\overset{\text{(b)}}{\leq}&2(\sigma_1^2+R_1^2)^{1/2}e^{-\frac{R_1^2}{4\sigma_1^2}}\left(\bbE\left[Y^2\right]\right)^{1/2}
		\overset{\text{(c)}}{\leq}4\sigma_2(\sigma_1^2+R_1^2)^{1/2}e^{-\frac{R_1^2}{4\sigma_1^2}},
	\end{align}
	where (a) follows from the Cauchy-Schwarz inequality; (b) is derived from the inequality Eq.~\eqref{estimate-second-moment} in the proof of Lemma \ref{aux-truncation-subGaussian-second-moment}; (c) is established through the repeated application of the proof of Lemma \ref{aux-truncation-subGaussian-second-moment}. Similarly, it can be derived that
	\begin{align}
		\left|\bbE\left[XY\mathds{1}_{|Y|>R_2}\right]\right|\leq4\sigma_1(\sigma_2^2+R_2^2)^{1/2}e^{-\frac{R_2^2}{4\sigma_2^2}}.\notag
	\end{align}
	Finally, for $\bbE\left[XY\mathds{1}_{|X|>R_1}\mathds{1}_{|Y|>R_2}\right]$, we have
	\begin{align}
		\left|\bbE\left[XY\mathds{1}_{|X|>R_1}\mathds{1}_{|Y|>R_2}\right]\right|\leq&\left(\bbE\left[X^2\mathds{1}_{|X|>R_1}Y\right]\right)^{1/2}\left(\bbE\left[Y^2\mathds{1}_{|Y|>R_2}Y\right]\right)^{1/2}\notag
		\\
		\leq&4(\sigma_1^2+R_1^2)^{1/2}(\sigma_2^2+R_2^2)^{1/2}e^{-\left(\frac{R_1^2}{4\sigma_1^2}+\frac{R_2^2}{4\sigma_2^2}\right)}.\notag
	\end{align}
	Therefore, we only need to set $R_1$ and $R_2$ as:
	\begin{align}\label{frac-estimator}
		R_1=\sqrt{2}\sigma_1\log^{1/2}\left(\frac{256\max\{\sigma_2^2,1\}(\sigma_1^2+K)}{\tau^2}\right),\quad R_2=\sqrt{2}\sigma_2\log^{1/2}\left(\frac{256\max\{\sigma_1^2,1\}(\sigma_2^2+K)}{\tau^2}\right),
	\end{align}
	where $K$ satisfies $K\geq2(\sigma_1^2+\sigma_2^2)\log\left(256\max\{\sigma_1^2,\sigma_2^2,1\}(\sigma_1^2+\sigma_2^2+K)\tau^{-2}\right)$.
\end{proof}
\begin{lemma}\label{main-estimate-forth-moment}
	Suppose zero-mean random variables $\{X_i\}_{i=1}^4$ are sub-Gaussian with parameters $\{\sigma_i\}_{i=1}^4$, respectively. Then, for any $\tau\in(0,1)$, there exists positive constants $\{R_i\}_{i=1}^4$ such that
	\begin{align}
		\left|\bbE\left[\prod_{i=1}^{4}X_i\mathds{1}_{|X_i|\leq R_i}\right]-\bbE\left[\prod_{i=1}^{4}X_i\right]\right|\leq\tau.\notag
	\end{align}
\end{lemma}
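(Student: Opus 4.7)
The plan is to extend the two-variable truncation argument of Lemma \ref{aux-truncation-subGaussian-covariance} to four variables via an inclusion-exclusion style decomposition followed by the four-variable generalized H\"older inequality. Writing $Y_i = X_i \mathds{1}_{|X_i|\leq R_i}$ and $Z_i = X_i \mathds{1}_{|X_i|>R_i}$, so that $X_i = Y_i + Z_i$, I would expand
\begin{align*}
\prod_{i=1}^4 X_i \;-\; \prod_{i=1}^4 Y_i \;=\; \sum_{\emptyset \neq S \subseteq \{1,2,3,4\}} \prod_{i\in S} Z_i \prod_{i\notin S} Y_i,
\end{align*}
so that taking expectations expresses the desired error as a sum of $2^4-1=15$ terms, each containing at least one truncated-tail factor $Z_i$.

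Next I would control each term by the four-variable H\"older inequality
\begin{align*}
\left|\bbE\left[\prod_{i\in S} Z_i \prod_{i\notin S} Y_i\right]\right| \;\leq\; \prod_{i\in S}\!\left(\bbE[Z_i^4]\right)^{1/4}\! \prod_{i\notin S}\!\left(\bbE[Y_i^4]\right)^{1/4},
\end{align*}
and then apply two standard facts: (a) $\bbE[Y_i^4]\leq \bbE[X_i^4]\lesssim \sigma_i^4$, the standard fourth-moment bound for sub-Gaussian random variables; and (b) the tail estimate from Eq.~\eqref{original-estimate-forth-moment} in the proof of Lemma \ref{aux-truncation-subGaussian-second-moment}, namely $\bbE[Z_i^4]=\bbE[X_i^4\mathds{1}_{|X_i|>R_i}] \leq (4\sigma_i^2+2R_i^2)^2\exp(-R_i^2/(2\sigma_i^2))$. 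Because $S$ is non-empty, every one of the 15 terms acquires at least one exponentially decaying factor $\exp(-R_i^2/(8\sigma_i^2))$ against only polynomial-in-$R_i$ growth, so choosing each $R_i$ of order $\sigma_i\sqrt{\log(1/\tau)}$ (up to logarithmic corrections in the $\sigma_j$) forces each summand below $\tau/15$.

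The only real difficulty is bookkeeping: because the polynomial prefactor $(4\sigma_i^2+2R_i^2)^{1/2}$ depends on $R_i$ itself and because the non-truncated indices contribute further $\sigma_j$ factors, the threshold must be implicitly defined, exactly analogous to the condition $K \geq 2(\sigma_1^2+\sigma_2^2)\log(\cdot)$ that appears in Eq.~\eqref{frac-estimator}. I would resolve this by the same fixed-point trick used there, namely writing $R_i = c\sigma_i \sqrt{\log M}$ and choosing $M$ large enough that $M\geq C(\tau,\sigma_1,\ldots,\sigma_4)\cdot\text{poly}(\log M)$, which admits a solution for all sufficiently large $M$. Once that $M$ is fixed, the corresponding $R_i$ satisfy the lemma, completing the argument.
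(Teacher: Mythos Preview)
Your proposal is correct and essentially matches the paper's proof. The paper also expands via $X_i\mathds{1}_{|X_i|\le R_i}=X_i(1-\mathds{1}_{|X_i|>R_i})$ into $15$ terms each carrying at least one tail indicator, bounds each via the iterated Cauchy--Schwarz inequality $\left|\bbE\prod Y_i\right|\le\prod(\bbE[Y_i^4])^{1/4}$, invokes Eq.~\eqref{original-estimate-forth-moment}, and closes with the same fixed-point choice of $R_i$; the only cosmetic difference is that the paper's non-tail factors are the full $X_j$ rather than your truncated $Y_j$, which is immaterial since $\bbE[Y_j^4]\le\bbE[X_j^4]$.
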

\begin{proof}
	According to
	\begin{align}
		\bbE\left[\prod_{i=1}^{4}X_i\mathds{1}_{|X_i|\leq R_i}\right]=\bbE\left[\prod_{i=1}^4 X_i\left(1-\mathds{1}_{|X_i|>R_i}\right)\right],\notag
	\end{align}
	we have
	\begin{align}\label{decomp-prod}
		&\bbE\left[\prod_{i=1}^{4}X_i\mathds{1}_{|X_i|\leq R_i}\right]-\bbE\left[\prod_{i=1}^{4}X_i\right]\notag
		\\
		=&-\sum_{i=1}^4\bbE\left[X_i\mathds{1}_{|X_i|>R_i}\prod_{j\neq i}X_j\right]+\sum_{1\leq i<j\leq4}\bbE\left[X_iX_j\prod_{k\neq i,j}X_k\mathds{1}_{|X_k|> R_k}\right]\notag
		\\
		&-\sum_{i=1}^4\bbE\left[X_i\prod_{j\neq i}X_j\mathds{1}_{|X_j|> R_j}\right]+\bbE\left[\prod_{i=1}^4X_i\mathds{1}_{|X_i|>R_i}\right].
	\end{align}
	For any random variables $\{Y_i\}_{i=1}^4$, one can notice that
	\begin{align}\label{estimate-forth-moment}
		\left|\bbE\left[\prod_{i=1}^4Y_i\right]\right|\overset{\text{(a)}}{\leq}\left(\bbE\left[Y_1^2Y_2^2\right]\right)^{1/2}\left(\bbE\left[Y_3^2Y_4^2\right]\right)^{1/2}\leq\prod_{i=1}^4\left(\bbE\left[Y_i^4\right]\right)^{1/4}.
	\end{align}
	Applying Eq.~\eqref{estimate-forth-moment} to Eq.~\eqref{decomp-prod}, we obtain
	\begin{align}
		&\left|\bbE\left[\prod_{i=1}^{4}X_i\mathds{1}_{|X_i|\leq R_i}\right]-\bbE\left[\prod_{i=1}^{4}X_i\right]\right|\notag
		\\
		\leq&\sum_{i=1}^4\left(\bbE\left[X_i^4\mathds{1}_{|X_i|>R_i}\right]\right)^{1/4}\prod_{j\neq i}\left(\bbE\left[X_j^4\right]\right)^{1/4}+\sum_{1\leq i<j\leq4}\prod_{l=i,j}\left(\bbE\left[X_l^4\right]\right)^{1/4}\prod_{k\neq i,j}\left(\bbE\left[X_k^4\mathds{1}_{|X_k|>R_k}\right]\right)^{1/4}\notag
		\\
		&+\sum_{i=1}^4\left(\bbE\left[X_i^4\right]\right)^{1/4}\prod_{j\neq i}\left(\bbE\left[X_j^4\mathds{1}_{|X_j|>R_j}\right]\right)^{1/4}+\prod_{i=1}^4\left(\bbE\left[X_i^4\mathds{1}_{|X_i|>R_i}\right]\right)^{1/4}\notag
		\\
		\overset{\text{(a)}}{\leq}&16\left(\prod_{i=1}^4\sigma_i\right)\left[\sum_{i=1}^4\left(1+\frac{R_i^2}{\sigma_i^2}\right)^{1/2}e^{-\frac{R_i^2}{8\sigma_i^2}}+\sum_{1\leq i<j\leq4}\left(\prod_{k\neq i,j}\left(1+\frac{R_k^2}{\sigma_k^2}\right)^{1/2}e^{-\frac{R_k^2}{8\sigma_k^2}}\right)\right.\notag
		\\
		&\, \, \, \, \, \, \, \, \, \, \, \, \, \, \, \, \, \, \, \, \, \, \, \, \, \, \, \, \, \, \, \, \, \, \, \left.+\sum_{i=1}^4\left(\prod_{j\neq i}\left(1+\frac{R_j^2}{\sigma_j^2}\right)^{1/2}e^{-\frac{R_j^2}{8\sigma_j^2}}\right)+\prod_{i=1}^4\left(1+\frac{R_i^2}{\sigma_i^2}\right)^{1/2}e^{-\frac{R_i^2}{8\sigma_i^2}}\right].\notag
	\end{align}
	where (a) is derived from Eq.~\eqref{original-estimate-forth-moment}. Therefore, we only need to set $\{R_i\}_{i=1}^4$ as:
	\begin{align}
		R_i=2\sqrt{2}\sigma_i\log^{1/2}\left(\frac{4196\prod_{j\neq i}\max\{\sigma_j^2,1\}(\sigma_i^2+K_i)}{\tau}\right),\quad \forall i\in[1:4],\notag
	\end{align}
	where each $K_i$ satisfies $K_i\geq8\sigma_i^2\log\left(4196\tau^{-1}(\sigma_i^2+K_i)\prod_{j\neq i}\max\{\sigma_j^2,1\}\right)$.
\end{proof}
\begin{lemma}\label{aux-martingale-concentration}
	Let $c>0$, $\gamma<1$ and $a_t>0$ for any $t\in[0:T-1]$. Consider a sequence of random variables $\{v^t\}_{t=0}^{T-1}\subset[0,2c]$, which satisfies 
	$\bbE[e^{\lambda(v^{t+1}-(1-\eta_t)v^t)}\mid\calF_t]\leq e^{\frac{\lambda^2a_t^2}{2}}$ almost surely for any $\lambda\in\bbR_+$ with stepsize $\eta_t\geq0$. Then, there is 
    \begin{align}
        \bbP\left(v^T>c\bigwedge v^0\leq\gamma c\right)\leq\exp\left\{-\frac{(1-\gamma)^2c^2}{2\sum_{j=0}^{T-1}a_j^2\prod_{i=j+1}^{t-1}(1-\eta_i)^{2}}\right\}.\notag
    \end{align}
\end{lemma}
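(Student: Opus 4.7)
The plan is to reduce this to a one-sided sub-Gaussian tail bound for a weighted martingale noise sum. Define the residuals $\xi_t := v^{t+1} - (1-\eta_t) v^t$, which by assumption satisfy $\mathbb{E}[e^{\lambda \xi_t}\mid \mathcal{F}_t] \le e^{\lambda^2 a_t^2/2}$ for all $\lambda \ge 0$. Unrolling the recursion $v^{t+1} = (1-\eta_t)v^t + \xi_t$ yields the closed form
\begin{align*}
    v^T \;=\; \Big(\prod_{i=0}^{T-1}(1-\eta_i)\Big)\,v^0 \;+\; \sum_{j=0}^{T-1} w_j\,\xi_j, \qquad w_j := \prod_{i=j+1}^{T-1}(1-\eta_i).
\end{align*}
Assuming without loss of generality that $\eta_i \in [0,1]$ (otherwise the statement is vacuous in the regime of interest), all $w_j$ lie in $[0,1]$, so on the event $\{v^T > c\} \cap \{v^0 \le \gamma c\}$ we obtain the deterministic implication
\begin{align*}
    \sum_{j=0}^{T-1} w_j \xi_j \;>\; c - \Big(\prod_{i=0}^{T-1}(1-\eta_i)\Big) \gamma c \;\ge\; (1-\gamma)c.
\end{align*}

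The second step is to exhibit an exponential supermartingale controlling the weighted noise sum. For any fixed $\lambda \ge 0$, let
\begin{align*}
    M_t \;:=\; \exp\Big(\lambda \sum_{j=0}^{t-1} w_j \xi_j \;-\; \tfrac{\lambda^2}{2} \sum_{j=0}^{t-1} w_j^2 a_j^2 \Big).
\end{align*}
Since $\lambda w_t \ge 0$, the hypothesis gives $\mathbb{E}[e^{\lambda w_t \xi_t}\mid \mathcal{F}_t] \le e^{\lambda^2 w_t^2 a_t^2/2}$, whence $\mathbb{E}[M_{t+1}\mid\mathcal{F}_t] \le M_t$, so $\{M_t\}$ is a nonnegative supermartingale with $M_0 = 1$ and in particular $\mathbb{E} M_T \le 1$.

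Combining the two steps via Markov's inequality,
\begin{align*}
    \mathbb{P}\!\left(v^T > c,\; v^0 \le \gamma c\right)
    \;\le\; \mathbb{P}\!\left(\sum_{j=0}^{T-1} w_j \xi_j > (1-\gamma)c\right)
    \;\le\; \exp\!\Big(-\lambda(1-\gamma)c + \tfrac{\lambda^2}{2}\sum_{j=0}^{T-1} w_j^2 a_j^2\Big),
\end{align*}
and optimizing in $\lambda \ge 0$ at $\lambda^\star = (1-\gamma)c/\sum_j w_j^2 a_j^2$ delivers the claimed bound, with $w_j^2 = \prod_{i=j+1}^{T-1}(1-\eta_i)^2$ (correcting the apparent typo in the upper product index of the statement).

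The only nontrivial obstacle is the second inequality in the first display: one must use that the shrinkage product is at most $1$ so that the $v^0 \le \gamma c$ bound transfers cleanly into a lower bound of $(1-\gamma)c$ on the noise sum; this tacitly needs $\eta_i \le 1$, which is satisfied in every invocation of the lemma in the main text. Everything else is the standard Chernoff–martingale argument, so no further subtlety is expected.
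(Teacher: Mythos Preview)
Your proof is correct and takes essentially the same approach as the paper: the paper rescales the process to $\tilde v^t=\prod_{i<t}(1-\eta_i)^{-1}v^t$ and applies Chernoff to the telescoping increments $D_{t+1}=\tilde v^{t+1}-\tilde v^t$, which is algebraically identical to your forward unrolling with weights $w_j=\prod_{i=j+1}^{T-1}(1-\eta_i)$. Your explicit note that the argument tacitly needs $\eta_i\in[0,1]$ (so the weights are nonnegative and the shrinkage product is at most one) is a point the paper uses implicitly as well.
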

\begin{proof}
    We define $D_{t+1}:=\prod_{i=0}^{t}(1-\eta_i)^{-1}\tilde{v}^{t+1}-\prod_{i=0}^{t-1}(1-\eta_i)^{-1}\tilde{v}^t$ for any $t\in[0:T-1]$. Therefore, applying iterated expectation yields 
    \begin{align}\label{eq-sub-gaussian}
        \bbE\left[e^{\lambda\left(\sum_{i=1}^tD_i\right)}\right]=&\bbE\left[e^{\lambda\left(\sum_{i=1}^{t-1}D_{i}\right)}\bbE\left[\left.e^{\lambda D_t}\right|\calF_{t-1}\right]\right]\notag
        \\
        =&\bbE\left[e^{\lambda\left(\sum_{i=1}^{t-1}D_{i}\right)}\bbE\left[\left.e^{\frac{\lambda}{\prod_{i=0}^{t-1}(1-\eta_i)} \left(v^{t}-(1-\eta_{t-1})v^{t-1}\right)}\right|\calF_{t-1}\right]\right]
        \notag
        \\
        \overset{\text{(a)}}{\leq}&e^{\frac{\lambda^2a_{t-1}^2}{2\prod_{i=0}^{t-1}(1-\eta_i)^2}}\bbE\left[e^{\lambda\left(\sum_{i=1}^{t-1}D_{i}\right)}\right]
        \notag
        \\
        \leq&e^{\frac{\lambda^2\sum_{j=0}^{t-1}a_j^2\prod_{i=0}^j(1-\eta_i)^{-2}}{2}},
    \end{align}
    for any $\lambda\in\bbR^+$ and $t\in[1:T]$, where 
    (a) follows from the condition that $\bbE[e^{\lambda(v^{t}-(1-\eta_{t-1})v^{t-1})}\mid\calF_{t-1}]\leq e^{\frac{\lambda^2a_{t-1}^2}{2}}$ almost surely for any $\lambda\in\bbR_+$. Then we obtain 
    \begin{align}
        \bbP\left(v^T>c\bigwedge v^0\leq\gamma c\right){\leq}&\bbP\left(\prod_{i=0}^{T-1}(1-\eta_i)^{-1}v^{t}>\prod_{i=0}^{T-1}(1-\eta_i)^{-1}c\bigwedge v^0\leq\gamma c\right)\notag
        \\
        \leq&\min_{\lambda>0}\frac{\bbE\left[e^{\lambda(\sum_{i=1}^TD_i)}\right]}{e^{\lambda\left(\prod_{i=0}^{T-1}(1-\eta_i)^{-1}c-\gamma c\right)}}
        \notag
        \\
        \overset{\text{(b)}}{\leq}&\exp\left\{-\frac{\left(\prod_{i=0}^{T-1}(1-\eta_i)^{-1}c-\gamma c\right)^2}{2\sum_{j=0}^{T-1}a_j^2\prod_{i=0}^{j}(1-\eta_i)^{-2}}\right\}\notag
        \\
        \leq&\exp\left\{-\frac{(1-\gamma)^2\left(\prod_{i=0}^{T-1}(1-\eta_i)^{-1}c\right)^2}{2\sum_{j=0}^{T-1}a_j^2\prod_{i=0}^{j}(1-\eta_i)^{-2}}\right\}\notag
        \\
        =&\exp\left\{-\frac{(1-\gamma)^2c^2}{2\sum_{j=0}^{T-1}a_j^2\prod_{i=j+1}^{T-1}(1-\eta_i)^{2}}\right\},
    \end{align}
    where (b) is derived from Eq.~\eqref{eq-sub-gaussian}.
\end{proof}
\begin{lemma}\label{aux-martingale-concentration-subtraction}
	Let $c>\gamma>0$ and $a_t>0$ for any $t\in[0:T-1]$. Consider a sequence of random variables $\{v^t\}_{t=0}^{T-1}\subset[0,c]$, which satisfies 
	$\bbE[e^{\lambda(v^{t+1}+\eta_t-v^t)}\mid\calF_t]\leq e^{\frac{\lambda^2a_t^2}{2}}$ almost surely for any $\lambda\in\bbR_+$ with stepsize $\eta_t\geq0$. Suppose $\sum_{t=0}^{T-1}\eta_t>v^0$, there is
	\begin{align}
		\bbP\left(v^T>\gamma\right)\leq\exp\left\{-\frac{\left(\gamma+\sum_{t=0}^{T-1}\eta_t-v^0\right)^2}{2\sum_{j=0}^{T-1}a_j^2}\right\}.\notag
	\end{align}
\end{lemma}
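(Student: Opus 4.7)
The plan is to reduce this to a standard sub-Gaussian Chernoff argument applied to a telescoped sum. Define the cumulative increment
\[
    M_T \;:=\; \sum_{t=0}^{T-1} \bigl(v^{t+1} + \eta_t - v^{t}\bigr) \;=\; v^T - v^0 + \sum_{t=0}^{T-1}\eta_t,
\]
so that the event $\{v^T > \gamma\}$ is exactly $\{M_T > \gamma + \sum_{t=0}^{T-1}\eta_t - v^0\}$. By hypothesis the right-hand threshold $u := \gamma + \sum_{t=0}^{T-1}\eta_t - v^0$ is strictly positive, because $\gamma > 0$ and $\sum_{t=0}^{T-1}\eta_t > v^0$.

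Next I would iteratively apply the per-step MGF bound $\mathbb{E}[\exp\{\lambda(v^{t+1}+\eta_t - v^{t})\}\mid \mathcal{F}_t] \le \exp\{\lambda^2 a_t^2/2\}$ together with the tower property. Concretely, for any $\lambda > 0$, conditioning on $\mathcal{F}_{T-1}$ and peeling off the last term gives
\[
    \mathbb{E}\!\left[e^{\lambda M_T}\right] \;=\; \mathbb{E}\!\left[e^{\lambda \sum_{t=0}^{T-2}(v^{t+1}+\eta_t-v^t)}\,\mathbb{E}\!\left[e^{\lambda(v^T+\eta_{T-1}-v^{T-1})}\,\big|\,\mathcal{F}_{T-1}\right]\right] \;\le\; e^{\lambda^2 a_{T-1}^2/2}\,\mathbb{E}\!\left[e^{\lambda \sum_{t=0}^{T-2}(v^{t+1}+\eta_t-v^t)}\right].
\]
Iterating this $T$ times produces $\mathbb{E}[e^{\lambda M_T}] \le \exp\bigl\{\tfrac{\lambda^2}{2}\sum_{j=0}^{T-1}a_j^2\bigr\}$, which is the standard sub-Gaussian MGF bound for a sum of conditionally sub-Gaussian increments. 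This step parallels the derivation already carried out in the proof of Lemma \ref{aux-martingale-concentration} (see Eq.~\eqref{eq-sub-gaussian} there), just without the geometric weighting, so the same manipulation applies essentially verbatim.

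Finally I would invoke Markov's inequality in Chernoff form:
\[
    \mathbb{P}(v^T > \gamma) \;=\; \mathbb{P}(M_T > u) \;\le\; \inf_{\lambda > 0} \exp\!\left\{-\lambda u + \tfrac{\lambda^2}{2}\sum_{j=0}^{T-1} a_j^2\right\}
    \;=\; \exp\!\left\{-\frac{u^2}{2\sum_{j=0}^{T-1}a_j^2}\right\},
\]
where the optimum is achieved at $\lambda^\star = u/\sum_{j} a_j^2 > 0$ (admissible since $u > 0$). Substituting $u = \gamma + \sum_{t=0}^{T-1}\eta_t - v^0$ gives the claimed bound.

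There is no genuine obstacle here: the proof is a direct telescoping plus Chernoff. The only points that warrant a sentence of care are (i) verifying that $u > 0$ so that the optimizer $\lambda^\star$ is in the valid domain $\mathbb{R}_+$ for which the one-sided sub-Gaussian hypothesis holds, and (ii) the tower-property iteration, which must be unrolled from the last index inward so that at each step the conditioning sigma-algebra makes all earlier increments $\mathcal{F}_{t}$-measurable. Both are routine and the result then follows.
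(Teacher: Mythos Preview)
Your proposal is correct and follows essentially the same approach as the paper: the paper likewise defines the increments $D_{t+1}=v^{t+1}+\eta_t-v^t$ (so that $\sum_{i=1}^T D_i$ is your $M_T$), iterates the tower property to obtain the sub-Gaussian MGF bound $\mathbb{E}[e^{\lambda\sum_i D_i}]\le e^{\lambda^2\sum_j a_j^2/2}$, and then applies the Chernoff argument at the optimal $\lambda>0$. Your remarks on the positivity of $u$ and the order of conditioning are exactly the points of care the paper's argument implicitly relies on.
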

\begin{proof}
	We define $D_{t+1}:=v^{t+1}+\sum_{i=0}^t\eta_i-\left(v^t+\sum_{i=0}^{t-1}\eta_i\right)$ for any $t\in[0:T-1]$. Therefore, applying iterated expectation yields
	\begin{align}\label{eq-sub-gaussian-substraction}
		\bbE\left[e^{\lambda\left(\sum_{i=1}^{t}D_i\right)}\right]=&\bbE\left[e^{\lambda\left(\sum_{i=1}^{t-1}D_i\right)}\bbE\left[\left.e^{\lambda D_t}\right|\calF_{t-1}\right]\right]\notag
		\\
		=&\bbE\left[e^{\lambda\left(\sum_{i=1}^{t-1}D_{i}\right)}\bbE\left[\left.e^{\lambda(v^t+\eta_{t-1}-v^{t-1})}\right|\calF_{t-1}\right]\right]
		\notag
		\\
		\overset{\text{(a)}}{\leq}&e^{\frac{\lambda^2a_{t-1}^2}{2}}\bbE\left[e^{\lambda\left(\sum_{i=1}^{t-1}D_{i}\right)}\right]
		\notag
		\\
		\leq&e^{\frac{\lambda^2\sum_{j=0}^{t-1}a_j^2}{2}},
	\end{align}
	for any $\lambda\in\bbR^+$ and $t\in[1:T]$, where 
	(a) follows from the condition that $\bbE[e^{\lambda(v^{t}+\eta_{t-1}-v^{t-1})}\mid\calF_{t-1}]\leq e^{\frac{\lambda^2a_{t-1}^2}{2}}$ almost surely for any $\lambda\in\bbR_+$. Then we obtain 
	\begin{align}
		\bbP\left(v^T>\gamma\right)\leq&\min_{\lambda>0}\frac{\bbE\left[e^{\lambda(\sum_{i=1}^TD_i)}\right]}{e^{\lambda\left(\gamma+\sum_{t=0}^{T-1}\eta_t-v^0\right)}}
		\notag
		\\
		\overset{\text{(b)}}{\leq}&\exp\left\{-\frac{\left(\gamma+\sum_{t=0}^{T-1}\eta_t-v^0\right)^2}{2\sum_{j=0}^{T-1}a_j^2}\right\},
	\end{align}
	where (b) is derived from Eq.~\eqref{eq-sub-gaussian-substraction}.
\end{proof}
\begin{corollary}\label{aux-coro-3}
    Let $c>0$, $\gamma<1$ and $a_t>0$ for any $t\in[0:T-1]$. Consider a sequence of random variables $\{v^i\}_{i=0}^{T-1}\subset[0,c]$, which satisfies $\prod_{i=0}^{T-1}(1+\eta_t)^{-1}c-v^0\geq\gamma c$ with stepsize $\eta_t\geq0$, given $\bbE[e^{\lambda(v^{t+1}-(1+\eta_t)v^t)}\mid\calF_t]\leq e^{\frac{\lambda^2a_t^2}{2}}$ almost surely for any $\lambda\in\bbR_+$. Then, there is 
    \begin{align}
        \bbP\left(v^T>c\right)\leq\exp\left\{-\frac{\gamma^2c^2}{2\sum_{j=0}^{T-1}a_j^2\prod_{i=0}^{j}(1+\eta_i)^{-2}}\right\}\notag.
    \end{align}
\end{corollary}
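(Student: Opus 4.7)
The plan is to mirror the argument of Lemma \ref{aux-martingale-concentration} almost verbatim, with every occurrence of $(1-\eta_t)$ replaced by $(1+\eta_t)$. The core idea is again to construct a rescaled sequence whose increments are centered, bound its moment generating function by iterated conditional expectations, and then apply a Chernoff bound.

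Concretely, I would first define the scaled increments
\[
    D_{t+1} := \Bigl[\prod_{i=0}^{t}(1+\eta_i)^{-1}\Bigr] v^{t+1} - \Bigl[\prod_{i=0}^{t-1}(1+\eta_i)^{-1}\Bigr] v^{t},
\]
so that
\[
    \sum_{i=1}^{T} D_i \;=\; \Bigl[\prod_{i=0}^{T-1}(1+\eta_i)^{-1}\Bigr] v^{T} - v^{0}.
\]
A direct computation shows $D_t = \prod_{i=0}^{t-1}(1+\eta_i)^{-1}\bigl(v^{t} - (1+\eta_{t-1})v^{t-1}\bigr)$, and plugging this into the hypothesis
$\bbE\bigl[e^{\lambda(v^{t}-(1+\eta_{t-1})v^{t-1})}\mid \calF_{t-1}\bigr]\le e^{\lambda^2 a_{t-1}^2/2}$ (valid for $\lambda\in\bbR_+$) yields
\[
    \bbE\bigl[e^{\lambda D_t}\mid\calF_{t-1}\bigr]\le\exp\!\Bigl\{\tfrac{\lambda^2 a_{t-1}^2}{2\prod_{i=0}^{t-1}(1+\eta_i)^{2}}\Bigr\}.
\]
Iterating the tower property then gives the unconditional bound
\[
    \bbE\!\left[e^{\lambda \sum_{i=1}^{T} D_i}\right] \;\le\; \exp\!\Bigl\{\tfrac{\lambda^{2}}{2}\sum_{j=0}^{T-1} a_j^{2}\prod_{i=0}^{j}(1+\eta_i)^{-2}\Bigr\}.
\]

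Next, I would translate the target event $\{v^{T}>c\}$ into a tail event for $\sum_{i=1}^T D_i$: since the event is equivalent to $\sum_{i=1}^T D_i > \prod_{i=0}^{T-1}(1+\eta_i)^{-1} c - v^{0}$, and by hypothesis this threshold is at least $\gamma c$, we obtain
\[
    \bbP\!\left(v^{T}>c\right)\;\le\;\bbP\!\left(\sum_{i=1}^{T} D_i > \gamma c\right).
\]
A standard Chernoff optimization with $\lambda = \gamma c/\bigl(\sum_{j=0}^{T-1}a_j^2\prod_{i=0}^{j}(1+\eta_i)^{-2}\bigr)$ then produces exactly the claimed bound.

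I do not expect a genuine obstacle in this proof: the hypothesis and conclusion are perfectly parallel to Lemma \ref{aux-martingale-concentration}, and the restriction $\lambda\in\bbR_+$ matches the direction of the desired one-sided tail bound. The only place where one should be slightly careful is verifying that the geometric rescaling by $\prod(1+\eta_i)^{-1}$ (rather than $\prod(1-\eta_i)^{-1}$) still produces a bona fide supermartingale-type MGF estimate; however, since the sub-Gaussian assumption is stated directly on the residual $v^{t+1}-(1+\eta_t)v^{t}$, the scaling enters only through the positive constant $\prod_{i=0}^{t-1}(1+\eta_i)^{-1}$ multiplying $\lambda$, and positivity of this factor is all that is needed for the one-sided MGF bound to be preserved.
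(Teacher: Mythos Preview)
Your proposal is correct and matches exactly the approach the paper intends: Corollary \ref{aux-coro-3} is stated without proof precisely because it follows by the same rescaling-and-Chernoff argument as Lemma \ref{aux-martingale-concentration}, with $(1-\eta_t)$ replaced by $(1+\eta_t)$ throughout. Your verification that the positivity of $\prod_{i=0}^{t-1}(1+\eta_i)^{-1}$ preserves the one-sided MGF bound, and that the hypothesis $\prod_{i=0}^{T-1}(1+\eta_i)^{-1}c - v^0 \ge \gamma c$ lets you replace the threshold by $\gamma c$ in the Chernoff step, are exactly the two points needed.
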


\begin{corollary}\label{aux-coro-4}
	Let $\gamma<1$ and $a_t>0$ for any $t\in[0:T-1]$. Consider a sequence of random variables $\{v^{(i)}\}_{i=0}^{T-1}\subset[0,v^{(0)}]$, which satisfies $\bbE[e^{\lambda(v^{(t+1)}-(1+\eta_t)v^{(t)})}\mid\calF^{(t)}]\leq e^{\frac{\lambda^2a_t^2}{2}}$  almost surely for any $\lambda\in\bbR_-$ with stepsize $\eta_t\geq0$. Then, there is 
	\begin{align}
		\bbP\left(v^{(T)}<\gamma v^{(0)}\right)\leq\exp\left\{-\frac{(1-\gamma)^2(v^{(0)})^2}{2\sum_{j=0}^{T-1}a_j^2\prod_{i=0}^{j}(1+\eta_i)^{-2}}\right\}\notag.
	\end{align}
\end{corollary}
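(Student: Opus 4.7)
The plan is to mirror the Chernoff argument used for the upper-tail companion Corollary \ref{aux-coro-3}, but with sign conventions flipped so that the negative-$\lambda$ sub-Gaussian hypothesis controls the lower tail of the normalized process. Concretely, I would introduce the rescaled sequence $w^{(t)} := v^{(t)}\prod_{i=0}^{t-1}(1+\eta_i)^{-1}$, with $w^{(0)} = v^{(0)}$, and define the increments $D_{t+1} := w^{(t+1)} - w^{(t)} = (v^{(t+1)}-(1+\eta_t)v^{(t)})/\prod_{i=0}^{t}(1+\eta_i)$. Because $1/\prod_{i=0}^{t}(1+\eta_i) > 0$, rescaling preserves the sign of $\lambda$, and the assumed conditional MGF bound transfers directly: for every $\lambda\le 0$,
\[
\bbE\bigl[e^{\lambda D_{t+1}}\,\big|\,\calF^{(t)}\bigr] \le \exp\!\Bigl(\frac{\lambda^2 a_t^2}{2\prod_{i=0}^{t}(1+\eta_i)^2}\Bigr).
\]

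Iterating this bound via the tower property—exactly the computation carried out in Eq.~\eqref{eq-sub-gaussian-substraction}—yields $\bbE\bigl[e^{\lambda(w^{(T)}-w^{(0)})}\bigr]\le\exp(\lambda^2\Sigma_T^2/2)$ for $\lambda\le 0$, where $\Sigma_T^2 := \sum_{j=0}^{T-1}a_j^2\prod_{i=0}^{j}(1+\eta_i)^{-2}$. Next, the target event translates deterministically: since each $\eta_i\ge 0$ gives $\prod_{i=0}^{T-1}(1+\eta_i)\ge 1$, one has the inclusion $\{v^{(T)}<\gamma v^{(0)}\}\subseteq\{w^{(T)}-w^{(0)}\le -(1-\gamma)v^{(0)}\}$, so the problem reduces to a one-sided sub-Gaussian deviation bound for $w^{(T)}-w^{(0)}$.

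The final step is the standard Chernoff optimization with $\lambda<0$: for any $s>0$, $\bbP(w^{(T)}-w^{(0)}\le -s)\le\min_{\lambda<0}\exp(\lambda s+\lambda^2\Sigma_T^2/2)=\exp(-s^2/(2\Sigma_T^2))$, and substituting $s=(1-\gamma)v^{(0)}$ recovers the advertised bound. I do not anticipate a substantive obstacle, since the proof is a sign-flipped twin of that of Corollary \ref{aux-coro-3}. The only points meriting even minor care are (i) confirming that multiplication by the positive factor $1/\prod_{i=0}^{t}(1+\eta_i)$ keeps the rescaled exponent in the $\lambda\le 0$ regime where the MGF hypothesis is active, and (ii) relaxing the exact displacement $v^{(0)}(1-\gamma\prod_{i=0}^{T-1}(1+\eta_i)^{-1})$ to the weaker $(1-\gamma)v^{(0)}$ in order to match the precise form stated; the boundedness condition $\{v^{(i)}\}\subset[0,v^{(0)}]$ is not used beyond ensuring integrability of the increments.
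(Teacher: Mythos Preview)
Your proposal is correct and follows essentially the same approach as the paper: the paper states Corollary~\ref{aux-coro-4} without proof, treating it as the sign-flipped lower-tail analogue of Lemma~\ref{aux-martingale-concentration} and Corollary~\ref{aux-coro-3}, and your rescaled-increment Chernoff argument is exactly that analogue. Your handling of both points (i) and (ii) is correct; in particular the inclusion $\{v^{(T)}<\gamma v^{(0)}\}\subseteq\{w^{(T)}-w^{(0)}\le -(1-\gamma)v^{(0)}\}$ holds for $\gamma\ge 0$ via $\prod_i(1+\eta_i)\ge 1$, and for $\gamma<0$ the left event is empty since $v^{(T)}\ge 0$.
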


\begin{lemma}\label{aux-3}
    For $L,K\in\bbN_+$, consider $T\in\bbN^+$ such that $LK\leq T<(L+1)K$. Then we have 
    \begin{align}
        \sum_{t=0}^{T}\left(\prod_{i=t}^{T}(1-c\eta_t)\right)\eta_t^2\leq\frac{2\eta_0}{c},
    \end{align}
    where $\eta_t=\frac{\eta_0}{2^l}$ if $lK\leq t\leq \min\{(l+1)K-1,T\}$ for any $l\in[0:L]$ and $c>0$ is a constant.
\end{lemma}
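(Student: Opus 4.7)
The natural approach is to partition the sum $\sum_{t=0}^{T}$ into the $L+1$ blocks on which the step size is constant and to bound the contribution of each block separately. On block $l$, indexed by $lK \leq t \leq \min\{(l+1)K - 1, T\}$, the step size is $\eta_t = \eta_0/2^l$. Within the product $\prod_{i=t}^{T}(1-c\eta_i)$ (reading the product index as $i$, not the outer $t$), I would split the product into (i) the contribution from the remainder of block $l$, which equals $(1-c\eta_0/2^l)^{(l+1)K-1-t}$, and (ii) the contribution from all subsequent blocks $m > l$, which equals $\prod_{m=l+1}^{L}(1-c\eta_0/2^m)^{K_m}$ with $K_m = K$ for $m < L$ and $K_L = T+1-LK$. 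Factor (ii) is bounded by $1$ since each $1-c\eta_i \in (0,1]$.

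\textbf{Step 1 (within-block geometric bound).} For each block $l$, I would apply the geometric-series inequality to get
\begin{align*}
    \sum_{t=lK}^{\min\{(l+1)K-1,\,T\}} \left(\frac{\eta_0}{2^l}\right)^{\!2} (1-c\eta_0/2^l)^{(l+1)K-1-t}
    \;\leq\; \left(\frac{\eta_0}{2^l}\right)^{\!2} \sum_{j=0}^{\infty} (1-c\eta_0/2^l)^j
    \;=\; \frac{\eta_0}{c\, 2^l}.
\end{align*}
Combining this with the trivial bound on factor (ii) gives the per-block estimate
\begin{align*}
    \sum_{t=lK}^{\min\{(l+1)K-1,\,T\}} \eta_t^2 \prod_{i=t}^{T}(1-c\eta_i) \;\leq\; \frac{\eta_0}{c\, 2^l}.
\end{align*}

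\textbf{Step 2 (outer geometric sum).} I would then sum over $l=0, 1, \ldots, L$ and apply $\sum_{l=0}^{L} 2^{-l} \leq 2$, obtaining
\begin{align*}
    \sum_{t=0}^{T} \eta_t^2 \prod_{i=t}^{T}(1-c\eta_i) \;\leq\; \sum_{l=0}^{L} \frac{\eta_0}{c\, 2^l} \;\leq\; \frac{2\eta_0}{c},
\end{align*}
which is the claimed bound.

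\textbf{Obstacles.} There is no substantive technical difficulty; the argument is entirely a two-level geometric-series calculation. The only bookkeeping care is (a) handling the possibly truncated final block $l = L$, for which the upper index is $T$ rather than $(L+1)K - 1$ (the same $\sum_{j=0}^{\infty}$ bound covers this case a fortiori), and (b) ensuring $1-c\eta_0/2^l \in [0,1]$ so that discarding factor (ii) is a valid upper bound, which follows from the standing assumption that the step sizes are small enough that $c\eta_0 \leq 1$ (this is implicit throughout the second-phase analysis, e.g.\ in the choice of $\eta_0$ in Lemma~\ref{phase-II-high-probability}).
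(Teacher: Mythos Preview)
Your proposal is correct and matches the paper's proof essentially step for step: partition into the $L+1$ constant-step-size blocks, bound the within-block geometric sum by $\eta_0/(c\,2^l)$ after dropping the cross-block product factor (which is at most $1$), and then sum $\sum_{l=0}^{L}2^{-l}\le 2$. The paper handles the truncated final block by writing it out separately rather than via your ``a fortiori'' remark, but the content is identical.
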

\begin{proof}
    For any $l\in[0:L]$, we have
    \begin{align}\label{aux-eq-lemma3}
        \sum_{t=lK}^{(l+1)K-1}\left(\prod_{i=t}^{T}\left(1-c\eta_t\right)\right)\eta_t^2=&\eta_{lK}^2\left(\prod_{i=(l+1)K}^{T}\left(1-c\eta_t\right)\right)\sum_{t=lK}^{(l+1)K-1}(1-c\eta_{lK})^{(l+1)K-1-t}\notag
        \\
        \leq&\frac{\eta_{lK}}{c}\left(\prod_{i=(l+1)K}^{T}\left(1-c\eta_t\right)\right).
    \end{align}
    Therefore, we obtain the following estimation
    \begin{align}
        \sum_{t=0}^{T}\left(\prod_{i=t}^{T}(1-c\eta_t)\right)\eta_t^2\leq&\sum_{t=0}^{LK-1}\left(\prod_{i=t}^{T}(1-c\eta_t)\right)\eta_t^2+\sum_{t=LK}^T(1-c\eta_{LK})^{T-t}\eta_{LK}^2\notag
        \\
        \overset{\text{(a)}}{\leq}&\frac{\sum_{l=0}^L\eta_{lK}}{c}\leq\frac{2\eta_0}{c}.
    \end{align}
\end{proof}
\begin{lemma}\label{lemma-aux-frac}
	Consider vector $v\in\bbR^d$ and matrix $W,Q\in\bbR^{d\times d}$. Define the function $f:\bbR_+\rightarrow\bbR$ as $f(\eta):=\frac{\llangle v,Wv\rrangle+\eta\llangle v,Qv\rrangle}{\|W+\eta Q\|_{\tF}}$. The following equality holds:
	\begin{align}\label{Taylor-eq1}
		f(\eta)=\frac{\llangle v,Wv\rrangle}{\|W\|_{\tF}}+\eta\left(\frac{\llangle v,Qv\rrangle}{\|W\|_{\tF}}-\frac{\llangle v,Wv\rrangle\llangle W,Q\rrangle}{\|W\|_{\tF}^3}\right)+\frac{\eta^2}{2}\left(-2\calI(\tau\eta)-\calII(\tau\eta)+3\calIII(\tau\eta)\right),
	\end{align}
	where $\tau\in[0,1]$ depends on $\eta$, $v$, $Q$ and $W$. $\calI(x)$, $\calII(x)$ and $\calIII(x)$ have the following definitions for any $x\in\bbR_+$:
	\begin{align}
		\calI(x):=&\frac{\llangle v,Qv\rrangle\cdot\left(\llangle W,Q\rrangle+x\|Q\|_{\tF}^2\right)}{\|W+x Q\|_{\tF}^3},\notag
		\\
		\calII(x):=&\frac{\left(\llangle v,Wv\rrangle+x\llangle v,Qv\rrangle\right)\cdot\|Q\|_{\tF}^2}{\|W+xQ\|_{\tF}^3},\notag
		\\
		\calIII(x):=&\frac{\left(\llangle v,Wv\rrangle+x\llangle v,Qv\rrangle\right)\cdot\left(\llangle W,Q\rrangle+x\|Q\|_{\tF}^2\right)^2}{\|W+xQ\|_{\tF}^5}.\notag
	\end{align}
	Moreover, define another function $g:\bbR_+\rightarrow\bbR$ as $g(\eta):=\left(f(\eta)\right)^{-(k/2-2)}$ for any $k>4$. Then, we obtain the following equality:
	\begin{align}
		g(\eta)=&\left(\frac{\llangle v,Wv\rrangle}{\|W\|_{\tF}}\right)^{-(k/2-2)}-\frac{\eta(k-4)}{2}\left(\frac{\llangle v,Wv\rrangle}{\|W\|_{\tF}}\right)^{-(k/2-1)}\cdot\left(\frac{\llangle v,Qv\rrangle}{\|W\|_{\tF}}-\frac{\llangle v,Wv\rrangle\llangle W,Q\rrangle}{\|W\|_{\tF}^3}\right)\notag
		\\
		&+\frac{\eta^2(k-4)(k-2)}{8}\left(\frac{\llangle v,(W+\gamma\eta Q)v\rrangle}{\|W+\gamma\eta Q\|_{\tF}}\right)^{-k/2}\cdot\left(f'(\gamma\eta)\right)^2\notag
		\\
		&+\frac{\eta^2(k-4)}{4}\left(\frac{\llangle v,(W+\gamma\eta Q)v\rrangle}{\|W+\gamma\eta Q\|_{\tF}}\right)^{-(k/2-1)}\cdot\left(2\calI(\gamma\eta)+\calII(\gamma\eta)-3\calIII(\gamma\eta)\right),\notag
	\end{align}
	where $\gamma\in[0,1]$ depends on $\eta$, $v$, $Q$ and $W$.
\end{lemma}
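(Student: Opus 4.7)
\textbf{Proof proposal for Lemma \ref{lemma-aux-frac}.} The lemma is a direct second–order Taylor expansion of $f$ (and then of $g = f^{-(k/2-2)}$) around $\eta = 0$, written with a Lagrange remainder. The plan is therefore purely computational: compute $f'(\eta)$ and $f''(\eta)$ in closed form, evaluate $f(0), f'(0)$, recognize the three residual terms $\mathcal{I}, \mathcal{II}, \mathcal{III}$ in the formula for $f''(\tau\eta)$, and then differentiate once more to handle $g$.

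First I would set up auxiliary scalars $A(\eta) := \langle v, Wv\rangle + \eta \langle v, Qv\rangle$, $B(\eta) := \langle W, Q\rangle + \eta \|Q\|_{\tF}^2$, and note $\|W + \eta Q\|_{\tF}^2 = \|W\|_{\tF}^2 + 2\eta B(0) + \eta^2 \|Q\|_{\tF}^2 - 2\eta^2 B(0) + 2\eta^2 B(0)$, so that $\frac{d}{d\eta}\|W+\eta Q\|_{\tF} = B(\eta)/\|W+\eta Q\|_{\tF}$. With this, $f(\eta) = A(\eta)/\|W+\eta Q\|_{\tF}$ has
\begin{align*}
f'(\eta) = \frac{\langle v, Qv\rangle}{\|W+\eta Q\|_{\tF}} - \frac{A(\eta) B(\eta)}{\|W+\eta Q\|_{\tF}^3}.
\end{align*}
Differentiating one more time and grouping terms produces exactly
\begin{align*}
f''(\eta) = -2\mathcal{I}(\eta) - \mathcal{II}(\eta) + 3\mathcal{III}(\eta),
\end{align*}
with the three $\mathcal{I}, \mathcal{II}, \mathcal{III}$ as defined in the lemma. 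Evaluating $f(0)$ and $f'(0)$ and applying Taylor's theorem with Lagrange remainder at $\tau\eta$ (for some $\tau \in [0,1]$ depending on the data) gives Eq.~\eqref{Taylor-eq1} verbatim.

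For $g(\eta) = (f(\eta))^{-(k/2-2)}$, I would simply apply the chain rule:
\begin{align*}
g'(\eta) &= -\tfrac{k-4}{2}\, (f(\eta))^{-(k/2-1)} f'(\eta),\\
g''(\eta) &= \tfrac{(k-4)(k-2)}{4} (f(\eta))^{-k/2} (f'(\eta))^2 - \tfrac{k-4}{2} (f(\eta))^{-(k/2-1)} f''(\eta).
\end{align*}
Evaluating $g(0)$ and $g'(0)$ from the $f$-expansion above, applying the Lagrange remainder at $\gamma\eta$ for some $\gamma \in [0,1]$, and substituting the closed form for $f''(\gamma\eta)$ produces the stated expression (the sign on the $(2\mathcal{I} + \mathcal{II} - 3\mathcal{III})$ block flips because of the $-(k-4)/2$ factor in $g''$).

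There is no genuine obstacle: every step is elementary differentiation. The only thing that needs care is bookkeeping—ensuring that the Lagrange intermediate point $\gamma\eta$ in the second-order term is consistent between the quadratic-in-$f'$ term (which depends on $f'(\gamma\eta)$) and the $f''$ term (which produces $\mathcal{I}(\gamma\eta), \mathcal{II}(\gamma\eta), \mathcal{III}(\gamma\eta)$), and checking that $\|W+\gamma\eta Q\|_{\tF}>0$ on the relevant range so that all denominators are well defined. Under the implicit nondegeneracy $W + \eta Q \neq 0$ used throughout the paper (this holds automatically in the regimes where Lemma \ref{estimation} is invoked, since $\|\Wt + \bar\eta^{(t)} G^{(t)}\|_{\tF}/\|\Wt\|_{\tF}$ is shown to stay within $[1-1/k^2, 1+1/k^2]$), $f$ and $g$ are $C^2$ in a neighborhood of $[0,\eta]$, so the remainder form is valid and the lemma follows.
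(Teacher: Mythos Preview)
Your proposal is correct and follows essentially the same approach as the paper: compute $f'(\eta)$ and $f''(\eta)$ explicitly, identify $f''(\eta)=-2\calI(\eta)-\calII(\eta)+3\calIII(\eta)$, apply Taylor's theorem with Lagrange remainder, and then repeat for $g$ via the chain rule. The paper's proof is slightly terser (it omits the auxiliary $A,B$ notation and the nondegeneracy discussion), but the argument is identical.
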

\begin{proof}
	Observe that the first derivative of $f(\eta)$ is:
	\begin{align}\label{fd}
		f'(\eta)=\frac{\llangle v,Qv\rrangle}{\|W+\eta Q\|_{\tF}}-\frac{\left(\llangle v,Wv\rrangle+\eta\llangle v,Qv\rrangle\right)\cdot\left(\llangle W,Q\rrangle+\eta\|Q\|_{\tF}^2\right)}{\|W+\eta Q\|_{\tF}^3},
	\end{align}
	In addition, one can notice the second derivative of $f(\eta)$ has the following expression:
	\begin{equation}\label{sd}
		\begin{split}
			f''(\eta)=&-2\underbrace{\frac{\llangle v,Qv\rrangle\cdot\left(\llangle W,Q\rrangle+\eta\|Q\|_{\tF}^2\right)}{\|W+\eta Q\|_{\tF}^3}}_{\calI(\eta)}-\underbrace{\frac{\left(\llangle v,Wv\rrangle+\eta\llangle v,Qv\rrangle\right)\cdot\|Q\|_{\tF}^2}{\|W+\eta Q\|_{\tF}^3}}_{\calII(\eta)}
			\\
			&+3\underbrace{\frac{\left(\llangle v,Wv\rrangle+\eta\llangle v,Qv\rrangle\right)\cdot\left(\llangle W,Q\rrangle+\eta\|Q\|_{\tF}^2\right)^2}{\|W+\eta Q\|_{\tF}^5}}_{\calIII(\eta)},
		\end{split}
	\end{equation}
	Therefore, combining Eq.~\eqref{fd} and Eq.~\eqref{sd} with the Taylor expansion of $f(\eta)$, we complete the proof of Eq.~\eqref{Taylor-eq1}.
	\begin{align}
		f(\eta)=\frac{\llangle v,Wv\rrangle}{\|W\|_{\tF}}+\eta\left[\left.f'(x)\right|_{x=0}\right]+\frac{\eta^2}{2}\left[\left.f''(x)\right|_{x=\tau\eta}\right],\notag
	\end{align}
	where $\tau\in[0,1]$ is a scaling parameter dependent on $\eta$, $v$, $Q$ and $W$. Similarly, for function $g$, we can obtain that
	\begin{align}
		g(\eta)=&\left(\frac{\llangle v,Wv\rrangle}{\|W\|_{\tF}}\right)^{-(k/2-2)}-\frac{\eta(k-4)}{2}\left[\left.\left(f(x)\right)^{-(k/2-1)}f'(x)\right|_{x=0}\right]\notag
		\\
		&+\frac{\eta^2(k-4)(k-2)}{8}\left[\left.\left(f(x)\right)^{-k/2}\left(f'(x)\right)^2\right|_{x=\gamma\eta}\right]-\frac{\eta^2(k-4)}{4}\left[\left.\left(f(x)\right)^{-(k/2-1)}f''(x)\right|_{x=\gamma\eta}\right],\notag
	\end{align}
	where $\gamma\in[0,1]$ is also a scaling parameter dependent on $\eta$, $v$, $Q$ and $W$.
\end{proof}

\end{document}